\def\le {\leqslant}
\def\ge {\geqslant}
\providecommand{\U}[1]{\protect\rule{.1in}{.1in}}
\theoremstyle{plain}
\newtheorem{theorem}{Theorem}[section]
\newtheorem{lemma}[theorem]{Lemma}
\newtheorem{definition}[theorem]{Definition}
\newtheorem{remark}[theorem]{Remark}
\newtheorem{proposition}[theorem]{Proposition}
\numberwithin{equation}{section}
\newtheorem{corollary}[theorem]{Corollary}
\newtheorem{example}[theorem]{Example}
\begin{document}
\title[Sampling discretization  in Orlicz spaces]
{Sampling discretization  in Orlicz spaces}

\author{Egor Kosov}
\address{\noindent E. Kosov, Centre de Recerca Matem\`atica, Campus de Bellaterra, Edifici C 08193
Bellaterra (Barcelona), Spain. }
\email{kosoved09@gmail.com}

\author{Sergey Tikhonov}
\address{ S. Tikhonov \\
 ICREA, Pg.
Llu\'is Companys 23, 08010 Barcelona, Spain
\\
Centre de Recerca Matem\`atica
Campus de Bellaterra, Edifici~C, 08193 Bellaterra (Barcelona), Spain,
and Universitat Aut\`onoma de
Barcelona,
Facultat de Ci\`encies,  08193 Bellaterra (Barcelona), Spain.}
\email{stikhonov@crm.cat}

\subjclass[2010]{Primary  41A46, 46E30, Secondary 42A61 }
\keywords{Sampling discretization,
Orlicz norm,
Entropy numbers}

\begin{abstract}
We obtain new sampling discretization results in Orlicz norms on finite dimensional spaces.
As applications, we study sampling recovery problems, where the error of the recovery process is calculated with respect to different Orlicz norms.
In particular, we are interested in the recovery by linear methods in the norms close to $L^2$.
\end{abstract}

\maketitle


\section{Introduction}\label{sect1}
Let $\Omega\subset \mathbb{R}^d$ be a compact set and $\mu$ be a probability Borel measure on $\Omega$.
Let
$$
\|f\|_p:=\|f\|_{L^p(\mu)}:=\Bigl(\int_{\Omega}|f|^p\, d\mu\Bigr).
$$
For a continuous function $f\in C(\Omega),$ let
$
\|f\|_\infty:=\max\limits_{x\in \Omega}|f(x)|.
$

In this paper, we study the discretization problem for an
integral norm on $N$-dimensional subspaces
$X_N\subset C(\Omega)$, which can be formulated as follows:
{\it
How can we substitute the initial measure $\mu$ with a discrete measure $\nu$ supported on some 
  subset ${\bf x}:=\{x_1, \ldots, x_m\}\subset \Omega$ in such a way that the initial integral norm and the corresponding discrete norm are close to each other for elements from $X_N$?
}

The classical Marcinkiewicz discretization in $L^p$-norms
is a particular case of this problem.
We  say that, for a subspace $X_N$,  a Marcinkiewicz-type discretization theorem holds
with parameters $m\in\mathbb{N}$, $p\in[1, +\infty)$, and $C_2\ge C_1>0$ if
there is a set
${\bf x}:=\{x_1, \ldots, x_m\}\subset \Omega$ such that
$$
C_1\|f\|_p^p\le \frac{1}{m}\sum_{j=1}^{m}|f(x_j)|^p\le
C_2\|f\|_p^p\quad \forall f\in X_N.
$$
This problem goes back to the classical results of
Marcinkiewicz and Zygmund for trigonometric polynomials \cite{Zy},
but its comprehensive  study in the abstract framework has been started only very recently
(see, e.g., \cite{DKT}, \cite{DPSTT1}, \cite{DPSTT2}, \cite{Kos}, the surveys \cite{DPTT} and \cite{KKLT}, and the references therein).
The Marcinkiewicz discretization problem in an abstract framework appeared to
be related to the problem of tight embeddings of finite dimensional subspaces of $L^p[0, 1]$ into discrete
$\ell_p^m$
(see, e.g., \cite{BLM}, \cite{Sche87}, \cite{SZ01}, \cite{Ta2}, \cite{Ta3}, the survey \cite{JS82}, and the references therein).


\vskip .1in
\noindent
{\bf Marcinkiewicz-type  discretization for Orlicz norms.}
The main goal of this paper is
to study Marcinkiewicz-type sampling discretization for {\it integral Orlicz norms}
generated by a general ${\bf \Phi}$-function (see Definition \ref{Def01} below). As it is well known,
Orlicz spaces play a crucial role in various fields of analysis and PDEs.
There is a vast literature
on the general theory of Orlicz spaces;  we refer the reader to the classical monographs
\cite{KR60}, \cite{RR02}
and the recent book \cite{HH19}.

Our primary interest in the context of sampling discretization for Orlicz norms is related to the following observation.
It is known that, for $p\in[1, 2]$,
$O(N)$ points (up to some logarithmic  factor in $N$ for $p\in[1, 2)$)
are sufficient for an effective discretization
of $L^p$ norm on an $N$-dimensional subspace $X_N$, satisfying the Nikolskii-type inequality assumption
\begin{equation}\label{eq-Nik}
\|f\|_\infty\le \sqrt{BN}\|f\|_2\quad \forall f\in X_N.
\end{equation}
However, if $p>2$ and
$X_N$  satisfies \eqref{eq-Nik},
 one already needs at least $O(N^{p/2})$ points 
  (see {\bf D.20} in \cite{KKLT}).

Our first aim is to examine  the transition from the case $p=2$ to the case $p>2$ in terms of the  parameter $m$,
being the number of points sufficient for discretization.
One of our main results is that, for any $N$-dimensional subspace $X_N$ satisfying the assumption \eqref{eq-Nik},
$O(\Phi (\sqrt{N}))$ (up to logarithmic in $N$ factor) points are sufficient
for an effective discretization of the Orlicz norm, generated by a ${\bf \Phi}$-function $\Phi$ such that $ \Phi(t)t^{-2}$ is increasing.
In more detail, our first main result can be formulated as follows (see Theorems \ref{cor4.1} and \ref{t3.2} below).

\begin{theorem}\label{theo1.1}
Let $p\in [1, \infty)$, $\varepsilon\in(0, 1)$.
Let $\Phi$ be a ${\bf \Phi}$-function such that the function $t\mapsto \Phi(t)t^{-p}$
is increasing and assume that there is $q\in[p, \infty)$ such that the function $t\mapsto \Phi(t)t^{-q}$
is decreasing.
There is a constant $C_{\Phi, \varepsilon}>0$
depending only on $\Phi$ and $\varepsilon$ such that
for every $N$-dimensional subspace $X_N$, satisfying  condition \eqref{eq-Nik},
there exist
$$
m\le C_{\Phi, \varepsilon}\Phi\bigl((BN)^{\frac{1}{\min\{p, 2\}}}\bigr)(\log 2BN)^3
$$
and  a subset ${\bf x}:=\{x_1, \ldots, x_m\}\subset \Omega$
of cardinality $m$
such that
$$
(1-\varepsilon)\|f\|_\Phi\le \|f\|_{\Phi, {\bf x}}\le (1+\varepsilon)\|f\|_\Phi
\quad \forall f\in X_N.
$$
Here
$\|\cdot\|_\Phi$ is an integral Orlicz norm generated by the function $\Phi$
and $\|\cdot\|_{\Phi, {\bf x}}$ is its corresponding discrete counterpart for a discrete uniform measure
on the set ${\bf x}$.
\end{theorem}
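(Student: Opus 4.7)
The plan is to choose the sampling set by a probabilistic argument: draw $x_1,\dots,x_m$ i.i.d.\ from $\mu$ and show that with positive probability the empirical deviation
$$
\Delta_m(f,\lambda) := \frac{1}{m}\sum_{j=1}^m \Phi\!\left(\frac{|f(x_j)|}{\lambda}\right) - \int_\Omega \Phi\!\left(\frac{|f|}{\lambda}\right) d\mu
$$
is uniformly small over $f\in X_N$ and $\lambda$ in a suitable range. Since $\|f\|_\Phi=\inf\{\lambda>0:\int\Phi(|f|/\lambda)\,d\mu\le 1\}$, a uniform bound on $|\Delta_m|$ translates directly into a two-sided comparison between $\|f\|_\Phi$ and its discrete counterpart $\|f\|_{\Phi,\mathbf{x}}$. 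One can then normalize to $\lambda=\|f\|_\Phi$ and work on the Orlicz-unit ball of $X_N$.

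\textbf{Chaining reduction.} The core step is to bound $\mathbf{E}\sup_f|\Delta_m(f,\lambda)|$ by a chaining argument (Dudley's bound or generic chaining). The two-sided power comparisons forced by the hypotheses, namely $\Phi(at)\ge a^p\Phi(t)$ for $a\ge 1$ and $\Phi(at)\le a^q\Phi(t)$ for $a\ge 1$, supply Lipschitz estimates of the form $|\Phi(s)-\Phi(t)|\le C_\Phi(\max\{s,t\})^{q-1}|s-t|$ on bounded intervals, so the control of $\Delta_m$ reduces to estimates of covering numbers of the Orlicz-unit ball of $X_N$ in the $L^\infty$-metric on $\Omega$ (or on a sample).

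\textbf{Covering numbers via Nikolskii.} These covering numbers come from assumption \eqref{eq-Nik} combined with volume bounds on $X_N\cong\mathbb{R}^N$. When $p\ge 2$, the growth hypothesis gives $\|f\|_2\le C\|f\|_\Phi$, so the Orlicz ball sits inside a multiple of the $L^2$-ball; \eqref{eq-Nik} then promotes $L^2$ to $L^\infty$ at the cost of a factor $\sqrt{BN}$ in the radius, and the $L^\infty$-covering numbers follow by a volumetric Sudakov/Maurey-type argument. When $p<2$ the Orlicz norm dominates only $\|f\|_p$; here one interpolates between the Orlicz control of $\|f\|_p$ and the Nikolskii inequality for $\|f\|_\infty/\|f\|_2$ to obtain $\|f\|_\infty\le C(BN)^{1/p}\|f\|_\Phi$, which is exactly what produces the exponent $1/\min\{p,2\}$ in the conclusion. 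The resulting entropy integral, together with the uniform $L^\infty$-size $\Phi((BN)^{1/\min\{p,2\}})$ of the class $\{\Phi(|f|):\|f\|_\Phi\le 1\}$, yields an expected deviation that is small once $m$ is of the stated size.

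\textbf{Concentration and main obstacle.} One upgrades the expectation bound to a high-probability statement via Talagrand's concentration inequality for empirical processes (or a Bernstein-type bound); the needed variance bound is supplied by the same $L^\infty$-size of the class and the constraint $\int\Phi(|f|)\,d\mu\le 1$. The main difficulty is the non-homogeneity of $\Phi$: unlike an $L^p$-norm, one cannot reduce a priori to a single scaling $\lambda$, so the chaining must be applied uniformly over a logarithmic net of scales $\lambda\in[\Phi^{-1}(N^{-c}),\Phi^{-1}(N^{c})]$, producing one of the logarithmic factors in the final bound; the remaining logs come from the entropy integral and from the concentration step. The second, more subtle, point is the $p<2$ regime, where the Orlicz geometry and the $L^2$-based Nikolskii inequality do not match directly, forcing the interpolation step described above. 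Once these two issues are treated, the rest of the argument — chaining, covering estimates, and concentration — proceeds along lines that are by now standard in the Marcinkiewicz-type discretization literature.
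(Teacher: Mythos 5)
Your overall strategy (draw i.i.d.\ points, bound the empirical deviation of the modular by chaining, feed covering numbers of the $X_N$-ball through the Nikolskii inequality) is the same as the paper's, but there are several genuine gaps between your sketch and a working proof.

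\textbf{The net of scales is unnecessary, and your modular-to-norm transfer is asserted but not justified.} You flag the ``non-homogeneity'' of $\Phi$ as the main difficulty and propose chaining uniformly over a logarithmic net of scales $\lambda$. The paper avoids this entirely via Lemma~\ref{lem5}: a single bound on $\sup_{f\in X_N^\Phi}\bigl|\int\Phi(|f|)\,d\nu-\int\Phi(|f|)\,d\mu\bigr|$ over the open Orlicz unit ball already yields a two-sided comparison of the Luxemburg norms, thanks to the scaling inequalities coming from ${\rm(aInc)}_1$ and ${\rm(aDec)}_q$ (e.g.\ \eqref{eq-est}). This is precisely the step you wave through with ``a uniform bound on $|\Delta_m|$ translates directly into a two-sided comparison'' --- it does not translate directly, and the work needed to make it translate is what renders your net of $\lambda$'s superfluous. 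Moreover, if you normalize $f$ so that $\lambda=\|f\|_\Phi$, then $\lambda$ collapses to a single value, so it is unclear what your net of scales is supposed to index.

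\textbf{You chain on the wrong object.} You propose chaining on $f\mapsto\Phi(|f|/\lambda)$ directly. The paper instead applies Theorem~\ref{GMPT} (Gu\'edon--Mendelson--Pajor--Tomczak-Jaegermann) to the class $G=\{\sqrt{\Phi(|f|)}:f\in X_N^\Phi\}$. This ``square of a function class'' framework produces the crucial structure $c\bigl(A+A^{1/2}\bigl(\sup_g\int|g|^2\,d\mu\bigr)^{1/2}\bigr)$, where $\sup_g\int|g|^2\,d\mu=\sup\rho_\Phi(f)\le1$ on the unit ball, so the bound becomes $\approx A^{1/2}$ rather than $A$. A direct symmetrization-plus-chaining on $\Phi(|f|/\lambda)$ does not automatically give you this variance-adapted form; you would have to rederive it, and your sketch does not indicate awareness of the issue. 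Also note that $A$ is defined via $\gamma_2(G,\|\cdot\|_{\infty,\mathbf{x}})$ in the \emph{empirical} sup-norm on the sample, not the full $L^\infty(\Omega)$ norm, which is essential for getting $\sqrt{\log m}$ rather than $\sqrt{N}$ out of the Gaussian maximum; your ``on $\Omega$ (or on a sample)'' hedge leaves this unresolved.

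\textbf{The case $p\in[1,2)$ is not handled.} Your ``interpolation step'' to obtain $\|f\|_\infty\le (BN)^{1/p}\|f\|_\Phi$ matches the paper's computation, but the entropy estimate that actually drives the $p<2$ bound is Lemma 4.10 of~\cite{Kos}, namely $e_n(X_N^p,\|\cdot\|_{\infty,\mathbf{x}})\le C(p)H^{2/p}2^{-n/p}(\log 2H^2)^{1/p-1/2}(\log m)^{1/2}$. This is not a ``volumetric Sudakov/Maurey-type argument'' applied to an $L^\infty$ ball inclusion; it is a sharper estimate that your sketch does not supply. The endpoint $p=1$ is separate again: the paper first discretizes to a finite set using the $L^2$ theory, then passes to $L^1$-entropy via Lemma 3.3 of~\cite{DPSTT2}, a two-step argument you do not mention. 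Finally, you invoke Talagrand concentration where the paper needs only Chebyshev on the expectation bound; that is not a gap, just unnecessary machinery.
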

Our approach is probabilistic in nature and relies on Theorem 1.2 from \cite{GMPT07},
which is a straightforward  corollary of the Talagrand's generic chaining method.
We will also use basic Dudley's entropy bound (see estimate \eqref{Dudley} below) to estimate gamma functionals appearing
from the application of the mentioned result from \cite{GMPT07}.

\vskip .1in
\noindent
{\bf One-sided discretization results.}
The second part of the paper studies discretization for Orlicz norms on arbitrary subspaces $X_N$,
not necessarily satisfying  assumption  \eqref{eq-Nik}.
For an arbitrary  $X_N$, under some technical assumptions
on the function $\Phi$, which  generates the Orlicz norm,
we derive one-sided discretization results that bound the integral Orlicz norm by a suitable discrete norm.
We consider two distinct cases: one where this discrete norm is generated by an Orlicz functional close to the original one,
and another where this norm is simply a discrete $L^2$ norm.
In the latter case, we generalize the results from \cite{KPUU24} as follows (see Corollary \ref{T-1sided-1} below).

\begin{theorem}\label{in fact corollary5.10}
Let $\Phi$ be ${\bf \Phi}$-functions such that
$\Phi(t)t^{-p}$ is increasing for some $p\ge 2$.
There are positive numbers $c_1$ and   $c_2:=c_2(\Phi, p)$
such that
for every $N$-dimensional subspace $X_N\subset C(\Omega)$,
there is a set ${\bf x}:=\{x_1, \ldots, x_m\}\subset \Omega$
of cardinality $m\le c_1N$ for which
$$
\|f\|_\Phi\le c_2\Bigl(\frac{\Phi\bigl(\sqrt{N}\bigr)}{N}\Bigr)^{1/p}\Bigl(\frac{1}{m}\sum_{j=1}^{m}|f(x_j)|^2\Bigr)^{1/2}
\quad \forall f\in X_N.
$$
\end{theorem}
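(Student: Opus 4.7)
The plan is to reduce the Orlicz one-sided discretization to the $L^p$ result of \cite{KPUU24} via a pointwise Orlicz inequality and a short monotonicity argument. Specifically, for every $f\in X_N$ I would first establish
$$\|f\|_\Phi\le\frac{\|f\|_\infty}{\Phi^{-1}(\|f\|_\infty^2/\|f\|_2^2)},$$
and then invoke the construction of \cite{KPUU24} to produce points $x_1,\dots,x_m$ with $m\le c_1 N$ for which both $\|f\|_2^2\le C_1 S^2$ and $\|f\|_\infty^2\le C_2 N\,S^2$ hold simultaneously for all $f\in X_N$, where $S^2:=m^{-1}\sum_j|f(x_j)|^2$ and $C_1,C_2$ are absolute constants.

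For the pointwise inequality, the assumption that $\Phi(t)t^{-p}$ is increasing gives
$$\Phi(|f(x)|/\lambda)\le(|f(x)|/\|f\|_\infty)^p\,\Phi(\|f\|_\infty/\lambda)\qquad(x\in\Omega,\ \lambda>0).$$
Integrating against $\mu$ and using $\int|f|^p\,d\mu\le\|f\|_\infty^{p-2}\|f\|_2^2$ (valid for $p\ge 2$ on a probability measure) yields $\int\Phi(|f|/\lambda)\,d\mu\le\Phi(\|f\|_\infty/\lambda)(\|f\|_2/\|f\|_\infty)^2$, and requiring this to be $\le 1$ produces the displayed bound. Feeding the two sampling estimates into that bound, I analyse the function $F(t,v):=t/\Phi^{-1}(t^2/v^2)$. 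Monotonicity of $\Phi(t)t^{-p}$ implies $\Phi^{-1}(u_2)/\Phi^{-1}(u_1)\le(u_2/u_1)^{1/p}$ for $u_2\ge u_1$; a direct manipulation then gives $F(t_2,v)/F(t_1,v)\ge(t_2/t_1)^{1-2/p}\ge 1$ for $t_2\ge t_1$ and $p\ge 2$, so $F$ is non-decreasing in $t$. Moreover $F$ is increasing in $v$ since $\Phi^{-1}$ is increasing. Hence $F$ is maximized on the admissible rectangle at the corner $(\sqrt{C_2 N}\,S,\sqrt{C_1}\,S)$, whence
$$\|f\|_\Phi\le\frac{\sqrt{C_2 N}}{\Phi^{-1}(C_2 N/C_1)}\,S.$$

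The conversion to the stated prefactor $(\Phi(\sqrt N)/N)^{1/p}$ is elementary bookkeeping: the inequality $\Phi^{-1}(cu)\le c^{1/p}\Phi^{-1}(u)$ (for $c\ge 1$) together with concavity of $\Phi^{-1}$ (for $c\le 1$) rewrites the prefactor as a constant multiple of $\sqrt N/\Phi^{-1}(N)$; and since $p\ge 2$, monotonicity of $\Phi(t)t^{-p}$ forces $\Phi(\sqrt N)\ge N$ after a harmless normalization of $\Phi$, so $\sqrt N\ge\Phi^{-1}(N)$ and the same monotonicity yields $\sqrt N/\Phi^{-1}(N)\le(\Phi(\sqrt N)/N)^{1/p}$. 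The principal obstacle is not the Orlicz extension itself but the sampling step: producing a single unweighted sample of $O(N)$ points that delivers simultaneously the one-sided discrete $L^2$ and Nikolskii-type discrete $L^\infty$ inequalities for an arbitrary $N$-dimensional subspace $X_N\subset C(\Omega)$. Once that sampling is granted by \cite{KPUU24}, the generalization to arbitrary Orlicz $\Phi$ is essentially algebraic, following from the pointwise inequality and the monotonicity analysis above.
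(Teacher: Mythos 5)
Your proposal is correct and rests on the same two ingredients the paper uses: the sampling lemmas from \cite{KPUU24} (a one-sided discrete $L^2$ bound plus a Nikolskii-type discrete $L^\infty$ bound, each with $O(N)$ points, combined by taking a union of the two point sets) and the monotonicity of $t\mapsto\Phi(t)t^{-p}$ with $p\ge 2$. Where you differ from the paper is the organization of the second step. The paper bounds $\int_\Omega\Phi\bigl(c|f|\bigr)\,d\mu$ directly by writing $\Phi(c|f|)=\frac{\Phi(c|f|)}{(c|f|)^p}\,(c|f|)^{p-2}\,(c|f|)^2$ and controlling each factor by the two sampling bounds, thereby avoiding $\Phi^{-1}$ entirely. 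You instead prove the intermediate pointwise Orlicz--Nikolskii inequality $\|f\|_\Phi\le\|f\|_\infty/\Phi^{-1}\bigl(\|f\|_\infty^2/\|f\|_2^2\bigr)$ and then analyze the monotonicity of $F(t,v)=t/\Phi^{-1}(t^2/v^2)$. Your route isolates a clean and reusable inequality but pays for it with two extra pieces of bookkeeping: (i) you need $\Phi^{-1}$ to make sense, which under ${\rm(Inc)}_p$ alone means working with a generalized inverse (the paper sidesteps this entirely, and indeed states the result under the weaker ${\rm(aInc)}_p$); (ii) converting the constant $\sqrt{C_2 N}/\Phi^{-1}(C_2N/C_1)$ into $(\Phi(\sqrt N)/N)^{1/p}$ requires the normalization $\Phi(1)=1$, the dependence of which on $\Phi$ should be tracked into $c_2(\Phi,p)$.

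Two small blemishes worth flagging. First, you invoke "concavity of $\Phi^{-1}$" for the case $c\le1$; concavity is not available here (nothing in the hypotheses makes $\Phi$ convex). Fortunately it is also unnecessary: the same inequality $\Phi^{-1}(u_2)\le(u_2/u_1)^{1/p}\Phi^{-1}(u_1)$ for $u_2\ge u_1$, read with $u_1=cu$ and $u_2=u$, yields $\Phi^{-1}(cu)\ge c^{1/p}\Phi^{-1}(u)$, which is exactly what you need. Second, the "harmless normalization" deserves one more sentence: replacing $\Phi$ by $\Phi/\Phi(1)$ changes both $\|\cdot\|_\Phi$ and the prefactor $(\Phi(\sqrt N)/N)^{1/p}$ by factors bounded in terms of $\Phi(1)$ alone (using $\Phi\in{\rm(aInc)}_1$), so the change is absorbed into $c_2(\Phi,p)$; without this observation the claim $\Phi(\sqrt N)\ge N$ does not follow from ${\rm(Inc)}_p$ alone.
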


This result and its more general version given in Theorem \ref{T-1sided} allow
for the  study of one-sided discretization in spaces close to $L^2$ and $L^\infty$.
As model cases,  we consider
$\displaystyle\Phi_{p, \alpha, \beta}(t)= t^p\frac{(\ln(e+t))^\alpha}{(\ln(e+t^{-1}))^\beta}$, $p\ge 1$, $\alpha, \beta\ge 0$
and $\Phi_q(t):= e^{t^q} - 1$. 
Then our results state that
there exists a set ${\bf x}:=\{x_1, \ldots, x_m\}\subset \Omega$ of cardinality
$$
m\le c_1N
$$
such that
$$
\|f\|_{\Phi_{p, \alpha, \beta}}\le c_2N^{\frac{1}{2}-\frac{1}{p}}(\log 4N)^{\frac{\alpha}{p}}
\Bigl(\frac{1}{m}\sum_{j=1}^{m}|f(x_j)|^2\Bigr)^{1/2} \quad \forall f\in X_N, \qquad p\ge 2.
$$
Similarly,
$$
\|f\|_{\Phi_q}\le c_2\frac{N^{1/2}}{(\log(N+1))^{1/q}} \Bigl(\frac{1}{m}\sum_{j=1}^{m}|f(x_j)|^2\Bigr)^{1/2}\quad \forall f\in X_N, \qquad q\ge 2.
$$
See Examples \ref{cor5.1} and \ref{cor5.2}.

\vskip .1in
\noindent
{\bf Sampling recovery by linear and non-linear methods.}
It has been recently discovered  that general one-sided discretization results as Theorem \ref{in fact corollary5.10} above
are linked with the problem of optimal sampling recovery (see \cite{DT22}, \cite{KPUU24}, \cite{LMT24}, \cite{Tem21}).
The problem of reconstruction of an unknown function $f$
defined on a domain $\Omega\subset \mathbb{R}^d$
from its samples at a finite set of points
${\bf x}=\{x_1, \ldots, x_m\}$ is an important problem of modern approximation theory.
We refer the interested reader to the following textbooks for the exposition of known results in the field:
\cite{Nov88}, \cite{TemBook}, \cite{TWW88}.

Let us recall the sampling recovery setting.
Let ${\bf F}\subset C(\Omega)$ and let us fix a
norm $\|\cdot\|$.
The sampling numbers of a function class ${\bf F}$ are given by
$$
\varrho_m({\bf F}, \|\cdot\|):=
\inf_{\substack{{\bf x}\subset \Omega\\ \# {\bf x}\le m}}
\inf_{ T_{{\bf x}} - \hbox{ linear}}
\sup\limits_{f\in {\bf F}}
\|f-T_{{\bf x}}(f(x_1), \ldots, f(x_m))\|,
$$
that is, the sampling numbers correspond to the uniformly optimal recovery of
a function from ${\bf F}$ by its sample in $m$ fixed points by linear methods of reconstruction.
Similarly, we define the modified sampling numbers
$$
\varrho_m^*({\bf F}, \|\cdot\|):=
\inf_{\substack{{\bf x}\subset \Omega\\ \# {\bf x}\le m}}
\inf_{X_N, N\le m}
\inf_{T_{{\bf x}}\colon \mathbb{C}^m\to X_N}
\sup\limits_{f\in {\bf F}}
\|f-T_{{\bf x}}(f(x_1), \ldots, f(x_m))\|.
$$
In other words,
considering $\varrho_m^*({\bf F}, \|\cdot\|)$, we also allow non-linear methods of reconstruction.
The behavior of sampling numbers
$\{\varrho_m({\bf F}, \|\cdot\|)\}_{m=1}^\infty$ and $\{\varrho_m^*({\bf F}, \|\cdot\|)\}_{m=1}^\infty$
are  well studied 
in the $L^2$-case, that is, when $\|\cdot\|=\|\cdot\|_{L^2(\mu)}$
(see \cite{DKU23}, \cite{JUV}, \cite{KSUW23}, \cite{KU21}, \cite{KU21-2}, \cite{NSU22}, \cite{Tem21},
\cite{WW01}).
In particular,
it was shown  that sampling numbers admit
sharp upper bounds in terms of the Kolmogorov widths.
More than that,  there are two positive constants $B$ and $b$ such that
for any compact subset ${\bf F}$ of $C(\Omega)$ one always has
\begin{equation}\label{eq-Tem}
\rho_{bN}({\bf F}, \|\cdot\|_{L^2(\mu)})\le
Bd_N({\bf F}, \|\cdot\|_\infty),
\end{equation}
where $\{d_N({\bf F}, \|\cdot\|_\infty)\}_{N=1}^\infty$ are the Kolmogorov
widths with respect to the uniform norm (see \cite{Tem21}).

The problem of recovery in $L^p$-norm, $p>2$, is far less studied.
Some general bounds for the sampling numbers
in the $L^p$-case, in terms of the classical Kolmogorov widths, have been obtained
in the recent paper \cite{KPUU24}. Apart from this result,
we  can only mention known results for modified sampling numbers (see, e.g., {\bf R.3} in \cite{KKLT})
and bounds involving some counterparts of the classical Kolmogorov widths (see Section 4 in \cite{DT22}).


Our goal here is to investigate the problem of sampling recovery in intermediate cases, specifically,
for the spaces that lie between $L^2$ and $L^p$,  $p>2$. In more detail, applying new
 discretization results
we derive new bounds for the sampling numbers in Orlicz norms.
These estimates generalize those from \cite{KPUU24} and recover \eqref{eq-Tem}.
Namely, we obtain the following result (see Theorem \ref{t7.1} below).
\begin{theorem}
Let $\Phi$ be a ${\bf \Phi}$-function such that
$\Phi(t)t^{-p}$ is increasing for some $p\ge 2$.
There exist a positive numerical constant $c\ge1$ and a number $C(\Phi, p)\ge 1$,
depending only on $\Phi$ and $p$, such that,
for any probability Borel measure $\mu$ on $\Omega$ and
for any function class ${\bf F}\subset C(\Omega)$,
one has
$$
\varrho_{cN}({\bf F}, \|\cdot\|_{L^\Phi(\mu)})
\le C(\Phi, p)\Bigl(\frac{\Phi\bigl(\sqrt{N}\bigr)}{N}\Bigr)^{1/p}d_N({\bf F}, \|\cdot\|_\infty).
$$
\end{theorem}
As above, we consider two model examples of Orlicz functions $\displaystyle\Phi_{p, \alpha, \beta}(\cdot)$
and $\Phi_q(\cdot)$. For them, our general results imply the estimates
$$
\varrho_{cN}({\bf F}, \|\cdot\|_{L^{\Phi_{p, \alpha, \beta}}(\mu)})
\le CN^{\frac{1}{2} - \frac{1}{p}}(\log 4N)^{\alpha/p}d_N({\bf F}, \|\cdot\|_\infty), \qquad p\ge 2,
$$
$$
\varrho_{cN}({\bf F}, \|\cdot\|_{L^{\Phi_q}(\mu)})
\le C\frac{N^{1/2}}{(\log(N+1))^{1/q}}d_N({\bf F}, \|\cdot\|_\infty), \qquad q\ge 2,
$$
where ${\bf F}\subset C(\Omega)$ is an arbitrary function class. See Examples \ref{ex7.3} and \ref{ex7.1} below.



\vskip .1in
\noindent
{\bf Structure of the paper and notation.}
The paper is organized as follows. In Section \ref{sect2}, we discuss the definitions and properties of
$\Phi$-functions and Orlicz spaces. Moreover, we emphasize  the importance of the Nikolskii condition and chaining technique in addressing  the discretization problem.
Section \ref{sect3} is devoted to the proof of Theorem~\ref{theo1.1}. Section \ref{sect4} studies possible relaxations of the conditions on
the ${\bf \Phi}$-function in Theorem~\ref{theo1.1} under which suitable discretization results are still valid.
Section \ref{sect6} is devoted to the study of unconditional (in terms of subspace)
discretization theorems. In their turn, these results are  used in Section \ref{sect7}
to provide new bounds for the sampling numbers.

Throughout the paper symbols $C, C_0, C_1, \ldots$ and $c, c_0, c_1, \ldots$
denote universal numerical constant, the value of which may vary from line to line. Similarly,
symbols $C(\alpha), C_0(\alpha), C_1(\alpha), \ldots$ and $c(\alpha), c_0(\alpha), c_1(\alpha), \ldots$
denote constants  that may depend only on the set of parameters $\alpha$ and the value of which may also vary from line to line. By $\log x$
we always denote the logarithm to the base $2$,
i.e., $\log x:=\log_2 x$.
Finally, we will intensively use the notation $X_N^\Phi$ to denote the unit (open) ball in the subspace $X_N$ with respect to the norm $\|\cdot\|_\Phi$, defined as
$$
X_N^\Phi:=\{f\in X_N\colon \|f\|_\Phi< 1\}
$$
and, in the case of the unit ball with respect to the $L^p$-norm,
$$
X_N^{p} :=\{f\in X_N\colon \|f\|_{p}< 1\}.
$$

\vspace{0.2cm}
\section{Preliminaries: Orlicz space and chaining bounds}\label{sect2}

\subsection{$\Phi$-functions}\label{ssect2-1}
We are partially following the notation from \cite{HH19}. The following definition was introduced by Bernstein in \cite{bernstein1949}; see also  \cite[Def. 2.1.1]{HH19}.

\begin{definition}
Let $t_*\in\mathbb{R}$.
A function $\varphi\colon (t_*, \infty)\to \mathbb{R}$ is called almost increasing (respectively, decreasing) on $(t_*, \infty)$ with the constant
$a\ge 1$
if $\varphi(s)\le a\varphi(t)$ ($\varphi(t)\le a\varphi(s)$) for all $t_*<s<t$.
If $a=1$, then $\varphi$ is called increasing (decreasing) on $(t_*, \infty)$.
\end{definition}

\begin{remark}
{\rm
We note that $\varphi\colon (t_*, \infty)\to \mathbb{R}$ is almost increasing (respectively, decreasing)
if and only if there is an increasing (decreasing) function $\tilde{\varphi}$ and a constant $a\ge 1$
such that
\begin{equation}\label{tilde-phi}
\tilde{\varphi}(t)\le \varphi(t)\le a\tilde{\varphi}(t)\quad \forall t\in (t_*, \infty).
\end{equation}
It is enough to take $\tilde{\varphi}(t):=a^{-1}\sup\limits_{t_*< s\le t}\varphi(s)$ if $\varphi$ is almost increasing  and
$\tilde{\varphi}(t):=\inf\limits_{t_*< s\le t}\varphi(t)$ for almost decreasing $\varphi$.
Moreover, for $t_*<s<t$, condition \eqref{tilde-phi} implies $\varphi(s)\le a\varphi(t)$ if $\tilde{\varphi}$ is increasing
 and
$\varphi(t)\le  a\varphi(s)$ if $\tilde{\varphi}$ is decreasing.
}
\end{remark}

\begin{definition}
Let $\Phi\colon(0, +\infty)\to \mathbb{R}$ and $p, q\ge1$.
We write

$(i)$ $\Phi\in {\rm(Inc)}_p(\infty)$ $($respectively, $\Phi\in {\rm(Inc)}_p)$
if $t\mapsto \Phi(t)t^{-p}$ is increasing on
$(t_*, \infty)$ for some $t_*\ge 0$ $($on $(0, \infty))$;

$(ii)$ $\Phi\in {\rm (aInc)}_p(\infty)$
$($respectively, $\Phi\in {\rm (aInc)}_p)$ if $t\mapsto \Phi(t)t^{-p}$ is almost increasing with some constant $a_\Phi(p)$
on
$(t_*, \infty)$ for some $t_*\ge 0$ $($on $(0, \infty))$;

$(iii)$ $\Phi\in {\rm(Dec)}_q(\infty)$
$($respectively, $\Phi\in {\rm(Dec)}_q)$ if $t\mapsto \Phi(t)t^{-q}$ is decreasing
on
$(t_*, \infty)$ for some $t_*\ge 0$ $($on $(0, \infty))$;

$(iv)$ $\Phi\in {\rm(aDec)}_q(\infty)$
$($respectively, $\Phi\in {\rm(aDec)}_q)$ if $t\mapsto \Phi(t)t^{-q}$ is almost decreasing with some constant $b_\Phi(q)$
on
$(t_*, \infty)$ for some $t_*\ge 0$ $($on $(0, \infty))$.
\end{definition}

Set
\begin{eqnarray*}
&&{\rm(Dec)} := \bigcup\limits_{q\ge 1} {\rm(Dec)}_q,\qquad\qquad\;\,\quad
{\rm(aDec)} := \bigcup_{q\ge 1} {\rm(aDec)}_q,
\\
&&{\rm(Dec)}(\infty) := \bigcup_{q\ge 1} {\rm(Dec)}_q(\infty),\qquad
{\rm(aDec)}(\infty) := \bigcup_{q\ge 1} {\rm(aDec)}_q(\infty).
\end{eqnarray*}
The defined function classes are closely related to the concept of regular variation, see the monograph \cite{Bingham}
for more detail.

\begin{remark}\label{rem-diff}
{\rm
For a differentiable function $\Phi\colon (0, +\infty)\to (0, +\infty)$,
the condition $\Phi\in{\rm(Inc)}_p$ (or ${\rm(Dec)}_q$,  ${\rm(Inc)}_p(\infty)$, ${\rm(Dec)}_q(\infty)$)
can be simply verified by studying the function
$\varphi(t):=\frac{t\Phi'(t)}{\Phi(t)}$.
Indeed, since
$
(\Phi(t)t^{-p})' =
\Phi(t)t^{-p-1}\bigl(\varphi(t)-p\bigr)$ for $p>0$,
the condition  $\inf\limits_{t>t_*}\varphi(t)\ge p$ yields that
$t\mapsto \Phi(t)t^{-p}$ is increasing on $(t_*, \infty)$.
Similarly, if $\sup\limits_{t>t_*}\varphi(t)\le q$ then
$t\mapsto \Phi(t)t^{-q}$ is decreasing on $(t_*, \infty)$.
}
\end{remark}

\begin{definition}
[see  \cite{HH19}]\label{Def01}
Let $\Phi\colon [0, +\infty)\to [0, +\infty)$
be an increasing function such that $\Phi(0)=0$,
$\lim\limits_{t\to+0}\Phi(t) = 0$, and $\lim\limits_{t\to +\infty}\Phi(t)=+\infty$. Such $\Phi$ is called a ${\bf \Phi}$-prefunction.
\\
A ${\bf \Phi}$-prefunction $\Phi$ is called

\begin{itemize}
  \item[$(i)$] ${\bf \Phi}$-function if $\Phi\in {\rm (aInc)}_1$
(written as $\Phi\in {\bf \Phi}_w$);
  \item[$(ii)$] convex ${\bf \Phi}$-function if it is a convex function
(written as $\Phi\in {\bf \Phi}_c$).
\end{itemize}
\end{definition}

We point out that, for $\Phi\in {\rm (aInc)}_p$ and  $\lambda\ge 1$,
we have
\begin{equation}\label{eq-est-p}
\Phi\bigl(a_\Phi(p)^{-1/p}\lambda^{-1/p}t\bigr)\le \lambda^{-1}\Phi(t), \qquad t\in[0, +\infty).
\end{equation}
In particular,
\begin{equation}\label{eq-est}
\Phi\bigl(a_\Phi(1)^{-1}\lambda^{-1}t\bigr)\le \lambda^{-1}\Phi(t), \qquad t\in[0, +\infty)
\end{equation}
for
$\Phi\in {\bf \Phi}_w$ and  $\lambda\ge 1$.

For a ${\bf \Phi}$-function $\Phi\in {\rm (aInc)}_p$,  set
\begin{equation}\label{eq-R}
R_\Phi(p):=\Bigl(1+\frac{a_\Phi(p)}{\Phi(1)}\Bigr)^{1/p}.
\end{equation}


We say that two functions $\Phi, \Psi\colon [0, \infty)\to [0, \infty)$
are equivalent (and write $\Phi\simeq \Psi$) if
there is a number $L\ge 1$ such that
$$
\Phi(t/L)\le \Psi(t)\le \Phi(Lt)\quad \forall t\ge0.
$$
It worth noting that the classes
${\rm(aInc)}_p$ and ${\rm(aDec)}_q$ are invariant
under the equivalence of ${\bf \Phi}$-functions
(see \cite[L. 2.1.9]{HH19}).
Another important observation regarding a weak ${\bf \Phi}$-function
 is that it can always be  upgraded to a convex ${\bf \Phi}$-function (see \cite[L. 2.2.1]{HH19}).

\begin{lemma}
\label{lem-equiv}
Let $p\in[1, \infty)$.
If $\Phi\in {\bf \Phi}_w$ and $\Phi\in {\rm (aInc)}_p$,
then there exists $\Psi\in {\bf \Phi}_c$ equivalent to $\Phi$
such that $\Psi^{1/p}$ is convex.
In particular, $\Psi\in {\rm (Inc)}_p$. 
\end{lemma}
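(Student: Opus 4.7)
The plan is to reduce the problem to the case $p=1$ by passing to $\varphi := \Phi^{1/p}$, construct an equivalent convex function $\psi$ via an integral formula, and finally set $\Psi := \psi^p$. Since $\Phi \in {\rm (aInc)}_p$ means that $t \mapsto \Phi(t)/t^p$ is almost increasing with constant $a_\Phi(p)$, the identity $\varphi(t)/t = (\Phi(t)/t^p)^{1/p}$ shows that $\varphi \in {\rm (aInc)}_1$ with constant $a_\Phi(p)^{1/p}$, reducing the construction to the standard convexification used in \cite[L. 2.2.1]{HH19}.

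For the convexification step, I would set
$$
h(s) := \sup_{0 < u \le s} \frac{\varphi(u)}{u}, \qquad \psi(t) := \int_0^t h(s)\,\diff s.
$$
By construction $h$ is increasing and satisfies $\varphi(s)/s \le h(s) \le a_\Phi(p)^{1/p}\varphi(s)/s$. Consequently $\psi$ is convex with $\psi(0)=0$, and the elementary chain
$$
\varphi(t/2) \le (t/2)\,h(t/2) \le \int_{t/2}^t h(s)\,\diff s \le \psi(t) \le t\,h(t) \le a_\Phi(p)^{1/p}\varphi(t)
$$
shows $\psi \simeq \varphi$ with an equivalence constant depending only on $a_\Phi(p)$.

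Setting $\Psi := \psi^p$, convexity of $\Psi$ is immediate from the composition rule for the convex nondecreasing map $t\mapsto t^p$ on $[0,\infty)$ and the convex nonnegative function $\psi$; the ${\bf \Phi}$-prefunction properties (value zero at zero and the correct limits) are inherited from $\psi$, hence from $\Phi$; and $\Psi \simeq \Phi$ follows from $\psi \simeq \varphi$ by raising to the $p$-th power. By construction $\Psi^{1/p}=\psi$ is convex. The \emph{In particular} clause $\Psi \in {\rm (Inc)}_p$ is then automatic: convexity of $\psi$ together with $\psi(0)=0$ forces $t\mapsto \psi(t)/t$ to be increasing, so that $\Psi(t)/t^p = (\psi(t)/t)^p$ is increasing as well.

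The only nontrivial point is bookkeeping: to rewrite the equivalence $\Psi \simeq \Phi$ in the required form $\Phi(t/L) \le \Psi(t) \le \Phi(Lt)$ with $L=L(\Phi,p)$, one converts the multiplicative inequality $\Psi(t) \le a_\Phi(p)\Phi(t)$ into a dilation of the argument via the ${\rm (aInc)}_p$ condition, which gives $\Phi(Lt) \ge a_\Phi(p)^{-1}L^p\Phi(t)$ and hence permits the choice $L=\max(2, a_\Phi(p)^{2/p})$. Beyond this purely technical point, the argument is the classical Orlicz-space convexification trick, upgraded only in that the integrand $h(s) = \sup_{u\le s} \varphi(u)/u$ is chosen to preserve the stronger property $\Psi^{1/p}\in {\rm (Inc)}_1$ rather than merely $\Psi\in{\rm (Inc)}_1$.
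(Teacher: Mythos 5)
Your proof is correct. Note that the paper itself does not supply a proof of Lemma \ref{lem-equiv}; it simply cites \cite[L.\,2.2.1]{HH19}, so there is no ``paper's own argument'' to compare against in detail. Your argument is exactly the standard convexification that underlies that reference, combined with the natural reduction to $p=1$: pass to $\varphi=\Phi^{1/p}\in{\rm(aInc)}_1$, replace $\varphi(s)/s$ by its running supremum $h(s)$, integrate to obtain a convex $\psi$ with $\varphi(t/2)\le\psi(t)\le a_\Phi(p)^{1/p}\varphi(t)$, and set $\Psi=\psi^p$. All the small points you flag are handled correctly: $h$ is finite and locally bounded because $\varphi\in{\rm(aInc)}_1$ (so $\psi$ is well-defined with $\psi(0)=0$ and $\psi(t)\to\infty$); $\psi^p$ is convex because $t\mapsto t^p$ is convex and nondecreasing on $[0,\infty)$ for $p\ge1$; the ``in particular'' clause follows because a convex function vanishing at the origin has an increasing ratio $\psi(t)/t$, hence $\Psi(t)/t^p=(\psi(t)/t)^p$ is increasing; and the conversion of the multiplicative bound $\Psi\le a_\Phi(p)\Phi$ into a dilation $\Psi(t)\le\Phi(Lt)$ via the ${\rm(aInc)}_p$ property is exactly right. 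Nothing is missing.
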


\subsection{Orlicz space}\label{ssect2-2}
Let $\mu$ be a probability measure on some set $\Omega$.
For $\Phi\in {\bf \Phi}_w$ we set
$$
\rho_\Phi(f):= \int_{\Omega}\Phi(|f(x)|)\, \mu(dx).
$$
Define
$$
L^\Phi(\mu):=\{f\colon \exists \lambda>0\colon \rho_\Phi(f/\lambda)<\infty\}.
$$
and, for $f\in L^\Phi(\mu)$, let the Luxembourg functional be given by
$$
\|f\|_{L^\Phi(\mu)}=\|f\|_\Phi:=
\inf\{\lambda>0\colon \rho_\Phi(f/\lambda)\le 1\}.
$$
It is known (see \cite[Ch. 3]{HH19})
that $\|\cdot\|_\Phi$ is a norm if $\Phi\in {\bf \Phi}_c$
and $\|\cdot\|_\Phi$ is a quasinorm if $\Phi\in {\bf \Phi}_w$, that is,
\begin{equation}\label{eq-triangle}
\|f+g\|_\Phi\le C_\Phi(\|f\|_\Phi + \|g\|_\Phi),\qquad C_\Phi>0,
\end{equation}
for every $f, g\in L^\Phi(\mu)$.
In particular, $\|\cdot\|_\Phi$
always is a positively homogeneous functional.
Moreover,
 for any $\Phi\in {\bf \Phi}_w$ one has
\begin{equation}\label{eq-ball}
\|f\|_\Phi<1\Rightarrow \rho_\Phi(f)\le 1
\Rightarrow \|f\|_\Phi\le 1.
\end{equation}


In the discrete setting, i.e., when $\Omega:={\bf x}=\{x_1, \ldots, x_m\}$
and $\mu=\sum\limits_{j=1}^{m}\lambda_j\delta_{x_j}$ for some set of weights $\lambda:=\{\lambda_1, \ldots, \lambda_m\}$,
we will use the notation
$$
\|f\|_{\Phi, {\bf x}, \lambda}:= \|f\|_{L^\Phi(\sum\limits_{j=1}^{m}\lambda_j\delta_{x_j})}.
$$
In the case $\lambda_1=\ldots=\lambda_m$, we simply write
$\|f\|_{\Phi, {\bf x}}$.
For $L^p(\mu)$-norms, $p\in[1, \infty)$, as usual, we define
$$
\|f\|_{L^p(\mu)}=\|f\|_p:=\Bigl(\int_{\Omega}|f|^p\, d\mu\Bigr)^{1/p}.
$$

\begin{lemma}\label{lem0}
Let $q\in[1, +\infty)$ and let $\Phi$ be a ${\bf \Phi}$-function such that $\Phi\in {\rm(Dec)}_q$.
Then
$$
|\Phi(u) - \Phi(v)|\le q |u - v|
\Bigl(\frac{\Phi(u)}{u} + \frac{\Phi(v)}{v}\Bigr)
\quad \forall u, v\in[0, +\infty).
$$
\end{lemma}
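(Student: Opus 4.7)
The plan is to reduce to the case $0 < u \le v$ and use the ${\rm(Dec)}_q$ condition in the multiplicative form $\Phi(u) \ge \Phi(v)(u/v)^q$ together with the elementary bound $1 - r^q \le q(1 - r)$ valid for $r \in [0, 1]$ and $q \ge 1$.

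Since $\Phi$ is a ${\bf \Phi}$-prefunction in the paper's conventions (increasing with $\Phi(0) = 0$) and the right-hand side is symmetric in $u$ and $v$, I may assume $u \le v$, in which case $|\Phi(u) - \Phi(v)| = \Phi(v) - \Phi(u)$. The degenerate cases are handled separately: if $\Phi(v) = 0$, then by monotonicity $\Phi(u) = 0$ as well, so both sides vanish; and $u = 0$ reduces (by letting $u \to 0^+$, or directly) to $\Phi(v) \le q\Phi(v)$, which holds since $q \ge 1$. Hence it suffices to treat $0 < u \le v$ with $\Phi(v) > 0$.

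Now the ${\rm(Dec)}_q$ property $\Phi(v) v^{-q} \le \Phi(u) u^{-q}$ rewritten as $\Phi(u)/\Phi(v) \ge (u/v)^q$ yields
$$ \Phi(v) - \Phi(u) \le \Phi(v)\bigl[1 - (u/v)^q\bigr] \le q\Phi(v)\bigl(1 - u/v\bigr) = q(v - u)\frac{\Phi(v)}{v}, $$
where the second inequality is the routine bound $1 - r^q \le q(1 - r)$ on $[0, 1]$ (immediate from $1 - r^q = \int_r^1 q s^{q - 1}\, ds$ and $s^{q - 1} \le 1$). Since $\Phi(u)/u \ge 0$, the right-hand side is dominated by $q(v - u)\bigl(\Phi(u)/u + \Phi(v)/v\bigr)$, which is the required inequality.

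No step is genuinely delicate: the statement is essentially an additive, symmetric Lipschitz-type reformulation of the multiplicative bound $\Phi(v)/\Phi(u) \le (v/u)^q$ encoded in ${\rm(Dec)}_q$, and the symmetric form of the right-hand side is exactly what lets one drop any ordering assumption on $u$ and $v$. The only minor care needed is the verification that one can indeed restrict to $0 < u \le v$ with $\Phi(v) > 0$, which is immediate from the structure of ${\bf \Phi}$-prefunctions.
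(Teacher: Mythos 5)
Your proof is correct and follows essentially the same route as the paper's: in both cases one reduces to the ordered situation, applies the ${\rm(Dec)}_q$ condition to replace the ratio $\Phi(u)/\Phi(v)$ by $(u/v)^q$, and then invokes the elementary bound $1-r^q\le q(1-r)$ (equivalently, $\frac{u^q-v^q}{u-v}\le q u^{q-1}$) to land on the one-sided estimate $\Phi(v)-\Phi(u)\le q(v-u)\Phi(v)/v$, from which the stated symmetric inequality is immediate. The only cosmetic difference is that you spell out the degenerate cases $u=0$ and $\Phi(v)=0$, which the paper leaves implicit.
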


\begin{proof}
Assume that $u>v$.
Then
$$
\frac{\Phi(u) - \Phi(v)}{u-v} =
\Phi(u)u^{-q}\frac{u^q}{u-v} - \Phi(v)v^{-q}\frac{v^q}{u-v}
\le\Phi(u)u^{-q}\frac{u^q-v^q}{u-v}
\le
q\Phi(u)u^{-1},
$$
which implies the required estimate.
\end{proof}

\begin{lemma}\label{lem0.5}
Let $p\in[1, +\infty)$ and let $\Phi$ be a ${\bf \Phi}$-function such that $\Phi\in {\rm (aInc)}_p$.
Then, for any function $h\in C(\Omega)$, one has
$$
\|h\|_{L^p(\mu)}\le R_\Phi(p)\|h\|_{L^\Phi(\mu)},
$$
where $R_\Phi(p):=\bigl(1+\frac{a_\Phi(p)}{\Phi(1)}\bigr)^{1/p}$, cf.  \eqref{eq-R}.
\end{lemma}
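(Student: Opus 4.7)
The plan is to prove the inequality by homogeneity, reducing to the case where we can work with a scalar $\lambda$ slightly larger than $\|h\|_\Phi$. If $\|h\|_\Phi=0$ then $h=0$ $\mu$-almost everywhere (since $\Phi$ is increasing with $\Phi(t)>0$ for $t>0$), so both sides vanish. Hence I may assume $\|h\|_\Phi>0$ and fix any $\lambda>\|h\|_\Phi$. By the definition of the Luxembourg functional (and monotonicity of $\Phi$) we then have $\rho_\Phi(h/\lambda)\le 1$.

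The key analytic input is to extract a pointwise comparison between $t^p$ and $\Phi(t)$ for $t\ge 1$ from the hypothesis $\Phi\in{\rm(aInc)}_p$. By definition, $s\mapsto\Phi(s)s^{-p}$ is almost increasing on $(0,\infty)$ with constant $a_\Phi(p)$; applied at $s=1$ and $t\ge 1$ this yields
$$
\Phi(1)\le a_\Phi(p)\,\Phi(t)\,t^{-p},\qquad\text{i.e.}\qquad t^p\le\frac{a_\Phi(p)}{\Phi(1)}\,\Phi(t)\qquad (t\ge1).
$$
For $t\le 1$ the trivial bound $t^p\le 1$ suffices.

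Next I would split the integral defining $\|h\|_p^p$ according to whether $|h(x)|/\lambda\le 1$ or $|h(x)|/\lambda>1$:
$$
\lambda^{-p}\|h\|_p^p=\int_{\{|h|\le\lambda\}}\Bigl|\frac{h}{\lambda}\Bigr|^p d\mu+\int_{\{|h|>\lambda\}}\Bigl|\frac{h}{\lambda}\Bigr|^p d\mu.
$$
The first integral is bounded by $\mu(\Omega)=1$. For the second, the pointwise inequality above gives $|h/\lambda|^p\le\frac{a_\Phi(p)}{\Phi(1)}\Phi(|h|/\lambda)$ on the set in question, so this integral is at most $\frac{a_\Phi(p)}{\Phi(1)}\rho_\Phi(h/\lambda)\le\frac{a_\Phi(p)}{\Phi(1)}$. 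Combining the two estimates yields
$$
\|h\|_p^p\le\lambda^p\Bigl(1+\frac{a_\Phi(p)}{\Phi(1)}\Bigr)=\lambda^p R_\Phi(p)^p,
$$
and letting $\lambda\downarrow\|h\|_\Phi$ finishes the proof.

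There is no real obstacle here; the only point to be careful about is that the hypothesis is $\Phi\in{\rm(aInc)}_p$ on all of $(0,\infty)$, not only on some $(t_*,\infty)$, which is precisely what licenses evaluating the almost-increasing relation at $s=1$ for arbitrary $t\ge 1$. If the assumption were merely ${\rm(aInc)}_p(\infty)$, one would need an additional argument on $\{|h|/\lambda\in[1,t_*]\}$, but under the stated hypothesis the above two-region split is clean and delivers the constant $R_\Phi(p)$ in the desired form.
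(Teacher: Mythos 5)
Your proof is correct and follows essentially the same approach as the paper: you extract the pointwise bound $t^p\le\frac{a_\Phi(p)}{\Phi(1)}\Phi(t)$ for $t\ge 1$ from ${\rm(aInc)}_p$, split the $L^p$ integral according to whether the (normalized) function exceeds $1$, and combine with $\rho_\Phi\le1$. The only cosmetic differences are that the paper normalizes to $\|h\|_\Phi<1$ directly while you carry the auxiliary $\lambda>\|h\|_\Phi$ and pass to the limit, and that your separate treatment of $\|h\|_\Phi=0$ is actually unnecessary (taking $\lambda\downarrow0$ in your final bound already gives $\|h\|_p=0$; moreover, the parenthetical claim that $\Phi(t)>0$ for all $t>0$ is not forced by the hypotheses, though the conclusion still holds by a Fatou argument).
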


\begin{proof}
Let $h\in C(\Omega)$ with $\|h\|_{L^\Phi(\mu)}<1$.
We note that
$a_\Phi(p)\Phi(t)\ge \Phi(1)t^p$ for $t\ge 1$. Then
there holds
$$
\int_{\Omega}|h|^p\, d\mu\le
\mu(|h|\le 1) + \frac{a_\Phi(p)}{\Phi(1)}
\int_{|h|\ge1}\Phi(|h|)\, d\mu
\le 1+\frac{a_\Phi(p)}{\Phi(1)}= \bigl(R_\Phi(p)\bigr)^p,
$$
where we have used the first implication in \eqref{eq-ball}.
\end{proof}

\begin{lemma}\label{lem5}
Let $\Phi\in{\bf \Phi}_w\cap {\rm (aDec)}_q$, $q\in[1, \infty)$,
and $\varepsilon\in(0, (b_\Phi(q))^{-1})$.
Suppose that
 $\mu$ and $\nu$ are two
probability measures on $\Omega$ and
 $X$ is a subspace in $C(\Omega)$.
Then the inequality
$$
\sup\limits_{\substack{f\in X\\ \|f\|_{L^\Phi(\mu)}< 1}}
\Bigl|\int_\Omega\Phi(|f|)\, d\nu - \int_\Omega\Phi(|f|)\, d\mu\Bigr|
\le \varepsilon
$$
implies
\begin{equation}\label{vsp}
a_\Phi(1)^{-1}((b_\Phi(q))^{-1}-\varepsilon)\|f\|_{L^\Phi(\mu)}\le \|f\|_{L^\Phi(\nu)}
\le a_\Phi(1)(1+\varepsilon)\|f\|_{L^\Phi(\mu)}\quad \forall f\in X.
\end{equation}
\end{lemma}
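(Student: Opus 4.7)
The plan is to prove the two halves of \eqref{vsp} separately, both via a scaling-and-limit argument built around the one-sided implications \eqref{eq-ball} between the Luxembourg functional and the modular $\rho_\Phi$. The upper half uses only the ${\rm(aInc)}_1$ condition built into ${\bf \Phi}_w$ (via the dilation estimate \eqref{eq-est}); the lower half requires combining ${\rm(aDec)}_q$ with the definition of the Luxembourg norm as an infimum.

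For the upper bound I would fix $f\in X$ with $\alpha:=\|f\|_{L^\Phi(\mu)}>0$ and, for small $\delta>0$, put $g_\delta:=f/(\alpha+\delta)$, which satisfies $\|g_\delta\|_{L^\Phi(\mu)}<1$. The first implication of \eqref{eq-ball} yields $\rho_\Phi(g_\delta,\mu)\le 1$, so the hypothesis gives $\rho_\Phi(g_\delta,\nu)\le 1+\varepsilon$. Using \eqref{eq-est} with $\lambda=1+\varepsilon$ and integrating produces $\rho_\Phi(a_\Phi(1)^{-1}(1+\varepsilon)^{-1}g_\delta,\nu)\le 1$; the second implication of \eqref{eq-ball} then gives $\|g_\delta\|_{L^\Phi(\nu)}\le a_\Phi(1)(1+\varepsilon)$, and sending $\delta\to 0^+$ concludes.

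The lower bound is the substantial half. The key observation is that for any $\eta>(\alpha+\delta)/\alpha$ the dilation $\eta g_\delta$ has Luxembourg $\mu$-norm strictly above $1$, which forces $\rho_\Phi(\eta g_\delta,\mu)>1$ directly from the definition as an infimum. Applying ${\rm(aDec)}_q$ in the form $\Phi(\eta t)\le b_\Phi(q)\eta^q\Phi(t)$ to undo the dilation yields $\rho_\Phi(g_\delta,\mu)\ge(b_\Phi(q))^{-1}(\alpha/(\alpha+\delta))^q$ after letting $\eta\downarrow(\alpha+\delta)/\alpha$. The hypothesis transfers this bound to $\nu$: $\rho_\Phi(g_\delta,\nu)\ge(b_\Phi(q))^{-1}(\alpha/(\alpha+\delta))^q-\varepsilon$. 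To convert it back into a lower bound on $\beta:=\|f\|_{L^\Phi(\nu)}$, observe that the case $\beta\ge\alpha$ is immediate since $a_\Phi(1)^{-1}((b_\Phi(q))^{-1}-\varepsilon)<1$; in the remaining case $\beta<\alpha$, I would use ${\rm(aInc)}_1$ in the form $\Phi(\eta t)\le a_\Phi(1)\eta\Phi(t)$ for $\eta\in(0,1)$ to dominate $\rho_\Phi(g_\delta,\nu)\le a_\Phi(1)(\lambda/(\alpha+\delta))\rho_\Phi(f/\lambda,\nu)\le a_\Phi(1)\lambda/(\alpha+\delta)$ for $\lambda\in(\beta,\alpha+\delta)$, then combine with the lower bound above, multiply through by $\alpha+\delta$, and send $\lambda\downarrow\beta$ and $\delta\downarrow 0$ to arrive at $((b_\Phi(q))^{-1}-\varepsilon)\alpha\le a_\Phi(1)\beta$.

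The chief obstacle is the one-sided nature of \eqref{eq-ball}: one cannot read off a lower bound on $\rho_\Phi(\cdot,\mu)$ from the size of $\|\cdot\|_{L^\Phi(\mu)}$ directly. The workaround is to inflate past the unit ball (where the reverse implication $\|h\|_\Phi>1\Rightarrow\rho_\Phi(h)>1$ is immediate from the definition) and compensate for the inflation via ${\rm(aDec)}_q$; this is the sole place ${\rm(aDec)}_q$ is used and the origin of the restriction $\varepsilon<(b_\Phi(q))^{-1}$.
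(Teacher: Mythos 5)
Your proof is correct and follows essentially the same strategy as the paper's: both halves use the one-sided implications \eqref{eq-ball} together with the dilation estimates supplied by ${\rm(aInc)}_1$ and ${\rm(aDec)}_q$, and the lower bound in both cases hinges on inflating past the unit ball to extract a lower bound on the $\mu$-modular before transferring it to $\nu$ via the hypothesis. The only difference is organizational — the paper packages the conversion from a $\nu$-modular bound back to a norm bound into a single chain by constructing a scalar $L$ with $\rho_\Phi(L|f|,\nu)>1$, whereas you sandwich $\rho_\Phi(g_\delta,\nu)$ between your lower bound and an upper bound of the form $a_\Phi(1)\lambda/(\alpha+\delta)$ — but this is the same computation arranged differently.
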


\begin{proof}
First, assume that $\|f\|_{L^\Phi(\mu)} < 1$.
Then by \eqref{eq-ball}, $\rho_\Phi(f)\le 1$.
Since $\Phi\in {\rm (aInc)}_1$, by \eqref{eq-est}, we have
$$
\int_\Omega\Phi\bigl(a_\Phi(1)^{-1}(1+\varepsilon)^{-1}|f|\bigr)\, d\nu
\le (1+\varepsilon)^{-1}\int_\Omega\Phi(|f|)\, d\nu
\le (1+\varepsilon)^{-1}\Bigl(\int_\Omega\Phi(|f|)\, d\mu + \varepsilon\Bigr)\le 1,
$$
which gives  $\|f\|_{L^\Phi(\nu)}\le a_\Phi(1)(1+\varepsilon)$.
The latter  yields
$$
\|f\|_{L^\Phi(\nu)}\le a_\Phi(1)(1+\varepsilon)\|f\|_{L^\Phi(\mu)}\quad \forall f\in X.
$$

Second, we suppose  that $1> \|f\|_{L^\Phi(\mu)} > \delta$ for some $\delta\in((b_\Phi(q)\varepsilon)^{1/q}, 1)$.
We note that $\rho_\Phi(\delta^{-1}f)> 1$ due to \eqref{eq-ball}.
Thus, in light of  \eqref{eq-est}, we have
\begin{align*}
\int_\Omega\Phi\bigl(a_\Phi(1)(1-b_\Phi(q)\delta^{-q}\varepsilon)^{-1}b_\Phi(q)&\delta^{-q}|f|\bigr)\, d\nu
\ge (1-b_\Phi(q)\delta^{-q}\varepsilon)^{-1}b_\Phi(q)\delta^{-q}\int_\Omega\Phi(|f|)\, d\nu
\\&\ge (1-b_\Phi(q)\delta^{-q}\varepsilon)^{-1}b_\Phi(q)\delta^{-q}
\Bigl(\int_\Omega\Phi(|f|)\, d\mu - \varepsilon\Bigr)
\\&\ge
(1-b_\Phi(q)\delta^{-q}\varepsilon)^{-1}
\Bigl(\int_\Omega\Phi(\delta^{-1}|f|)\, d\mu - b_\Phi(q)\delta^{-q}\varepsilon\Bigr)
> 
1,
\end{align*}
that is,  $\|f\|_{L^\Phi(\nu)}\ge a_\Phi(1)^{-1}((b_\Phi(q))^{-1}\delta^{q}-\varepsilon)$.
We apply this bound  for $\frac{1+\delta}{2}\frac{f}{\|f\|_{L^\Phi(\mu)}}$ to get
$$
\|f\|_{L^\Phi(\nu)}\ge a_\Phi(1)^{-1}((b_\Phi(q))^{-1}\delta^{q}-\varepsilon)\frac{2}{1+\delta}\|f\|_{L^\Phi(\mu)}.
$$
Taking  the limit as $\delta\to 1-$ completes the proof of the left-hand side estimate in \eqref{vsp}.
\end{proof}

\subsection{Nikolskii's inequality}\label{ssect2-3}
A key assumption to study various problems of discretization is the Nikolskii inequality
(see, e.g., \cite{devore1}, \cite{devore2}).
Let $X_N$ be an $N$-dimensional subspace of $C(\Omega)$, where a compact set $\Omega$
is endowed with a probability measure $\mu$.
We say that the subspace $X_N$ satisfies the Nikolskii inequality for the pair $(p, q)$ with the constant $H=H(X_N,p,q)$
if
$$
\|f\|_q\le H\|f\|_p\quad \forall f\in X_N.
$$
In this case we write $X_N \in NI_{p, q}(H)$.
The case $p=2$ and $q=\infty$ is particularly interesting due to the following observation. 
\begin{remark}
{\rm
It is easy to see
that for any orthonormal basis $\{u_1, \ldots, u_N\}$ of $X_N$ in $L^2(\mu)$,
one has
$
\displaystyle \sum_{k=1}^{N}|u_k(x)|^2 =
\sup\limits_{\substack{f\in X_N\\ \|f\|_2\le 1}}|f(x)|^2.
$
Thus,
$$
X_N \in NI_{2, \infty}(H)\qquad\mbox{ if and only if}\qquad
\sup\limits_{x\in \Omega}\sum_{k=1}^{N}|u_k(x)|^2\le H^2.
$$
We note that one always has $H=H(X_N, 2, \infty)\ge\sqrt{N}\ge 1$.
}
\end{remark}

An important example of the space $X_n$ satisfying Nikolskii's inequality is the set of trigonometric polynomials
$$
\mathcal{T}(Q):=\Bigl\{f(x)=\sum_{k\in Q}c_ke^{i\langle k, x\rangle}\colon c_k\in\mathbb{C}\Bigr\}
$$
with $Q\subset \mathbb{Z}^d$, $\Omega = [0, 2\pi)^d$, $d\mu = \frac{1}{(2\pi)^d}I_\Omega d\lambda$. One has
$\mathcal{T}(Q)\in  NI_{2, \infty}(|Q|^{1/2})$, see \cite{devore1}.

\subsection{Entropy numbers and chaining bound}\label{ssect2-4}
\begin{definition}\label{D-ent}
Let $(F, d)$ be a
metric space.
The entropy numbers $\{e_n(F, d)\}_{n=0}^\infty$ are defined as follows:
$$
e_n(F, d):=
\inf\Bigl\{\varepsilon>0\colon \exists f_1,\ldots, f_{N_n}\in F\colon
F\subset \bigcup\limits_{j=1}^{N_n}B_\varepsilon(f_j)\Bigr\},
$$
where $N_n=2^{2^n}$ for $n\ge 1$ and $N_0=1$
and
$B_\varepsilon(f):=\{g\colon d(f,g)<\varepsilon\}$.
\end{definition}
If the metric $d$ is induced by a norm $\|\cdot\|$, we will
also use the notation $e_n(F,\|\cdot\|)$ in place of $e_n(F, d)$.

\begin{definition}
An admissible sequence of a set $F$ is an increasing sequence $(\mathcal{F}_n)$ of partitions of $F$
such that $|\mathcal{F}_n|\le 2^{2^n}$ for all $n\ge1$ and $|\mathcal{F}_0|=1$.
For $f\in F$, let $F_n(f)$ denote the unique element of $\mathcal{F}_n$ that contains $f$.
\end{definition}

\begin{definition}
Let $(F, d)$ be a metric space.
Let
$$
\gamma_2(F,d):=
\inf\sup_{f\in F}\sum\limits_{n=0}^\infty 2^{n/2} \mathrm{diam}\bigl(F_n(f)\bigr),
$$
where $\mathrm{diam}(G):=\sup\limits_{f,g\in G}d(f,g)$
and the infimum is taken over all admissible sequences of $F$.
\end{definition}
The quantity $\gamma_2(F,d)$ is called the chaining functional.
If the metric $d$ is induced by a norm $\|\cdot\|$, we will
use the notation $\gamma_2(F, \|\cdot\|)$ in place of $\gamma_2(F, d)$.


The classical Dudley's entropy bound states that (see \cite[Prop. 2.2.10]{Tal} and  the preceding discussion)
\begin{equation}\label{Dudley}
\gamma_2(F, d)\le \sum\limits_{n=0}^\infty 2^{n/2} e_n(F,d).
\end{equation}

We will use the following theorem from  \cite{GMPT07} (see Theorem 1.2 there),
which is a combination of the chaining bound (see \cite{Tal}) and the Gin\'e--Zinn symmetrization argument.

\begin{theorem}\label{GMPT}
There is a numerical constant $c>0$ such that for any
i.i.d. random vectors $x_1, \ldots, x_m$ with the distribution $\mu$ on the set $\Omega$, one has
$$
\mathbb{E}\Bigl(\sup\limits_{g\in G}
\Bigl|\frac{1}{m}\sum\limits_{j=1}^m|g(x_j)|^2 - \int_\Omega|g|^2\, d\mu\Bigr|\Bigr)
\le
c\Bigl(A + A^{1/2}\Bigl(\sup\limits_{g\in G}\int_{\Omega}|g|^2\, d\mu\Bigr)^{1/2} \Bigr),
$$
where
$$
A=\frac{1}{m}\mathbb{E}\bigl(\gamma_2^2(G, \|\cdot\|_{\infty, {\mathbf{x}}})\bigr)
$$
and  $\|g\|_{\infty, {\mathbf{x}}} := \max\limits_{1\le j\le m}|g(x_j)|$.
\end{theorem}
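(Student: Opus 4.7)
I would combine the Gin\'e--Zinn symmetrization principle with Talagrand's generic chaining bound for sub-Gaussian processes. \emph{First}, by the standard symmetrization inequality applied to the mean-zero empirical process indexed by $\{|g|^2:g\in G\}$,
$$
\mathbb{E}\sup_{g\in G}\Bigl|\frac{1}{m}\sum_{j=1}^m |g(x_j)|^2 - \int_\Omega |g|^2\, d\mu\Bigr|
\le 2\,\mathbb{E}_{\mathbf{x}}\mathbb{E}_{\varepsilon}\sup_{g\in G}\Bigl|\frac{1}{m}\sum_{j=1}^m \varepsilon_j |g(x_j)|^2\Bigr|,
$$
with $\varepsilon_1,\ldots,\varepsilon_m$ independent Rademacher signs that are also independent of $\mathbf{x}$. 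By augmenting $G$ with the zero function (which changes neither side of the target bound beyond an inessential constant) we may assume $0\in G$, so that the Rademacher process $Z_g := \tfrac{1}{m}\sum_j\varepsilon_j|g(x_j)|^2$ satisfies $Z_0=0$.

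\emph{Second}, conditionally on $\mathbf{x}$, the difference $Z_g-Z_h$ is sub-Gaussian with variance proxy $\tfrac{1}{m^2}\sum_j(|g(x_j)|^2-|h(x_j)|^2)^2$. Using the factorization $|g|^2-|h|^2=(|g|-|h|)(|g|+|h|)$ together with $(|g|-|h|)^2\le(g-h)^2$ and $(|g|+|h|)^2\le 2(|g|^2+|h|^2)$, the $j$-th summand is dominated pointwise by $2\|g-h\|_{\infty,\mathbf{x}}^2(|g(x_j)|^2+|h(x_j)|^2)$. Averaging in $j$ produces a sub-Gaussian proxy of size at most $\tfrac{4R_{\mathbf{x}}^2}{m}\|g-h\|_{\infty,\mathbf{x}}^2$, where
$$
R_{\mathbf{x}}:=\sup_{g'\in G}\Bigl(\tfrac{1}{m}\sum_{j=1}^m|g'(x_j)|^2\Bigr)^{1/2}.
$$
Talagrand's generic chaining bound (with $Z_0=0$ and the $\varepsilon\mapsto-\varepsilon$ symmetry used to absorb the absolute value) then yields
$$
\mathbb{E}_{\varepsilon}\sup_{g\in G}|Z_g|\le \frac{C\,R_{\mathbf{x}}}{\sqrt{m}}\,\gamma_2(G,\|\cdot\|_{\infty,\mathbf{x}}).
$$

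\emph{Third}, integrating in $\mathbf{x}$ and applying Cauchy--Schwarz,
$$
\mathbb{E}_{\mathbf{x}}\bigl[R_{\mathbf{x}}\,\gamma_2(G,\|\cdot\|_{\infty,\mathbf{x}})\bigr]
\le \bigl(\mathbb{E} R_{\mathbf{x}}^2\bigr)^{1/2}\bigl(\mathbb{E}\gamma_2^2(G,\|\cdot\|_{\infty,\mathbf{x}})\bigr)^{1/2}
=\bigl(\mathbb{E} R_{\mathbf{x}}^2\bigr)^{1/2}\sqrt{mA}.
$$
Denoting by $Y$ the left-hand side of the theorem and writing $S^2:=\sup_{g\in G}\int_\Omega|g|^2\,d\mu$, the triangle inequality gives $\mathbb{E} R_{\mathbf{x}}^2\le S^2+Y$. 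Combining the three steps produces the self-bounding inequality $Y\le C\sqrt{A(S^2+Y)}$, i.e.\ $Y^2\le C^2 A S^2+C^2 A Y$. Solving this quadratic in $Y$ delivers the announced estimate $Y\le c(A+A^{1/2}S)$.

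The decisive technical point is the calibration performed in the second step: the sub-Gaussian increment must be engineered so that the chaining distance is exactly $\|\cdot\|_{\infty,\mathbf{x}}$ scaled by an $L^2$-type factor $R_{\mathbf{x}}/\sqrt{m}$, which after Cauchy--Schwarz cleanly extracts $\sup_g\|g\|_{L^2(\mu)}$ rather than an unusable supremum-norm quantity. The factorization $|g|^2-|h|^2=(|g|-|h|)(|g|+|h|)$ is what enables this calibration, and the appearance of $R_{\mathbf{x}}$ forces the self-bounding step that ultimately creates the mixed term $A^{1/2}S$ in the final bound.
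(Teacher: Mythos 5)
The paper does not actually prove this statement: it is imported verbatim from \cite{GMPT07} (Theorem 1.2 there), with only the remark that it is ``a combination of the chaining bound and the Gin\'e--Zinn symmetrization argument.'' Your proposal reconstructs exactly that argument, and it is essentially correct: symmetrization, the Hoeffding sub-Gaussian increment bound combined with the factorization $|g|^2-|h|^2=(|g|-|h|)(|g|+|h|)$ to dominate the chaining metric by $\tfrac{2R_{\mathbf{x}}}{\sqrt m}\|g-h\|_{\infty,\mathbf{x}}$, the generic chaining estimate conditionally on $\mathbf{x}$, Cauchy--Schwarz in $\mathbf{x}$, and the self-bounding quadratic $Y\le C\sqrt{A(S^2+Y)}$ are precisely the steps behind the cited result.

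One point deserves more care than your parenthetical grants it: the claim that adjoining the zero function to $G$ ``changes neither side beyond an inessential constant.'' The left-hand side is indeed unchanged, but $\gamma_2(G\cup\{0\},\|\cdot\|_{\infty,\mathbf{x}})$ is not controlled by $\gamma_2(G,\|\cdot\|_{\infty,\mathbf{x}})$ alone; it can exceed it by the distance from $0$ to $G$, which $A$ does not see (for a singleton class $G=\{g\}$ one has $\gamma_2=0$, so the stated bound cannot hold without some anchor). Thus your chaining step genuinely needs either $0\in G$ (equivalently some $g_0$ with $Z_{g_0}=0$) or a separate estimate of $\mathbb{E}|Z_{g_0}|$, which then contributes a radius-type term. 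This is an implicit normalization already present in the quoted statement rather than a defect peculiar to your argument, and it is harmless where the paper uses the theorem, since there $G=\{(\Phi(|f|))^{1/2}\colon f\in X_N^\Phi\}$ contains the zero function because $0\in X_N^\Phi$ and $\Phi(0)=0$; but the proof should invoke that hypothesis rather than claim the augmentation is free in general.
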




We will also use the following useful property of the entropy numbers.

\begin{lemma}\label{lem4}
Let $a, b>0$.
Let $X_N$ be an $N$-dimensional space endowed with a norm $\|\cdot\|$ and
let $F\subset X_N$.
Then there is a number $C(a, b)>0$ such that
$$
\sum\limits_{n > [\log N]}\bigl(2^{a n} e_n(F, \|\cdot\|)\bigr)^b
\le C(a, b) \sum\limits_{n \le [\log N]}\bigl(2^{a n} e_n(F, \|\cdot\|)\bigr)^b.
$$
\end{lemma}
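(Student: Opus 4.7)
The plan is to exploit the finite-dimensionality of $X_N$: once $n$ is past the threshold $n_0 := [\log N]$ (so that $2^{2^n}$ exceeds the $N$-dimensional volumetric capacity of $X_N$), the entropy numbers $e_n(F, \|\cdot\|)$ are forced to decay doubly exponentially. Concretely, I would aim to show that there exist absolute constants $C_1 \ge 1$ and $c_1 > 0$ such that, for every $j \ge 1$,
$$
e_{n_0 + j}(F, \|\cdot\|) \le C_1 \cdot 2^{-c_1 \cdot 2^j} \cdot e_{n_0}(F, \|\cdot\|).
$$
Once this is in hand, the tail sum will collapse against the single $n = n_0$ term on the right-hand side of the lemma.

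To prove this geometric estimate, fix an internal $\varepsilon$-net of $F$ at level $n_0$: points $f_1, \ldots, f_M \in F$ with $M \le 2^{2^{n_0}}$ and $F \subset \bigcup_i B_\varepsilon(f_i)$ for $\varepsilon$ arbitrarily close to $e_{n_0}(F)$. Each ball $B_\varepsilon(f_i)$ intersected with $X_N$ lies inside an $N$-dimensional ball of radius $\varepsilon$ and therefore, by the standard volume argument together with the usual centre-replacement trick, can be covered by at most $(C_0 \varepsilon/\eta)^N$ balls of radius $\eta$ centred in $F$. Refining every $B_\varepsilon(f_i)$ this way produces an internal $\eta$-cover of $F$ of cardinality at most $2^{2^{n_0}} (C_0 \varepsilon/\eta)^N$. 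Requiring this to be at most $2^{2^{n_0 + j}}$ forces $\eta \ge C_0 \varepsilon \cdot 2^{-2^{n_0}(2^j - 1)/N}$, and since $2^{n_0} \le N < 2^{n_0+1}$ yields $2^{n_0}/N \ge 1/2$, this delivers the advertised bound $e_{n_0 + j}(F, \|\cdot\|) \le C_1 \cdot 2^{-c_1 \cdot 2^j} \cdot e_{n_0}(F, \|\cdot\|)$.

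With the geometric estimate in place, the tail sum becomes
\begin{align*}
\sum_{n > n_0} \bigl(2^{a n} e_n(F, \|\cdot\|)\bigr)^b
&= \sum_{j \ge 1} \bigl(2^{a(n_0 + j)} e_{n_0 + j}(F, \|\cdot\|)\bigr)^b \\
&\le \bigl(2^{a n_0} e_{n_0}(F, \|\cdot\|)\bigr)^b \cdot C_1^b \sum_{j \ge 1} 2^{abj - b c_1 \cdot 2^j},
\end{align*}
and the series in $j$ converges to a constant depending only on $a$ and $b$, because $2^j$ eventually dominates any linear function of $j$. Since $\bigl(2^{a n_0} e_{n_0}(F, \|\cdot\|)\bigr)^b$ is one of the summands on the right-hand side of the lemma, we obtain the claim with $C(a, b) := C_1^b \sum_{j \ge 1} 2^{abj - b c_1 \cdot 2^j}$. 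The only point requiring mild care is small-$j$ bookkeeping: $2^{-c_1 \cdot 2^j}$ exceeds $1$ for $j = 1, 2$, but this is absorbed into $C_1$ in line with the trivial monotonicity $e_{n_0 + j} \le e_{n_0}$; otherwise the argument is a direct volumetric decoupling with no deeper obstacle.
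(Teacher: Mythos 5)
Your proof is correct, and its overall strategy coincides with the paper's: both arguments rest on the fact that once $n$ exceeds $n_0=[\log N]$ the entropy numbers of a subset of an $N$-dimensional space decay doubly exponentially, so the tail $\sum_{n>n_0}(2^{an}e_n)^b$ is dominated by the single summand $(2^{an_0}e_{n_0})^b$. The one substantive difference is where the decay estimate comes from. The paper invokes it as a black box, citing (7.1.6) and Corollary 7.2.2 of \cite{TemBook}, which give $e_n(F,\|\cdot\|)\le 3\cdot 2^{2^{n_0}/N}\,e_{n_0}(F,\|\cdot\|)\,2^{-2^n/N}$ for $n>n_0$; you instead reprove this estimate from scratch by a volumetric covering argument (refine an internal $\varepsilon$-net at level $n_0$ inside the $N$-dimensional space, with the usual centre-replacement), arriving at $e_{n_0+j}\le C_1 2^{-c_1 2^j}e_{n_0}$, which is the same bound after the substitution $n=n_0+j$ and $2^{n_0}/N\in[\tfrac12,1]$. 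The tail summation also differs cosmetically: the paper bounds $\sum_{n>n_0}(2^{an}2^{-2^n/N})^b$ by an integral to get $c(a,b)N^{ab}$ and then converts $N^{ab}$ back into $2^{abn_0}$, whereas you sum the series $\sum_{j\ge1}2^{abj-bc_1 2^j}$ directly against the $n_0$-term; both are routine. Net effect: your version is self-contained and avoids the external citation, at the cost of spelling out the covering argument; the paper's version is shorter because it defers that work to the reference.

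One small slip of wording: you write that $2^{-c_1\cdot 2^j}$ exceeds $1$ for $j=1,2$, but with $c_1=\tfrac12$ this quantity is $2^{-1}$ and $2^{-2}$, both less than $1$. What you presumably mean is that the full prefactor $C_1 2^{-c_1 2^j}$ may exceed $1$ for small $j$ (if $C_1$ is large), in which case the bound is weaker than the trivial monotonicity $e_{n_0+j}\le e_{n_0}$; that is harmless and, as you say, absorbed into $C_1$. This does not affect the correctness of the argument.
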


\begin{proof}
Let $n_0=[\log N]$.
It is known (see  (7.1.6)  and Corollary 7.2.2 in \cite{TemBook}) that
\begin{equation*}\label{eq1.1}
e_n(F,\|\cdot\|)\le 3\cdot 2^{2^{n_0}/N} e_{n_0}(F,\|\cdot\|) 2^{-2^n/N}\quad
\forall n>n_0.
\end{equation*}
Thus,
$$
\sum\limits_{n > [\log N]}(2^{a n} e_n(F, \|\cdot\|))^b
\le
6^b\bigl(e_{n_0}(F,\|\cdot\|)\bigr)^b\sum\limits_{n > [\log N]}\bigl(2^{a n} 2^{-2^n/N}\bigr)^b.
$$
We note that
\begin{eqnarray*}
\sum\limits_{n\ge \log N}\bigl(2^{a n}2^{-2^n/N}\bigr)^b
&\le&
2\max(2^{ab-1}, 1)\int_{0}^{\infty}x^{ab-1}2^{-xb/N}\, dx
\\
&=&
2\max(2^{ab-1}, 1)\bigl(\tfrac{N}{b\ln 2}\bigr)^{ab}
\int_{0}^{\infty}y^{ab-1}e^{-y}\, dy
=c(a, b)N^{ab}.
\end{eqnarray*}
Thus,
\begin{eqnarray*}
\sum\limits_{n > [\log N]}(2^{a n} e_n(F, \|\cdot\|))^b
&\le& 6^b c(a, b) N^{ab}\bigl(e_{n_0}(F,\|\cdot\|)\bigr)^b
\le 6^b c(a, b) 2^{ab} \bigl(2^{an_0}e_{n_0}(F,\|\cdot\|)\bigr)^b
\\
&\le& 6^b c(a, b) 2^{ab}
\sum\limits_{n \le [\log N]}\bigl(2^{a n} e_n(F, \|\cdot\|)\bigr)^b.
\end{eqnarray*}
The proof is now complete.
\end{proof}

\vspace{0.2cm}
\section{Discretization with equal weights for $\Phi\in {\rm (aInc)}_p\cap{\rm (Dec)}$}\label{sect3}

\subsection{Main theorems and discussion}\label{ssect3-1}

This section is devoted to the proof of the following two theorems for
${\bf \Phi}$-function satisfying the condition ${\rm (aInc)}_p\cap{\rm (Dec)}$.

\begin{theorem}\label{cor4.1}
Let $p\in (1, \infty)$.
Let $\Phi$ be a ${\bf \Phi}$-function such that $\Phi\in {\rm (aInc)}_p\cap{\rm (Dec)}$.
There exists a constant $C_{\Phi, p}>0$,
depending only on $p$ and $\Phi$, such that for every $N\ge 1$, $H\ge 1$,
for every $N$-dimensional subspace $X_N\in NI_{2, \infty}(H)$,
for every $\varepsilon\in(0, 1/2]$, and for every
$$
m\ge C_{\Phi, p}
\varepsilon^{-2}(\log\varepsilon^{-1})^{\frac{\min\{p, 2\}}{2}}
\Phi\bigl(H^{\frac{2}{\min\{p, 2\}}}\bigr)(\log 2N)^2\log 2H^2,
$$
there is a set ${\bf x}:=\{x_1, \ldots, x_m\}\subset \Omega$
of cardinality $m$
such that
$$
a_\Phi(1)^{-1}(1-\varepsilon)\|f\|_\Phi\le \|f\|_{\Phi, {\bf x}}\le a_\Phi(1)(1+\varepsilon)\|f\|_\Phi
\quad \forall f\in X_N.
$$
\end{theorem}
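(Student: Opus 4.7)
The approach is probabilistic. I sample $x_1,\ldots,x_m$ i.i.d.\ from $\mu$, set $\nu = \frac{1}{m}\sum_j \delta_{x_j}$, and aim to show that with positive probability
\begin{equation*}
\sup_{f \in X_N^\Phi}\Bigl|\rho^\nu_\Phi(f) - \rho_\Phi(f)\Bigr| \le \varepsilon',
\end{equation*}
with $\varepsilon'$ chosen small enough that Lemma \ref{lem5} produces the norm comparison with distortion $(1\pm\varepsilon)$. The $(\log \varepsilon^{-1})^{\min\{p,2\}/2}$ factor in the hypothesis on $m$ comes from the nonlinearity of this modular-to-norm passage and the rate at which $\varepsilon'$ must shrink relative to $\varepsilon$.

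To control the expectation of the supremum I apply Theorem \ref{GMPT} to the family $G := \{\sqrt{\Phi(|f|)} : f \in X_N^\Phi\}$, so that $|g_f|^2 = \Phi(|f|)$ and $\sup_g \int|g|^2 d\mu \le 1$ by \eqref{eq-ball}. The theorem reduces matters to bounding $\gamma_2(G, \|\cdot\|_{\infty,\mathbf{x}}) \le \gamma_2(G, \|\cdot\|_\infty)$, which I handle via Dudley's inequality \eqref{Dudley}. For this I need two ingredients. First, a uniform bound $M := \sup_{f\in X_N^\Phi}\|f\|_\infty \lesssim_\Phi H^{2/\min\{p,2\}}$: for $p\ge 2$ this follows from Lemma \ref{lem0.5} (giving $\|f\|_2 \le \|f\|_p \lesssim \|f\|_\Phi$) combined with Nikolskii; for $p<2$ I interpolate $\|f\|_2 \le \|f\|_\infty^{1-p/2}\|f\|_p^{p/2}$ and reapply Nikolskii. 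Second, a H\"older-$\tfrac12$ regularity of $f \mapsto \sqrt{\Phi(|f|)}$: combining $|\sqrt{u}-\sqrt{v}|\le\sqrt{|u-v|}$ with Lemma \ref{lem0} (valid since $\Phi\in(\mathrm{Dec})_q$) and the almost monotonicity of $t \mapsto \Phi(t)/t$ coming from $\Phi \in (\mathrm{aInc})_1$ yields
\begin{equation*}
\|g_{f_1}-g_{f_2}\|_\infty \lesssim_\Phi \sqrt{\Phi(M)/M}\cdot\sqrt{\|f_1-f_2\|_\infty},
\end{equation*}
hence $e_n(G,\|\cdot\|_\infty) \lesssim_\Phi \sqrt{\Phi(M)/M}\cdot\sqrt{e_n(X_N^\Phi,\|\cdot\|_\infty)}$.

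To finish, the volumetric $N$-dimensional bound for $e_n(X_N^\Phi, \|\cdot\|_\infty)$, together with Lemma \ref{lem4} to truncate the Dudley tail, should yield $\gamma_2(G,\|\cdot\|_\infty) \lesssim_\Phi \sqrt{\Phi(M)(\log 2N)^2 \log 2H^2}$, so that Theorem \ref{GMPT} gives $A + A^{1/2} \lesssim_\Phi \varepsilon'$ once $m$ exceeds the stated threshold. Markov's inequality then produces a sample $\mathbf{x}$ on which the uniform modular bound holds, and Lemma \ref{lem5} converts this into the announced norm comparison with factors $a_\Phi(1)^{\pm 1}(1\pm\varepsilon)$.

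The main obstacle is the Dudley accounting: recovering the polylogarithmic factor $(\log 2N)^2 \log 2H^2$ rather than a superfluous $\sqrt{N}$ requires exploiting the H\"older-$\tfrac12$ compression of the entropy scale together with Lemma \ref{lem4} to amputate the tail of Dudley's series at $n\approx \log N$, and a two-regime split between the Nikolskii-dominated and the volumetric scales to extract the $\log 2H^2$ factor. A secondary delicacy is the $p<2$ case, where Lemma \ref{lem0.5} does not directly relate $\|f\|_2$ to $\|f\|_\Phi$ and the interpolation step indicated above becomes necessary.
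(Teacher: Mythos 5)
Your overall blueprint — sample i.i.d., apply Theorem~\ref{GMPT} to $G=\{\sqrt{\Phi(|f|)}:f\in X_N^\Phi\}$, exploit H\"older-$\tfrac12$ regularity via Lemma~\ref{lem0} to reduce to entropy numbers of $X_N^\Phi$, truncate Dudley with Lemma~\ref{lem4}, then close via Lemma~\ref{lem5} — is the paper's approach, and the $p<2$ interpolation $\|h\|_\infty\le H^{2/p_*}\|h\|_{p_*}$ is also what the paper does. However, there is a genuine gap in the ``Dudley accounting'' step, and you have correctly identified it as the crux but the tool you propose to fill it will not work.

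Passing from $\|\cdot\|_{\infty,\mathbf{x}}$ to the full $\|\cdot\|_\infty$ and then using volumetric bounds on $e_n(X_N^\Phi,\|\cdot\|_\infty)$ cannot produce the polylogarithmic factors. For $n\le[\log N]$ the volumetric bound gives essentially no decay: $e_n(X_N^2,\|\cdot\|_\infty)\lesssim H$ is the best one can say, hence $e_n(G,\|\cdot\|_\infty)\lesssim\sqrt{\Phi(M)/M}\cdot\sqrt{H}\sim\sqrt{\Phi(H)}$ on this range, and $\sum_{n\le[\log N]}2^{n/2}e_n(G,\|\cdot\|_\infty)\gtrsim\sqrt{\Phi(H)}\cdot\sqrt{N}$ — the $\sqrt{N}$ you were hoping to avoid is already present and Lemma~\ref{lem4} (which only controls $n>[\log N]$) cannot remove it. The ``two-regime split'' you gesture at does not exist for the continuous $L^\infty$ norm.

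The paper's fix, and the missing ingredient, is to work directly with the \emph{random discrete} seminorm $\|\cdot\|_{\infty,\mathbf{x}}$ (a $\sup$ over only $m$ points) and to estimate $e_n(X_N^2,\|\cdot\|_{\infty,\mathbf{x}})$ not volumetrically but via the dual Sudakov inequality (\cite[L.~8.3.6]{Tal}): $e_n(X_N^2,\|\cdot\|_{\infty,\mathbf{x}})\lesssim 2^{-n/2}\,\mathbb{E}_g\max_{1\le j\le m}\bigl|\sum_k g_k u_k(x_j)\bigr|$ for an orthonormal basis $\{u_k\}$ and a standard Gaussian $g$. Because the maximum is over only $m$ points, the Gaussian maximum estimate gives $\mathbb{E}_g\max_j|\cdot|\lesssim H\sqrt{\log m}$ (rather than $H\sqrt{N}$, which is what one would obtain over all of $\Omega$), so $e_n(X_N^2,\|\cdot\|_{\infty,\mathbf{x}})\lesssim H\sqrt{\log m}\,2^{-n/2}$. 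The $2^{-n/2}$ decay exactly cancels Dudley's $2^{n/2}$ weight, and $\sum_{n\le[\log N]}$ then contributes only $\log 2N$ rather than $\sqrt{N}$. An analogous estimate (Lemma~4.10 of \cite{Kos}) is used for $X_N^{p}$ when $p\in(1,2)$. Without this discrete dual-Sudakov step your bound on $A$ will be off by a polynomial factor in $N$.
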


In the case $p=1$ we will prove the following counterpart of the above theorem.

\begin{theorem}\label{t3.2}
Let $\Phi\in{\bf \Phi}_w$ be such that $\Phi\in {\rm (Dec)}$.
There exists a constant $C_{\Phi}>0$, depending only on $\Phi$ such that
for every $N\ge 1$, $H\ge1$, for every $N$-dimensional subspace $X_N\in NI_{2, \infty}(H)$, and
for every $\varepsilon\in(0, 1/2]$, there are $m$ points $x_1, \ldots, x_m\in\Omega$ with
$$
m\le C_{\Phi} \varepsilon^{-2} (\log \varepsilon^{-1}) \Phi\bigl(H^2\bigr) (\log 2N)^2 \log 2H^2
$$
such that
$$
a_\Phi(1)^{-2}(1-\varepsilon)\|f\|_\Phi\le \|f\|_{\Phi, {\bf x}}\le a_\Phi(1)^2(1+\varepsilon)\|f\|_\Phi
\quad \forall f\in X_N.
$$
\end{theorem}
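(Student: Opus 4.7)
My plan is to realise the sample by a probabilistic argument: draw $x_1, \ldots, x_m$ i.i.d.\ from $\mu$ and apply the chaining bound in Theorem~\ref{GMPT}. Since $\Phi \in {\bf \Phi}_w \cap {\rm (Dec)}_q$ for some $q \ge 1$, Lemma~\ref{lem5} tells us that it suffices to produce, with positive probability,
\begin{equation*}
\sup_{f \in X_N^\Phi} \Bigl| \frac{1}{m}\sum_{j=1}^m \Phi(|f(x_j)|) - \int_\Omega \Phi(|f|)\, d\mu \Bigr| \le \tilde\varepsilon
\end{equation*}
for a suitable small multiple $\tilde\varepsilon$ of $\varepsilon$; the two-sided bound in Theorem~\ref{t3.2} then follows directly from Lemma~\ref{lem5}, and the factors $a_\Phi(1)^{\pm 2}$ appear naturally there.

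To estimate this uniform modular deviation I would apply Theorem~\ref{GMPT} to the function class $G := \{\sqrt{\Phi(|f|)} : f \in X_N^\Phi\}$, so that $|g|^2 = \Phi(|f|)$ and $\sup_{g \in G}\int g^2\, d\mu \le 1$ by~\eqref{eq-ball}. The problem then reduces to making the sampled chaining functional $A = \mathbb{E}\gamma_2^2(G, \|\cdot\|_{\infty,\mathbf{x}})/m$ of order $\varepsilon^2$, at which point Markov's inequality selects a good realisation. An a priori pointwise bound on $X_N^\Phi$ is also needed, and this comes from combining Lemma~\ref{lem0.5} (with $p = 1$) with the Nikolskii inequality $\|f\|_\infty \le H\|f\|_2$ and the interpolation $\|f\|_2^2 \le \|f\|_\infty \|f\|_1$, yielding $\|f\|_\infty \le H^2 R_\Phi(1)$ for every $f \in X_N^\Phi$.

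The main analytic work is to transfer entropy estimates for $X_N^\Phi$ to entropy estimates for $G$. Lemma~\ref{lem0} gives a Lipschitz-type bound for $\Phi$ on the interval $[0, H^2 R_\Phi(1)]$ (exploiting $\Phi \in {\rm (Dec)}_q$), which combined with the elementary $|\sqrt{a} - \sqrt{b}| \le \sqrt{|a-b|}$ shows that $f \mapsto \sqrt{\Phi(|f|)}$ is H\"older-$1/2$ in the sup norm with constant of order $(\Phi(H^2)/H^2)^{1/2}$. Consequently,
$$e_n(G, \|\cdot\|_{\infty,\mathbf{x}}) \le c_\Phi\bigl(\Phi(H^2)/H^2\bigr)^{1/2}\sqrt{e_n(X_N^\Phi, \|\cdot\|_{\infty,\mathbf{x}})}.$$
The covering numbers of $X_N^\Phi$ in the (empirical) sup norm are then estimated via the trivial bound $e_n \le H^2 R_\Phi(1)$ for small $n$ and standard $N$-dimensional volume estimates for large $n$, with Lemma~\ref{lem4} absorbing the tail $n > \log N$. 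Plugging these into Dudley's inequality~\eqref{Dudley} and summing bounds $\gamma_2(G, \|\cdot\|_{\infty,\mathbf{x}})$, and hence $A$.

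The hard part will be the fine accounting in the last step: the H\"older-$1/2$ transfer combined with the $\sqrt{N}$-scale entropy of $X_N^\Phi$ in the sup norm threatens to introduce a polynomial factor of $N$ in the sample size. Controlling this requires exploiting both Lemma~\ref{lem4} and the averaging that is built into $A = \mathbb{E}\gamma_2^2/m$, so that the dependence on $N$ is whittled down to the claimed $(\log 2N)^2$, the dependence on $H$ to $\log 2H^2$, and $\Phi(H^2)$ appears only once; once this is achieved, the quantitative threshold for $m$ drops out by comparing $A$ to $\varepsilon^2$.
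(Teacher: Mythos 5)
The proposal correctly sets up the machinery — drawing i.i.d.\ points, invoking Theorem~\ref{GMPT} with $G = \{\sqrt{\Phi(|f|)}\}$, the pointwise bound $\|f\|_\infty \le H^2 R_\Phi(1)$ on $X_N^\Phi$, the H\"older-$1/2$ transfer via Lemma~\ref{lem0} and $|\sqrt a - \sqrt b|\le\sqrt{|a-b|}$, and Lemma~\ref{lem5} to turn modular control into a norm comparison. But you yourself flag the obstruction at the end, and the remedy you suggest does not work. In the $p=1$ case the Dudley sum to be controlled is $\sum_{n\le[\log N]} 2^{n/2}\bigl(e_n(X_N^1,\|\cdot\|_{\infty,\mathbf{x}})\bigr)^{1/2}$, and the only entropy input you have is $e_n(X_N^1) \le H\, e_n(X_N^2) \le cH^2 2^{-n/2}(\log m)^{1/2}$ (from~\eqref{entropy} and $X_N^1\subset HX_N^2$). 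Inserting this produces $\sum_{n\le[\log N]} 2^{n/2}\cdot 2^{-n/4}\cdot(\text{const}) \sim N^{1/4}$, so after squaring $A$ picks up an $N^{1/2}$ factor. Lemma~\ref{lem4} only truncates the tail $n>\log N$ and does nothing about the divergence of the sum over $n\le\log N$, and the expectation in $A=\mathbb{E}\gamma_2^2/m$ does not help either — the divergence is already present for every fixed realization of $\mathbf{x}$. The ``trivial $e_n\le H^2R_\Phi(1)$ plus volume estimates'' bound you propose is even weaker for small $n$. There is no way to close this gap by bookkeeping alone.

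What is actually needed is a qualitatively different idea, a two-step bootstrap: first obtain a preliminary finite set $\mathbf{x}_0$ on which \emph{both} $L^\Phi(\mu)$ and $L^2(\mu)$ are approximately preserved on $X_N$ (this is possible because the $p\ge 2$ and $p\in(1,2)$ cases you can already handle give $L^2$ discretization, and the crude $N^{1/2}$-inflated bound is enough for a finite $m_0$). Then on the discrete measure $\nu$ supported on $\mathbf{x}_0$ one still has $\|f\|_{L^\infty(\nu)}\le 2H\|f\|_{L^2(\nu)}$, and crucially one can invoke Lemma~3.3 from~\cite{DPSTT2}, which converts the $2^{-n/2}$ decay of $e_n(X_N^2(\nu),\|\cdot\|_{L^\infty(\nu)})$ into $2^{-n}$ decay of $e_n(X_N^1(\nu),\|\cdot\|_{L^\infty(\nu)})$. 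With this sharper decay the Dudley sum over $n\le\log N$ becomes $\sum 2^{n/2}\cdot 2^{-n/2}\cdot(\text{const}) \sim \log N$, and the $N^{1/2}$ is gone. You then apply Proposition~\ref{cor-key} a second time relative to $\nu$ and subsample $\mathbf{x}\subset\mathbf{x}_0$. Note also that the factors $a_\Phi(1)^{\pm 2}$ in the theorem statement are not produced by a single application of Lemma~\ref{lem5} (which yields only $a_\Phi(1)^{\pm 1}$); they come precisely from stacking the two discretization steps — a further signal that the single-pass argument you outline cannot reach the claimed conclusion.
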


The proofs of Theorems \ref{cor4.1} and \ref{t3.2} are
divided into several steps and contained in  subsections 3.2-3.5.

Let us illustrate the use of Theorems \ref{cor4.1} and \ref{t3.2} for the specific ${\bf \Phi}$-function.
Set
\begin{equation}\label{eq-Phi}
\Phi_{p, \alpha, \beta}(t):= t^p\frac{(\ln(e+t))^\alpha}{(\ln(e+t^{-1}))^\beta},\quad
p\ge 1, \alpha, \beta\ge 0.
\end{equation}
 It is easy to see that
 for $t>0$, one has
\begin{equation*}\label{eq-ex}
\frac{t\Phi_{p, \alpha, \beta}'(t)}{\Phi_{p, \alpha, \beta}(t)}
=
p + \frac{\alpha t}{e+t}\cdot\frac{1}{\ln(e+t)} +
\frac{\beta t^{-1}}{e+t^{-1}}\cdot\frac{1}{\ln(e+t^{-1})}.
\end{equation*}
Thus,
$$
p\le \frac{t\Phi_{p, \alpha, \beta}'(t)}{\Phi_{p, \alpha, \beta}(t)}\le p+\alpha+\beta \quad \hbox{ for }\quad t>0.
$$
{In light of Theorem \ref{cor4.1}, in order to obtain the discretization estimates between $\Phi_{p, \alpha, \beta}$ and  $\Phi_{p, \alpha, \beta}, {\bf x}$, it is sufficient to choose 

$$
m\ge C_{p, \alpha, \beta}
\varepsilon^{-2}(\log\varepsilon^{-1})^{\frac{\min\{p, 2\}}{2}}
\Phi_{p, \alpha, \beta}\bigl(H^{\frac{2}{\min\{p, 2\}}}\bigr)(\log 2N)^2\log 2H^2.
$$
Noting that
$$
\Phi_{p, \alpha, \beta}\bigl(H^{\frac{2}{\min\{p, 2\}}}\bigr)\le
H^{\frac{2p}{\min\{p, 2\}}}\bigl(\ln(e+H^{\frac{2}{\min\{p, 2\}}})\bigr)^\alpha\le
c(p, \alpha) H^{\frac{2p}{\min\{p, 2\}}}(\log2H^2)^\alpha,
$$
we arrive at the following result.
}

\begin{example}\label{ex4.1}
{\rm
Let $p\in (1, \infty)$, $\alpha, \beta\ge0$, $N\ge 1$, $B\ge 1$.
Let $\Phi_{p, \alpha, \beta} := t^p\frac{(\ln(e+t))^\alpha}{(\ln(e+t^{-1}))^\beta}$.
There exists a constant $C_{p, \alpha, \beta}>0$,
depending only on $p$, $\alpha$, and $\beta$, such that
for every $N$-dimensional subspace $X_N\in NI_{2, \infty}(\sqrt{BN})$,
for every $\varepsilon\in(0, 1/2]$, and for every
$$
m\ge C_{p, \alpha, \beta}
\varepsilon^{-2}(\log\varepsilon^{-1})^{\frac{\min\{p, 2\}}{2}}
(BN)^{\frac{p}{\min\{p, 2\}}}(\log 2BN)^{\alpha+1}(\log 2N)^2,
$$
there is a subset ${\bf x}:=\{x_1, \ldots, x_m\}\subset \Omega$
of cardinality $m$
such that
$$
(1-\varepsilon)\|f\|_{\Phi_{p, \alpha, \beta}}\le \|f\|_{\Phi_{p, \alpha, \beta}, {\bf x}}
\le (1+\varepsilon)\|f\|_{\Phi_{p, \alpha, \beta}} \quad \forall f\in X_N.
$$
}
\end{example}

Similarly, Theorem \ref{t3.2} implies the discretization for spaces close to $L_1$.

\begin{example}{\rm
Let $\alpha, \beta\ge0$, $N\ge 1$, $B\ge 1$, and
let $\Phi_{1, \alpha, \beta}:=t\frac{(\ln(e+t))^\alpha}{(\ln(e+t^{-1}))^\beta}$.
There exists a constant $C_{\alpha, \beta}>0$,
depending only on the parameters $\alpha$ and $\beta$, such that,
for every $N$-dimensional subspace $X_N\in NI_{2, \infty}(\sqrt{BN})$ and
for every $\varepsilon\in(0, 1/2]$, there are
$$
m\le C_{\alpha, \beta}
\varepsilon^{-2}(\log\varepsilon^{-1}) BN(\log 2BN)^{\alpha+1}(\log 2N)^2.
$$
and  a subset ${\bf x}:=\{x_1, \ldots, x_m\}\subset \Omega$ of cardinality $m$
such that
$$
(1-\varepsilon)\|f\|_{\Phi_{1, \alpha, \beta}}\le \|f\|_{\Phi_{1, \alpha, \beta}, {\bf x}}\le (1+\varepsilon)\|f\|_{\Phi_{1, \alpha, \beta}}
\quad \forall f\in X_N.
$$
}
\end{example}

\begin{remark}\label{rem2.1}
{\rm
The proof of Theorem \ref{cor4.1} is probabilistic in its nature and it actually implies that,
under the assumptions of the theorem, for $\varepsilon\in(0, 1/2]$, $\delta\in(0, 1]$, and
$$
m\ge C_{\Phi, p}(\varepsilon\delta)^{-2}(\log(\varepsilon\delta)^{-1})^{\frac{\min\{p, 2\}}{2}}
\Phi\bigl(H^\frac{2}{\min\{p, 2\}}\bigr)(\log 2N)^2\log 2H^2,
$$
we have
$$
P\Bigl(a_\Phi(1)^{-1}(1-\varepsilon)\|f\|_\Phi\le \|f\|_{\Phi, {\bf x}}\le a_\Phi(1)(1+\varepsilon)\|f\|_\Phi
\quad \forall f\in X_N\Bigr)\ge 1-\delta/2,
$$
where the points ${\bf x}=(x_1, \ldots, x_m)$ are choosing
independently distributed according to the measure $\mu$.
}
\end{remark}

\subsection{Proof of Theorems \ref{cor4.1} and \ref{t3.2}: the key step}\label{ssect3-2}

To prove Theorems \ref{cor4.1} and \ref{t3.2}, we apply Theorem \ref{GMPT} with the set
$$
G:=\{g:=(\Phi(|f|))^{1/2}\colon f\in X_N^\Phi\}.
$$
Thus,
\begin{eqnarray*}
\mathbb{E}\Bigl(\sup\limits_{f\in X_N^\Phi}
\Bigl|\frac{1}{m}\sum\limits_{j=1}^m\Phi(|f(x_j)|) - \int_\Omega\Phi(|f|)\, d\mu\Bigr|
\Bigr)
&\le&
\mathbb{E}\Bigl(\sup\limits_{g\in G}
\Bigl|\frac{1}{m}\sum\limits_{j=1}^m|g(x_j)|^2 - \int_\Omega|g|^2\, d\mu\Bigr|\Bigr)
\\
\le
c\Bigl(A + A^{1/2}\Bigl(\sup\limits_{g\in G}\int_{\Omega}|g|^2\, d\mu\Bigr)^{1/2} \Bigr)
&\le&
c\Bigl(A + A^{1/2}\Bigl(\sup\limits_{f\in X_N^\Phi}\int_{\Omega}\Phi(|f|)\, d\mu\Bigr)^{1/2} \Bigr),
\end{eqnarray*}
where
$$
A=\frac{1}{m}\mathbb{E}\bigl(\gamma_{2, 1}^2(G, \|\cdot\|_{\infty, {\mathbf{x}}})\bigr)
$$
and  $\|g\|_{\infty, {\mathbf{x}}} := \max\limits_{1\le j\le m}|g(x_j)|$.
Further, to estimate $\gamma_{2, 1}(G, \|\cdot\|_{\infty, {\mathbf{x}}})$, we use  Dudley's entropy bound \eqref{Dudley} to get
$$
\gamma_{2, 1}(G, \|\cdot\|_{\infty, {\mathbf{x}}})
\le C\sum\limits_{n=0}^\infty 2^{n/2} e_n(G, \|\cdot\|_{\infty, {\mathbf{x}}}).
$$

Let $p_*:=\min\{p, 2\}$. We note that $\Phi\in {\rm (aInc)}_{p_*}$ under the assumptions of Theorems \ref{cor4.1} and \ref{t3.2}.
For $f, g\in X_N^\Phi$, by Lemma \ref{lem0}, we have
\begin{eqnarray*}
&&\bigl\|(\Phi(|f|))^{1/2} - (\Phi(|g|))^{1/2}\bigr\|_{\infty, {\mathbf{x}}}
=
\max\limits_{1\le j\le m}\bigl| \bigl((\Phi(|f(x_j)|))^{1/p_*}\bigr)^{p_*/2} - \bigl((\Phi(|g(x_j)|))^{1/p_*}\bigr)^{p_*/2}\bigr|
\\
&\le&
\max\limits_{1\le j\le m}\bigl|(\Phi(|f(x_j)|))^{1/p_*} - (\Phi(|g(x_j)|))^{1/p_*}\bigr|^{p_*/2}
\\
&\le& (qp_*^{-1})^{p_*/2} \max\limits_{1\le j\le m}\Bigl(|f(x_j) - g(x_j)|\cdot\Bigl|\frac{(\Phi(|f(x_j)|))^{1/p_*}}{|f(x_j)|} + \frac{(\Phi(|g(x_j)|))^{1/p_*}}{|g(x_j)|}\Bigr|\Bigr)^{p_*/2}
\\
&\le& 2q\sup\limits_{h\in X_N^\Phi}\max_{1\le j\le m}\Bigl(\frac{\Phi(|h(x_j)|)}{|h(x_j)|^{p_*}}\Bigr)^{1/2}
\|f-g\|_{\infty, {\mathbf{x}}}^{p_*/2}.
\end{eqnarray*}
Thus,
$$
e_n(G, \|\cdot\|_{\infty, {\mathbf{x}}})\le
2q\sup\limits_{h\in X_N^\Phi}\max_{1\le j\le m}\Bigl(\frac{\Phi(|h(x_j)|)}{|h(x_j)|^{p_*}}\Bigr)^{1/2}
\bigl(e_n(X_N^\Phi, \|\cdot\|_{\infty, {\mathbf{x}}})\bigr)^{p_*/2}.
$$
In light of Lemma \ref{lem0.5},
we have $X_N^\Phi \subset R_\Phi(p_*)X_N^{p_*} =\{f\in X_N\colon \|f\|_{p_*}< R_\Phi(p_*)\}$. Hence,
from the  assumption
$X_N\in NI_{2, \infty}(H)$,
we derive
$$
\|h\|_\infty = \Bigl(\frac{\|h\|_\infty^2}{\|h\|_\infty^{2-p_*}}\Bigr)^{1/p_*}
\le \Bigl(\frac{H^2\|h\|_2^2}{\|h\|_\infty^{2-p_*}}\Bigr)^{1/p_*}
\le H^{2/p_*}\|h\|_{p_*}\le
R_\Phi(p_*) H^{2/p_*}\|h\|_{\Phi}\quad \forall h\in X_N^\Phi.
$$
Since the function $t\mapsto \Phi(t)t^{-p_*}$
is almost increasing
and  $|h(x_j)|\le R_\Phi(p_*) H^{2/p*}$ for $h\in X_N^\Phi$,
we have
$$
\sup_{h\in X_N^\Phi}\max_{1\le j\le m}\frac{\Phi(|h(x_j)|)}{|h(x_j)|^{p_*}}
\le a_\Phi(p_*)\frac{\Phi\bigl(R_\Phi(p_*) H^{2/p_*}\bigr)}{(R_\Phi(p_*))^{p_*} H^2}.
$$
From the inclusion $X_N^\Phi \subset R_\Phi(p_*)X_N^{p_*}$ we get
$$
e_n(X_N^\Phi,\|\cdot\|_{\infty, {\bf x}})
\le R_\Phi(p_*)\cdot e_n(X_N^{p_*},\|\cdot\|_{\infty, {\bf x}}).
$$
Therefore,
\begin{eqnarray*}
\sum\limits_{n=0}^\infty 2^{n/2} e_n(G, \|\cdot\|_{\infty, {\mathbf{x}}})
&\le&
2q(a_\Phi(p_*))^{1/2}\Bigl(\frac{\Phi\bigl(R_\Phi(p_*) H^{2/p_*}\bigr)}{(R_\Phi(p_*))^{p_*} H^2}\Bigr)^{1/2}
\sum\limits_{n\ge 0} 2^{n/2}\bigl(e_n(X_N^\Phi, \|\cdot\|_{\infty, {\bf x}})\bigr)^{p_*/2}
\\
&\le&
2q(a_\Phi(p_*))^{1/2}\Bigl(\frac{\Phi\bigl(R_\Phi(p_*) H^{2/p_*}\bigr)}{H^2}\Bigr)^{1/2}
\sum\limits_{n\ge 0} 2^{n/2}\bigl(e_n(X_N^{p_*}, \|\cdot\|_{\infty, {\bf x}})\bigr)^{p_*/2}.
\end{eqnarray*}
By Lemma \ref{lem4}, we have
$$
\sum\limits_{n\ge 0} 2^{n/2}\bigl(e_n(X_N^{p_*}, \|\cdot\|_{\infty, {\bf x}})\bigr)^{p_*/2}
\le C_1\sum\limits_{n\le [\log N]} 2^{n/2}\bigl(e_n(X_N^{p_*}, \|\cdot\|_{\infty, {\bf x}})\bigr)^{p_*/2}
$$
with  some numerical constant $C_1>0$.
Thus, we arrive at  the following statement.

\begin{proposition}\label{cor-key}
Let $p\in [1, +\infty)$, $q\in[p, +\infty)$,
$p_*:=\min\{p, 2\}$,
$N\ge 1$, and $H\ge 1$.
There is a numerical constant $c>0$ such that,
for every ${\bf \Phi}$-function $\Phi\in {\rm (aInc)}_p\cap {\rm (Dec)}_q$
and for every $N$-dimensional subspace
$X_N\in NI_{2, \infty}(H)$, one has
\begin{equation}\label{eq-key}
\mathbb{E}\Bigl(\sup\limits_{f\in X_N^\Phi}
\Bigl|\frac{1}{m}\sum\limits_{j=1}^m\Phi(|f(x_j)|) - \int_\Omega\Phi(|f|)\, d\mu\Bigr|
\Bigr)\le c(A+A^{1/2}),
\end{equation}
where
$$
A=\frac{1}{m}q^2a_\Phi(p_*)
\frac{\Phi\bigl(R_\Phi(p_*) H^{2/p_*}\bigr)}{H^2}
\mathbb{E}\Bigl(\Bigl(\sum\limits_{n\le [\log N]} 2^{n/2}\bigl(e_n(X_N^{p_*}, \|\cdot\|_{\infty, {\bf x}})\bigr)^{p_*/2}\Bigr)^2\Bigr),
$$
$\|f\|_{\infty, {\bf x}}:=\max\limits_{1\le j\le m}|f(x_j)|$, and  points $x_1, \ldots, x_m$ are choosing
randomly, independently, and distributed according to the measure $\mu$.
\end{proposition}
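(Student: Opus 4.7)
The plan is to view the left-hand side of \eqref{eq-key} as an empirical process deviation for the class $G := \{(\Phi(|f|))^{1/2}\colon f \in X_N^\Phi\}$ and apply Theorem \ref{GMPT} directly, since $g^2 = \Phi(|f|)$ recovers exactly the sums in the proposition. This reduces the estimate to bounding $A_0 := m^{-1}\mathbb{E}\gamma_2^2(G, \|\cdot\|_{\infty, \mathbf{x}})$ together with the $L^2(\mu)$-diameter $\sigma^2 := \sup_{f \in X_N^\Phi}\rho_\Phi(f)$. By \eqref{eq-ball}, $\|f\|_\Phi < 1$ forces $\rho_\Phi(f) \leq 1$, so $\sigma \leq 1$ and the mixed term $A_0^{1/2}\sigma$ from Theorem \ref{GMPT} becomes just $A_0^{1/2}$. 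Thus only $A_0 \leq A$ needs to be established.

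For $A_0$, I will apply Dudley's bound \eqref{Dudley} and transfer the entropy of $G$ to that of $X_N^{p_*}$ with $p_* := \min\{p, 2\}$. The key pointwise Lipschitz-type inequality is
$$
\bigl|(\Phi(|f(x)|))^{1/2} - (\Phi(|g(x)|))^{1/2}\bigr| \leq 2q\,\Lambda(x)\,|f(x) - g(x)|^{p_*/2},
$$
where $\Lambda(x) := \sup_{h \in X_N^\Phi}\bigl(\Phi(|h(x)|)/|h(x)|^{p_*}\bigr)^{1/2}$. It follows by writing $(\Phi(|f|))^{1/2} = \bigl((\Phi(|f|))^{1/p_*}\bigr)^{p_*/2}$, applying the elementary subadditivity $|a^{p_*/2} - b^{p_*/2}| \leq |a - b|^{p_*/2}$ (valid since $p_* \leq 2$), and then invoking Lemma \ref{lem0} for the function $\Psi := \Phi^{1/p_*}$; here $\Psi \in \mathrm{(Dec)}_{q/p_*}$ because $\Phi \in \mathrm{(Dec)}_q$ with $q \geq p_*$. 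Passing to entropy numbers yields $e_n(G, \|\cdot\|_{\infty, \mathbf{x}}) \leq 2q\,\Lambda\, e_n(X_N^\Phi, \|\cdot\|_{\infty, \mathbf{x}})^{p_*/2}$ with $\Lambda$ now the $\mathbf{x}$-dependent supremum of the pointwise factor.

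To control $\Lambda$, I combine three inputs. Lemma \ref{lem0.5} gives the inclusion $X_N^\Phi \subset R_\Phi(p_*)X_N^{p_*}$; the Nikolskii hypothesis $\|h\|_\infty \leq H\|h\|_2$ together with the interpolation $\|h\|_2^2 \leq \|h\|_{p_*}^{p_*}\|h\|_\infty^{2-p_*}$ upgrades to $\|h\|_\infty \leq H^{2/p_*}\|h\|_{p_*}$ on $X_N$, so $\sup_{h \in X_N^\Phi}\|h\|_\infty \leq R_\Phi(p_*)H^{2/p_*}$; and the almost monotonicity $\Phi \in \mathrm{(aInc)}_{p_*}$ then bounds $\Phi(t)/t^{p_*}$ on the interval $[0, R_\Phi(p_*)H^{2/p_*}]$ by $a_\Phi(p_*)\Phi(R_\Phi(p_*)H^{2/p_*})/(R_\Phi(p_*)H^{2/p_*})^{p_*}$. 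The same Orlicz-to-$L^{p_*}$ inclusion also transfers the entropy, giving $e_n(X_N^\Phi, \|\cdot\|_{\infty, \mathbf{x}}) \leq R_\Phi(p_*)\,e_n(X_N^{p_*}, \|\cdot\|_{\infty, \mathbf{x}})$; the two $R_\Phi(p_*)^{p_*}$ factors then cancel in the final product.

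The last step is to apply Lemma \ref{lem4} with $a = 1/p_*$ and $b = p_*/2$, so that $(2^{an}e_n)^b = 2^{n/2}e_n^{p_*/2}$, truncating the infinite Dudley sum to $n \leq [\log N]$ at the cost of an absolute constant. Squaring the resulting sum and taking expectation produces exactly the quantity $A$. The main delicate point is the combinatorial alignment of the Hölder exponent $p_*/2$ from the pointwise estimate with the $2^{n/2}$ weights in the $\gamma_2$ functional, and the tracking of the constants $R_\Phi(p_*)$, $a_\Phi(p_*)$, $q$, and the powers of $H$ through the cascade; once the $\Phi^{1/p_*}$ identity is recognized as the right way to apply Lemma \ref{lem0}, the rest is careful bookkeeping.
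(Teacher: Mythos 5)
Your proposal is correct and follows essentially the same route as the paper: apply Theorem~\ref{GMPT} to $G=\{(\Phi(|f|))^{1/2}: f\in X_N^\Phi\}$, use Dudley's bound \eqref{Dudley}, derive the pointwise Lipschitz-type estimate by writing $\Phi^{1/2}=(\Phi^{1/p_*})^{p_*/2}$ and applying the subadditivity $|a^{p_*/2}-b^{p_*/2}|\le|a-b|^{p_*/2}$ together with Lemma~\ref{lem0} for $\Phi^{1/p_*}\in{\rm(Dec)}_{q/p_*}$, control the Lipschitz factor via Lemma~\ref{lem0.5}, the Nikolskii interpolation $\|h\|_\infty\le H^{2/p_*}\|h\|_{p_*}$, and the almost-increasing assumption, pass entropy from $X_N^\Phi$ to $R_\Phi(p_*)X_N^{p_*}$ (with the $R_\Phi(p_*)^{p_*/2}$ factors cancelling), and truncate the Dudley sum to $n\le[\log N]$ via Lemma~\ref{lem4} with $a=1/p_*$, $b=p_*/2$. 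Every step matches the argument in Subsection~\ref{ssect3-2}, including the explicit observation that $\sup_{f\in X_N^\Phi}\rho_\Phi(f)\le 1$, which the paper uses implicitly.
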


To complete the proofs of Theorems \ref{cor4.1} and \ref{t3.2}, we will consider three separate cases in the following subsections:
$p\ge 2$, $p\in(1, 2)$, and $p=1$.

\subsection{Proof of Theorem \ref{cor4.1}: the case  $p\ge 2$}\label{ssect3-3}

We note that in this case $p_*=2$.

\begin{lemma}\label{cor2.1}
Let $p\ge 2$, $q\in[p, +\infty)$, $N\ge 1$, $H\ge 1$.
There is a numerical constant $c>0$ such that,
for every ${\bf \Phi}$-function $\Phi\in {\rm(aInc)}_p\cap{\rm(Dec)}_q$
and for every $N$-dimensional subspace $X_N\in NI_{2, \infty}(H)$, one has
$$
\mathbb{E}\Bigl(\sup\limits_{f\in X_N^\Phi}
\Bigl|\frac{1}{m}\sum\limits_{j=1}^m\Phi(|f(x_j)|) - \int_\Omega\Phi(|f|)\, d\mu\Bigr|
\Bigr)\le c(A+A^{1/2}),
$$
where
\begin{equation}\label{eq-A1}
A=\frac{q^2a_\Phi(2)\Phi\bigl(R_\Phi(2)H\bigr)\log m(\log 2N)^2}{m}.
\end{equation}
\end{lemma}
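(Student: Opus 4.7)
The plan is to apply Proposition~\ref{cor-key} with $p_*:=\min\{p,2\}=2$ (since $p\ge 2$). Inspecting the formula for $A$ there, and using that $R_\Phi(p_*)H^{2/p_*}=R_\Phi(2)H$ and $(e_n(\cdot))^{p_*/2}=e_n(\cdot)$, we see that Lemma~\ref{cor2.1} follows at once from the deterministic bound
$$
\sum_{n\le[\log N]} 2^{n/2}\,e_n\bigl(X_N^2,\|\cdot\|_{\infty,{\bf x}}\bigr)\;\le\; C\,H\sqrt{\log m}\,\log(2N) \qquad \text{for every choice of }{\bf x}\subset\Omega.
$$
Squaring this estimate removes the outer expectation in the definition of $A$, and, after inserting the resulting $H^2\log m(\log 2N)^2$ factor, the $H^{-2}$ prefactor cancels it precisely, leaving \eqref{eq-A1}.

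To establish the displayed inequality, fix an $L^2(\mu)$-orthonormal basis $u_1,\dots,u_N$ of $X_N$ and identify $f=\sum c_k u_k\in X_N$ with $c\in\mathbb{R}^N$; under this identification $X_N^2$ becomes the Euclidean unit ball $B_2^N$. The Nikolskii condition combined with the remark in Section~\ref{ssect2-3} yields $\sum_{k=1}^N u_k(x)^2\le H^2$ for every $x\in\Omega$. The key tool is the dual Sudakov (Pajor--Tomczak--Jaegermann) inequality: for any norm $\|\cdot\|$ on $\mathbb{R}^N$ and every $n\ge 0$,
$$
e_n\bigl(B_2^N,\|\cdot\|\bigr)\;\le\; C\,2^{-n/2}\,\mathbb{E}_g\Bigl\|\sum_{k=1}^N g_k e_k\Bigr\|,
$$
with $g_k$ i.i.d.\ standard Gaussian. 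Specialising to $\|\cdot\|=\|\cdot\|_{\infty,{\bf x}}$ for a fixed realization of ${\bf x}$, the task reduces to the Gaussian maximum $\mathbb{E}_g\max_{1\le j\le m}|G(x_j)|$, where $G:=\sum_k g_k u_k$. Each coordinate $G(x_j)$ is a centered Gaussian with variance $\sum_k u_k(x_j)^2\le H^2$, so the standard Gaussian maximal inequality gives $\mathbb{E}_g\max_{j\le m}|G(x_j)|\le C'H\sqrt{\log(2m)}$, uniformly in ${\bf x}$.

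Combining these two facts produces the pointwise-in-${\bf x}$ estimate $e_n(X_N^2,\|\cdot\|_{\infty,{\bf x}})\le C'' H\sqrt{\log(2m)}\,2^{-n/2}$ for every $n\ge 0$. Since $2^{n/2}\cdot 2^{-n/2}=1$, the sum telescopes into an arithmetic one:
$$
\sum_{n\le[\log N]} 2^{n/2}\,e_n\bigl(X_N^2,\|\cdot\|_{\infty,{\bf x}}\bigr)\;\le\; C'' H\sqrt{\log(2m)}\,\bigl([\log N]+1\bigr),
$$
which is exactly the required bound. Inserting it into Proposition~\ref{cor-key} gives \eqref{eq-A1}.

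The main obstacle, conceptually, is recognizing that the apparently random quantity $\mathbb{E}\gamma_2^2$ coming out of the Gin\'e--Zinn symmetrization admits a clean \emph{deterministic} upper bound via dual Sudakov, because the relevant Gaussian mean width $\mathbb{E}_g\|G\|_{\infty,{\bf x}}$ depends on ${\bf x}$ only through the diagonal values $\sum_k u_k^2(x_j)$, which are controlled uniformly by $H^2$ via Nikolskii. Once this is seen, the remaining steps are just Dudley summation and substitution.
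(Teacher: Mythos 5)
Your proof is correct and follows essentially the same route as the paper's: apply Proposition~\ref{cor-key} with $p_*=2$, bound the dyadic entropy sum by the dual Sudakov inequality combined with the Gaussian maximal inequality and the Nikolskii condition $\sum_k |u_k(x)|^2\le H^2$, and substitute the resulting deterministic-in-${\bf x}$ bound into $A$. The only cosmetic differences (writing $\log(2m)$ in place of $\log m$, and making explicit the identification of $X_N^2$ with $B_2^N$) do not change the argument.
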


\begin{proof}
By Proposition \ref{cor-key}, it is enough to estimate
$$
\sum\limits_{n\le [\log N]} 2^{n/2}e_n(X_N^2, \|\cdot\|_{\infty, {\bf x}})
$$
for a fixed set of points ${\bf x}=\{x_1, \ldots, x_m\}\subset \Omega$.
Let $\|h\|_{\infty, {\bf x}}:=\max\limits_{1\le j\le m}|h(x_j)|$.
Taking into account  the dual Sudakov bound for the
entropy numbers of the Euclidean ball (see \cite[L. 8.3.6]{Tal}),  we have
$$
e_n(X_N^2,\|\cdot\|_{\infty, {\bf x}})
\le
c\, 2^{-n/2}\mathbb{E}_g\bigl\|\sum_{k=1}^{N}g_ku_k\bigr\|_{\infty, X},
$$
where $c$ is a numerical constant,
$g=(g_1,\ldots, g_N)$ is a standard Gaussian random vector,
and $\{u_1,\ldots, u_N\}$ is any orthonormal basis in $X_N$.
Using now the
estimate for the expectation of the maximum of Gaussian random variables
(see \cite[Prop. 2.4.16]{Tal}) and the Nikolskii inequality,
we derive that
\begin{eqnarray*}
\mathbb{E}_g\bigl\|\sum_{k=1}^{N}g_ku_k\bigr\|_{\infty, {\bf x}}
&=&
\mathbb{E}_g\max\limits_{1\le j\le m}\bigl|\sum_{k=1}^{N}g_ku_k(x_j)\bigr|
\\
&\le& c_1 \max\limits_{1\le j\le m}\bigl(\sum\limits_{k=1}^N|u_k(x_j)|^2\bigr)^{1/2}(\log m)^{1/2}
\le  c_1 H(\log m)^{1/2}.
\end{eqnarray*}
Thus,
\begin{equation}\label{entropy}
e_n(X_N^2,\|\cdot\|_{\infty, {\bf x}})
\le
c_2 H2^{-n/2}(\log m)^{1/2}.
\end{equation}
and we have
$$
\sum\limits_{n\le [\log N]} 2^{n/2}e_n(X_N^2, \|\cdot\|_{\infty, {\bf x}})
\le
c_2 H(\log m)^{1/2} \log 2N.
$$
The lemma is proved.
\end{proof}

{\sc Proof of Theorem \ref{cor4.1} for $p\ge 2$.}
By Lemma \ref{lem5}, Lemma \ref{cor2.1},
and Chebyshev's inequality,
it is sufficient to choose $m$ such that
$$c(A+A^{1/2})\le \varepsilon/2,$$ where $A$ is given by  \eqref{eq-A1}.
Since $\varepsilon\in (0, 1/2]$, it is enough to see that $$A \varepsilon^{-2}\le (4c)^{-2}.$$
Let now
$$
m\ge C_\Phi\varepsilon^{-2}(\log\varepsilon^{-1}) \Phi(H)(\log 2N)^2 \log 2H^2
$$
with sufficiently large constant $C_\Phi\ge 2$, which will be specified later.

We note that $\Phi\in {\rm (Dec)}_q$ with some
$q:=q(\Phi)\in[1, +\infty)$. Then,
since 
 $R_\Phi(2)\ge1$ (see \eqref{eq-R}),
we have
$$
\Phi\bigl(R_\Phi(2)H\bigr)\le (R_\Phi(2))^q\Phi(H).
$$
Thus, using that $t\mapsto \frac{\log t}{t}$ is decreasing for $t\ge 3$, we have
$$
A \varepsilon^{-2}=\varepsilon^{-2}\frac{\log m}{m}
q^2a_\Phi(2)\Phi\bigl(R_\Phi(2)H\bigr)(\log 2N)^2
\le
\frac{q^2a_\Phi(2)(R_\Phi(2))^q}{C_\Phi} J,
$$
where
$$J:=\frac{\log C_\Phi + 2\log\varepsilon^{-1}+\log\log\varepsilon^{-1}+
\log\Phi(H) + 2\log\log 2N+\log\log 2H^2}
{(\log\varepsilon^{-1})\log 2H^2}.
$$
Noting that $H^2\ge N$ and
$\log\log t\le \log t$ for $t>2$,  we estimate
$$
J\le
\log C_\Phi+\frac{\log\Phi(H)} {\log 2H^2}+C
$$
with some numerical constant $C>0$.

Finally, in view of the inequality $\Phi(H)\le \Phi(1)H^q$ (note that $H\ge 1$),
we have
$$
\log\Phi(H)\le
\log(\Phi(1))+q/2\log H^2.
$$
Therefore,
$$
A \varepsilon^{-2}\le
\frac{q^2a_\Phi(2)(R_\Phi(2))^q}{C_\Phi}\Bigl(\log C_\Phi+ \log(\Phi(1)) + q/2 + C \Bigr).
$$
By choosing the constant $C_\Phi$ sufficiently large, we can make the right-hand side of the last inequality smaller than $(4c)^{-2}$. This implies that $A \varepsilon^{-2}\le (4c)^{-2}$, completing  the proof.

\qed

\subsection{Proof of Theorem \ref{cor4.1}: the case $p\in (1, 2)$}
\label{ssect3-4}

In this case we need the following lemma.

\begin{lemma}\label{cor3.2}
Let $p\in (1, 2)$, $q\in[p, +\infty)$, $N\ge 1$, $H\ge 1$.
There is a constant $c(p)>0$, depending only on $p$, such that,
for every ${\bf \Phi}$-function $\Phi\in {\rm (aInc)}_p\cap {\rm (Dec)}_q$
and for every $N$-dimensional subspace $X_N\in NI_{2, \infty}(H)$, one has
$$
\mathbb{E}\Bigl(\sup\limits_{f\in X_N^\Phi}
\Bigl|\frac{1}{m}\sum\limits_{j=1}^m\Phi(|f(x_j)|) - \int_\Omega\Phi(|f|)\, d\mu\Bigr|
\Bigr)\le c(p)(A+A^{1/2}),
$$
where
\begin{equation}\label{eq-A2}
A=\frac{q^2a_\Phi(p)
\Phi\bigl(R_\Phi(p) H^{2/p}\bigr)
(\log 2H^2)^{1-p/2}(\log m)^{p/2}(\log 2N)^2}{m}.
\end{equation}
\end{lemma}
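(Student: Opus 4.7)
My plan is to parallel the proof of Lemma \ref{cor2.1} with $p\in(1,2)$, by applying Proposition \ref{cor-key} (so that $p_*=p$) and then bounding the expected Dudley-type sum
$$
S := \mathbb{E}\Big[\Big(\sum_{n\le[\log N]} 2^{n/2}\,e_n\big(X_N^p, \|\cdot\|_{\infty,\mathbf{x}}\big)^{p/2}\Big)^{2}\Big].
$$
The target is $S \le C(p)\,H^2 (\log 2H^2)^{1-p/2}(\log m)^{p/2}(\log 2N)^2$; once this holds, the claim of the lemma is immediate from \eqref{eq-key}.

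I will combine two entropy estimates on $e_n(X_N^p, \|\cdot\|_{\infty,\mathbf{x}})$. From the Nikolskii-type inequality $\|h\|_\infty \le H^{2/p}\|h\|_p$ on $X_N$, derived by interpolating $\|h\|_\infty \le H\|h\|_2$ with the elementary bound $\|h\|_2 \le \|h\|_\infty^{1-p/2}\|h\|_p^{p/2}$, I obtain the diameter estimate $e_n(X_N^p, \|\cdot\|_{\infty,\mathbf{x}}) \le H^{2/p}$. The same interpolation yields the inclusion $X_N^p \subset H^{2/p-1}\,X_N^2$, and combining with the dual-Sudakov estimate \eqref{entropy} from the proof of Lemma \ref{cor2.1} gives the Sudakov-type bound
$e_n(X_N^p, \|\cdot\|_{\infty,\mathbf{x}}) \le c\,H^{2/p}\,2^{-n/2}(\log m)^{1/2}$.
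I then split the Dudley sum at a crossover level $n_0 \simeq \log\log(2H^2)$: the diameter block $\sum_{n\le n_0} 2^{n/2} H \lesssim H(\log 2H^2)^{1/2}$ contributes correctly after squaring, while the Sudakov block yields terms of order $H(\log m)^{p/4}\,2^{n(2-p)/4}$ whose sum over $n_0 < n \le [\log N]$ must be controlled by $C(p)\,H(\log m)^{p/4}(\log 2H^2)^{(2-p)/4}(\log 2N)$.

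The principal obstacle lies precisely in this Sudakov block. A naive summation of the geometric series $\sum 2^{n(2-p)/4}$ up to $n=[\log N]$ produces a polynomial factor $N^{(2-p)/4}$, which is strictly too weak in the regime $N \gg \log H^2$. Overcoming this requires a refinement of the entropy estimate in the range $\log\log(2H^2) < n \le [\log N]$ that exploits the genuine $L^p$ geometry of $X_N^p$ more carefully than the rough embedding $X_N^p \subset H^{2/p-1} X_N^2$ does, for instance via a $p$-process analog of the dual Sudakov inequality, or via a volumetric sharpening combined with the Nikolskii constraint, so that the individual terms $2^{n/2}e_n^{p/2}$ are controlled by a constant depending only on $H$ and logarithmic factors and the sum over roughly $\log 2N$ indices provides the remaining $(\log 2N)$-factor. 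Once such a refined large-$n$ entropy bound is in place, the resulting control on $S$ plugged into Proposition \ref{cor-key} yields the lemma.
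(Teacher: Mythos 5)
There is a genuine gap: you correctly identify the framework (apply Proposition \ref{cor-key} with $p_*=p$ and bound the Dudley-type sum), and you correctly diagnose that your two entropy estimates are insufficient. The diameter bound $e_n(X_N^p,\|\cdot\|_{\infty,\mathbf{x}})\le H^{2/p}$ and the Sudakov-type bound $e_n(X_N^p,\|\cdot\|_{\infty,\mathbf{x}})\le cH^{2/p}2^{-n/2}(\log m)^{1/2}$ obtained via $X_N^p\subset H^{2/p-1}X_N^2$ both decay at rate at most $2^{-n/2}$, so after raising to the power $p/2$ you get $2^{-np/4}$, which is slower than the $2^{n/2}$ weight in the Dudley sum, and the geometric series over $n\le[\log N]$ picks up the parasitic factor $N^{(2-p)/4}$. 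You then assert that ``a refinement of the entropy estimate'' is needed, speculate about possible sources, and conclude by saying that once such an estimate is ``in place'' the lemma follows. That is not a proof: the heart of the lemma is precisely that refined estimate, and your argument leaves it as an unproved black box.

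The missing ingredient is the sharp entropy bound the paper invokes (Lemma~4.10 from \cite{Kos}):
$$
e_n(X_N^p,\|\cdot\|_{\infty,\mathbf{x}})\le C(p)\,H^{2/p}\,2^{-n/p}\,(\log 2H^2)^{\frac{1}{p}-\frac{1}{2}}(\log m)^{1/2},
$$
whose decay rate $2^{-n/p}$ is strictly faster than the Sudakov rate $2^{-n/2}$ for $p<2$. Raising this to the power $p/2$ gives $2^{-n/2}$, which exactly cancels the $2^{n/2}$ weight, so the sum over $n\le[\log N]$ contributes only the benign factor $\log 2N$; squaring then yields $A$ as in \eqref{eq-A2} directly. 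Your proposal never reaches this; the ``$p$-process analog of dual Sudakov'' or ``volumetric sharpening'' you gesture at is exactly what \cite{Kos} supplies, and without it (or an equivalent) your argument cannot eliminate the polynomial loss $N^{(2-p)/4}$. To repair the proof, cite and use the entropy bound of \cite[Lemma~4.10]{Kos} in place of the $X_N^p\subset H^{2/p-1}X_N^2$ embedding.
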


\begin{proof}
In light of Proposition \ref{cor-key},
our goal is to estimate
$$
\sum\limits_{n\le [\log N]} 2^{n/2}\bigl(e_n(X_N^p, \|\cdot\|_{\infty, {\bf x}})\bigr)^{p/2}.
$$
It is known (see, e.g., \cite[L. 4.10]{Kos}) that
$$
e_n(X_N^p,\|\cdot\|_{\infty, {\bf x}})
\le C(p)  H^{2/p}2^{-n/p}
(\log 2H^2)^{\frac{1}{p}-\frac{1}{2}}(\log m)^{1/2}
$$
with a constant $C(p)>0$ depending only on $p$.
This yields
\begin{eqnarray*}
\sum\limits_{n\le [\log N]} 2^{n/2}\bigl(e_n(X_N^\Phi, \|\cdot\|_{\infty, {\bf x}})\bigr)^{p/2}
&\le&
C_1(p)  H
(\log 2H^2)^{\frac{1}{2}-\frac{p}{4}}(\log m)^{\frac{p}{4}}\log 2N.
\end{eqnarray*}
This completes  the proof.
\end{proof}

{\sc Proof of Theorem \ref{cor4.1} for $p\in (1, 2)$.}
The argument is similar to the one in the proof in the case $p\ge2$.
Taking into account  Lemma \ref{cor3.2},
 it suffices to take $m$ such that
$c(p)(A+A^{1/2})\le \varepsilon/2$
  and
  \begin{equation}\label{eqvsp}
      \varepsilon^{-2} A\le (4c(p))^{-2}.
  \end{equation}
Let now
$$
m\ge C_{\Phi, p}
\varepsilon^{-2}(\log\varepsilon^{-1})^{p/2} \Phi\bigl(H^{2/p}\bigr)(\log 2N)^2\log 2H^2
$$
with sufficiently large constant $C_{\Phi, p}\ge 2$, which will be specified later.
Since the function $t\mapsto \Phi(t)t^{-q}$ is decreasing on $(0, +\infty)$
with some $q:=q(\Phi)\in[1, +\infty)$ and $R_\Phi(p)\ge1$,
we have
$$
\Phi\bigl(R_\Phi(p)H^{2/p}\bigr)\le (R_\Phi(p))^q\Phi\bigl(H^{2/p}\bigr),
$$
which implies
\begin{eqnarray*}
\varepsilon^{-2}A&=&\varepsilon^{-2}\frac{(\log m)^{p/2}}{m}
q^2a_\Phi(p)
\Phi\bigl(R_\Phi(p) H^{2/p}\bigr)
(\log H^2)^{1-p/2}(\log 2N)^2
\\&\le&
\frac{q^2a_\Phi(p)(R_\Phi(p))^q}{C_{\Phi, p}}J,
\end{eqnarray*}
where
$$J=
\frac{\bigl(\log C_{\Phi, p} + 2\log\varepsilon^{-1}+\frac{p}{2}\log\log\varepsilon^{-1}+
\log\Phi\bigl(H^{2/p}\bigr) + 2\log\log 2N+\log\log 2H^2\bigr)^{p/2}}
{(\log\varepsilon^{-1})^{p/2}(\log 2 H^2)^{p/2}}.
$$
It is easy to see that, for some  positive $C$,

$$J^{2/p}\le
\log C_{\Phi, p}
+\frac{\log\Phi\bigl(H^{2/p}\bigr)}{\log 2 H^2}+C.
$$
Further, taking into account that
$\Phi\bigl(H^{2/p}\bigr)\le \Phi(1)H^{2q/p}$, we  have
$$
\log\Phi\bigl(H^{2/p}\bigr)\le
\log(\Phi(1)) + q/p\log H^2.
$$
Therefore,
$$
\varepsilon^{-2}A
\le
\frac{q^2a_\Phi(p)(R_\Phi(p))^q}{C_{\Phi, p}}\bigl(\log C_{\Phi, p} + \log(\Phi(1)) +
q/p + C\bigr)^{p/2}.
$$
Taking  $C_{\Phi, p}$ large  enough, we arrive at
\eqref{eqvsp}.
\qed

\subsection{Proof of Theorem \ref{t3.2}}\label{ssect3-5}

Let $q:=q(\Phi)\in[1, +\infty)$ be such that $\Phi\in {\rm (Dec)}_q$.
Since
$$
\|f\|_2^2\le \|f\|_\infty\|f\|_1\le H\|f\|_2\|f\|_1\quad \forall f\in X_N
$$
we have $X_N^1\subset HX_N^2$.
Thus,
$$
e_n(X_N^1,\|\cdot\|_{\infty, {\bf x}})
\le
H e_n(X_N^2,\|\cdot\|_{\infty, {\bf x}})
\le
c H^2 2^{-n/2}(\log m_0)^{1/2}.
$$
for every fixed set of points ${\bf x}:=\{x_1, \ldots, x_{m_0}\}$,
where we have used estimate \eqref{entropy}.
Using Proposition \ref{cor-key}, we derive
$$
\mathbb{E}\Bigl(\sup\limits_{f\in X_N^\Phi}
\Bigl|\frac{1}{m_0}\sum\limits_{j=1}^{m_0}\Phi(|f(x_j)|) - \int_\Omega\Phi(|f|)\, d\mu\Bigr|
\Bigr)\le c_1(A+A^{1/2}),
$$
where
$$
A=\frac{1}{m_0}q^2a_\Phi(1)
\Phi\bigl(R_\Phi(1) H^2\bigr)N^{1/2}(\log m_0)^{1/2}.
$$
Using
the fact that the function $t\mapsto \Phi(t)t^{-q}$ is decreasing
and
$R_\Phi(1)H^2\ge1$,
we note that
\begin{equation}\label{eq-dec}
\Phi\bigl(R_\Phi(1)H^2\bigr)\le (R_\Phi(1))^qH^{2q}\Phi(1).
\end{equation}
Therefore, since $
\log m_0\le m_0$,
$$
\mathbb{E}\Bigl(\sup\limits_{f\in X_N^\Phi}
\Bigl|\frac{1}{m_0}\sum\limits_{j=1}^{m_0}\Phi(|f(x_j)|) - \int_\Omega\Phi(|f|)\, d\mu\Bigr|
\Bigr)\le c_2(\Phi)(A_1+A_1^{1/2}),
$$
where
$$
A_1=\frac{H^{2q}N^{1/2}}{\sqrt{m_0}}.
$$
On the other hand,  Corollary \ref{cor2.1} implies
that
$$
\mathbb{E}\Bigl(\sup\limits_{f\in X_N^2}
\Bigl|\frac{1}{m_0}\sum\limits_{j=1}^{m_0}|f(x_j)|^2 - \int_\Omega|f|^2\, d\mu\Bigr|
\Bigr)\le c_3(A_2+A_2^{1/2}),
$$
where
$$
A_2=\frac{H\log m_0(\log 2N)^2}{m_0}\le 16\frac{HN}{\sqrt{m_0}}.
$$
Thus, for any $\alpha\in(0, 1/2]$,
there exist a number $m_0\le c_4(\Phi)\alpha^{-4} H^{4q}N^2$ and a set of points
${\bf x}_0:=\{x_1^0, \ldots, x_{m_0}^0\}\subset \Omega$
such that
\begin{equation}\label{eq3.2}
a_\Phi(1)^{-1}(1-\alpha)\|f\|_{L^\Phi(\mu)}\le \|f\|_{L^\Phi(\nu)}\le
a_\Phi(1)(1+\alpha)\|f\|_{L^\Phi(\mu)}
\quad \forall f\in X_N
\end{equation}
and
$$
\frac{1}{2}\|f\|_{L^2(\mu)}\le \|f\|_{L^2(\nu)}\le\frac{3}{2}\|f\|_{L^2(\mu)}
\quad \forall f\in X_N,
$$
where $\nu = \frac{1}{m_0}\sum\limits_{j=1}^{m_0}\delta_{x_j^0}$.
Then
$$
\|f\|_{L^\infty(\nu)}=\sup\limits_{x\in \{x_1, \ldots, x_{m_0}\}}|f(x)|
\le \|f\|_\infty\le H\|f\|_2\le 2H\|f\|_{L^2(\nu)}.
$$
Let $X_N^\Phi(\nu)$, $X_N^2(\nu)$, and $X_N^1(\nu)$ be the (open) unit balls in $X_N$
with respect to the norms of spaces $L^\Phi(\nu)$, $L^2(\nu)$, and $L^1(\nu)$, respectively.
In light of  estimate \eqref{entropy},
$$
e_n(X_N^2(\nu),\|\cdot\|_{L^\infty(\nu)})\le
c H 2^{-n/2}(\log m_0)^{1/2}.
$$
We now note that Lemma 3.3 from \cite{DPSTT2} implies that
$$
e_n(X_N^1(\nu), \|\cdot\|_{L^\infty(\nu)})\le C R 2^{-n}\quad \forall n\in \mathbb{N}
$$
if
$$
e_n(X_N^2(\nu), \|\cdot\|_{L^\infty(\nu)})\le (R 2^{-n})^{1/2}\quad \forall n\in \mathbb{N}.
$$
Hence,
$$
e_n(X_N^1(\nu), \|\cdot\|_{L^\infty(\nu)})\le c_5(\Phi) H^2(\log \alpha^{-1}) (\log 2H^2) 2^{-n}\quad \forall n\in \mathbb{N},
$$
where we have used the estimate $H\ge \sqrt{N}$.
We now apply Proposition \ref{cor-key} with the set ${\bf x}_0$ and with the measure $\nu$.
We note that for any ${\bf x}\subset {\bf x}_0$, one has
\begin{eqnarray*}
\sum\limits_{n\le [\log N]} 2^{n/2}\bigl(e_n(X_N^1(\nu), \|\cdot\|_{\infty, {\bf x}})\bigr)^{1/2}
&\le&
\sum\limits_{n\le [\log N]} 2^{n/2}\bigl(e_n(X_N^1(\nu), \|\cdot\|_{\infty, {\bf x}_0})\bigr)^{1/2}
\\
&\le&
c_6(\Phi)H(\log \alpha^{-1})^{1/2} (\log 2H^2)^{1/2} (\log 2N).
\end{eqnarray*}
Thus, we obtain
$$
\mathbb{E}\Bigl(\sup\limits_{f\in X_N^\Phi(\nu)}
\Bigl|\frac{1}{m}\sum\limits_{j=1}^{m}\Phi(|f(x_j)|) - \int_{{\bf x}_0}\Phi(|f|)\, d\nu\Bigr|
\Bigr)\le c_7(\Phi)(A+A^{1/2}),
$$
where
$$
A=\frac{(\log \alpha^{-1})\Phi\bigl(H^2\bigr)(\log 2H^2) (\log 2N)^2}{m}.
$$
Therefore, for any $\beta\in(0, 1/2]$, there exists
$m\le c_8(\Phi)\beta^{-2} (\log \alpha^{-1})\Phi\bigl(H^2\bigr)(\log 2H^2) (\log 2N)^2$
such that $c_7(\Phi)(A+A^{1/2})\le \beta/2$. For this $m$,
there is a set of points ${\bf x}:=\{x_1, \ldots, x_m\}\subset {\bf x}_0$
such that
$$
a_\Phi(1)^{-1}(1-\beta)\|f\|_{L^\Phi(\nu)}\le \|f\|_{\Phi, {\bf x}}\le
a_\Phi(1)(1+\beta)\|f\|_{L^\Phi(\nu)}
\quad \forall f\in X_N.
$$
Finally, taking $\alpha = \beta= \varepsilon/3$
and combining this bound with \eqref{eq3.2}, we obtain the required statement.
\qed

\vspace{0.2cm}

\section{Descritization under relaxed assumptions on ${\bf \Phi}$-functions}
\label{sect4}

In this section, we show  that the conditions on the ${\bf \Phi}$-function in Theorem \ref{cor4.1} can be relaxed while still achieving an effective discretization result. Specifically,
we demonstrate   that the assumption $\Phi\in {\rm (aInc)}_p\cap{\rm (Dec)}$ can be replaced with the less restrictive condition 
$\Phi\in {\rm (aInc)}_p(\infty)\cap{\rm (aDec)}(\infty)$, yielding the same discretization result up to constant factors.

\begin{theorem}\label{cor4.2}
Let $p\in (1, \infty)$, $N\ge 1$, $H\ge 1$.
For every ${\bf \Phi}$-function $\Phi$, satisfying the condition
$\Phi\in {\rm (aInc)}_p(\infty)\cap{\rm (aDec)}(\infty)$,
there exist positive constants $c:=c(\Phi, p)$, $C:=C(\Phi, p)$,
$C_1:=C_1(\Phi, p)$, and $C_2:=C_2(\Phi, p)$,
depending only on $\Phi$ and $p$, such that
for every $N$-dimensional subspace $X_N\in NI_{2, \infty}(H)$ and for every
$$m\ge C
\Phi\bigl(cH^{\frac{2}{\min\{p, 2\}}}\bigr)(\log 2N)^2\log 2H^2,$$
there is a subset ${\bf x}:=\{x_1, \ldots, x_m\}\subset \Omega$
of cardinality $m$
such that
$$
C_1\|f\|_\Phi\le \|f\|_{\Phi, {\bf x}}\le C_2\|f\|_\Phi
\quad \forall f\in X_N.
$$
\end{theorem}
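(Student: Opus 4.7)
The strategy is to reduce to Theorem~\ref{cor4.1} by replacing $\Phi$ with an equivalent $\Phi$-function $\hat\Phi$ satisfying the stronger global conditions ${\rm (Inc)}_p \cap {\rm (Dec)}_q$, applying Theorem~\ref{cor4.1} to $\hat\Phi$, and transferring the resulting discretization back through a comparison of Luxembourg norms. Since $C_1, C_2$ need not be close to $1$ and $c$ is allowed to depend on $(\Phi, p)$, all equivalence-type multiplicative losses can be freely absorbed into the final constants. To build $\hat\Phi$, let $t_* \ge 0$ be a common threshold beyond which both monotonicity hypotheses hold, and first set
\begin{equation*}
\tilde\Phi(t) := \Phi(t_*)(t/t_*)^p \text{ for } t \in [0, t_*],\qquad \tilde\Phi(t) := \Phi(t) \text{ for } t > t_*,
\end{equation*}
which directly lies in ${\rm (aInc)}_p \cap {\rm (aDec)}_q$ globally. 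Combining Lemma~\ref{lem-equiv} with the symmetric envelope construction $t \mapsto t^q \inf_{0 < s \le t} \tilde\Phi(s) s^{-q}$, used to upgrade ``almost-decreasing'' to ``decreasing'', produces an equivalent $\hat\Phi \in {\rm (Inc)}_p \cap {\rm (Dec)}_q$ to which Theorem~\ref{cor4.1} is directly applicable.

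The key technical step is the norm equivalence $\|f\|_\Phi \asymp \|f\|_{\hat\Phi}$ with constants depending only on $(\Phi, p)$, valid uniformly over all probability measures on $\Omega$, and hence over discrete uniform measures in particular. I would establish this by splitting $\rho_\Phi(f/\lambda) = \int \Phi(|f|/\lambda)\, d\mu$ according to whether $|f|/\lambda \le t_*$ or $|f|/\lambda > t_*$. On the second region $\Phi$ coincides with $\tilde\Phi$ (hence with $\hat\Phi$ up to the global equivalence), and ${\rm (aInc)}_p(\infty)$ yields the scaling $\Phi(|f|/\lambda) \le C\lambda^{-p} \Phi(|f|)$ with $C = C(\Phi,p)$. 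On the first region, the global ${\rm (aInc)}_1$ property enjoyed by every $\Phi$-function gives the linear bound $\Phi(|f|/\lambda) \le a_\Phi(1)\Phi(t_*)t_*^{-1}|f|/\lambda$; its $L^1$-integral is controlled by the $L^p$-norm via Jensen's inequality and, in turn, by the Orlicz norm through a variant of Lemma~\ref{lem0.5} that remains valid under ${\rm (aInc)}_p(\infty)$ (using $\Phi(t) \ge c\,\Phi(t_*)(t/t_*)^p$ for $t \ge t_*$, which follows from almost-increasingness of $\Phi(t)/t^p$ on $(t_*, \infty)$). The reverse direction is entirely symmetric.

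With the uniform norm comparison in hand, Theorem~\ref{cor4.1} applied to $\hat\Phi$ with, say, $\varepsilon = 1/2$ produces a sampling set of the claimed cardinality giving $\hat\Phi$-norm discretization with constants $1/2$ and $3/2$; the norm equivalence then immediately converts this to a $\Phi$-norm discretization with constants $C_1, C_2$ depending only on $(\Phi, p)$, while the factor $c$ in $\Phi(cH^{2/\min\{p,2\}})$ absorbs the equivalence constant between $\hat\Phi$ and $\Phi$ at the argument $H^{2/\min\{p,2\}}$. The main obstacle will be the quantitative norm comparison: one must carefully track all constants through the splitting argument to confirm they depend only on $\Phi$ and $p$, with no hidden $H$- or $N$-dependence. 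Once this is established, everything else is routine bookkeeping.
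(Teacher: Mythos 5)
Your proposal follows the same route as the paper: regularize $\Phi$ near zero to force global (almost-)monotonicity, upgrade to a genuinely ${\rm (Dec)}$ function so that Theorem~\ref{cor4.1} applies, and then transfer the discretization back through a norm comparison that is uniform over all probability measures. Two small points of execution differ from the paper, and one of them deserves care. First, you splice at $t=t_*$, whereas the paper splices at $t_0=2t_*$; since almost-monotonicity is hypothesized only on the \emph{open} interval $(t_*,\infty)$, splicing at $t_*$ does not justify the crossing comparison $\tilde\Phi(t)t^{-q}\lesssim\tilde\Phi(s)s^{-q}$ for $s<t_*<t$ unless $\Phi$ happens to be right-continuous at $t_*$; choosing $t_0>t_*$, the paper obtains this cleanly via $\psi(t)\le b\,\psi(t_0)\le b\,\psi(s)$. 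Second, combining Lemma~\ref{lem-equiv} with the envelope $t\mapsto t^q\inf_{0<s\le t}\tilde\Phi(s)s^{-q}$ does yield ${\rm (Dec)}_q$, and ${\rm (aInc)}_p$ is preserved under the equivalence, but the strict ${\rm (Inc)}_p$ you claim for $\hat\Phi$ is not obvious (the envelope multiplies by a decreasing factor and can destroy increasingness of $\hat\Phi(t)t^{-p}$); this is harmless since Theorem~\ref{cor4.1} only needs ${\rm (aInc)}_p\cap{\rm (Dec)}$, but note the paper sidesteps it by convexifying with Lemma~\ref{lem-equiv} and then invoking that a convex ${\bf\Phi}$-function in ${\rm (aDec)}$ is automatically in ${\rm (Dec)}$. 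Finally, for the norm transfer the paper uses the simpler observation that on $\{|f|/\lambda\le t_0\}$ one has $\Phi(|f|/\lambda)\le\Phi(t_0)$ pointwise, so the contribution of this region to $\rho_\Phi$ is at most $\Phi(t_0)$ against any probability measure; your Jensen/Lemma~\ref{lem0.5} detour works but is more elaborate than necessary.
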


\begin{proof}
Since $\Phi\in {\rm (aInc)}_p(\infty)\cap{\rm (aDec)}(\infty)$,
there are  $t_*:=t_*(\Phi)\in (0, +\infty)$
and  $q:=q(\Phi)> p$
such that the mapping
$t\mapsto \Phi(t)t^{-p}$ is almost increasing with a constant $a:=a_\Phi(p)$
on $(t_*, \infty)$ and $t\mapsto \Phi(t)t^{-q}$ is almost decreasing with a constant $b:=b_\Phi(q)$
on $(t_*, \infty)$.
Let $t_0=2t_*$.
We consider a new ${\bf \Phi}$-function $\Phi_1$ such that
$$
\Phi_1(t): =
\left\{
\begin{aligned}
  \frac{\Phi(t_0)}{t_0^p}t^p, &\ t\in(0, t_0] \\
  \Phi(t), &\ t\ge t_0 \\
\end{aligned}
\right..
$$
Now we claim  that  $\Phi_1\in {\rm (aInc)}_p\cap{\rm (aDec)}$.
Indeed, let $0<s<t$ and  $\varphi(\tau):=\Phi_1(\tau)\tau^{-p}$.
If $t\le t_0$, then $\varphi(s)=\varphi(t)\le a\varphi(t)$.
If $s\ge t_0$, then $\varphi(s) = \Phi(s)s^{-p}\le a\Phi(t)t^{-p}=a\varphi(t)$.
If $s< t_0 <t$, then $\varphi(s)= \varphi(t_0)\le a\varphi(t)$.
Let now $\psi(\tau):=\Phi_1(\tau)\tau^{-q}$.
If $t\le t_0$, then $\psi(t)=ct^{p-q}\le cs^{p-q}=\psi(s)\le b\psi(s)$ where $c:=\frac{\Phi(t_0)}{t_0^p}$.
If $s\ge t_0$, then $\psi(t) = \Phi(t)t^{-q}\le b\Phi(s)s^{-q}=b\psi(s)$.
If $s< t_0 <t$, then $\psi(t) \le b\psi(t_0)\le b\psi(s)$, completing the proof of the claim.

Using  Lemma~\ref{lem-equiv},
we can find
a convex ${\bf \Phi}$-function
$\Phi_2\in {\bf \Phi}_c$, which is equivalent to $\Phi_1$.
By \cite[L.~2.1.9]{HH19}, we have $\Phi_2\in {\rm (aInc)}_p\cap{\rm (aDec)}$.
Moreover, for a convex ${\bf \Phi}$-function, the conditions $\Phi_2\in {\rm (aDec)}$
and $\Phi_2\in {\rm (Dec)}$ are equivalent (see \cite[L.~2.2.6]{HH19}).
Thus, $\Phi_2\in {\rm (aInc)}_p\cap{\rm (Dec)}$, and
by Theorem \ref{cor4.1}, for every
$$
m\ge C_{\Phi_2, p}
\Phi_2\bigl(H^{\frac{2}{\min\{p, 2\}}}\bigr)(\log 2N)^2\log 2H^2,
$$
there is a subset ${\bf x}:=\{x_1, \ldots, x_m\}\subset \Omega$
of cardinality $m$
such that
$$
\frac{1}{2}\|f\|_{\Phi_2}\le \|f\|_{\Phi_2, {\bf x}}\le \frac{3}{2}\|f\|_{\Phi_2}
\quad \forall f\in X_N.
$$
Let $c=c(\Phi, p): = \max\{L, t_0\}$, where $L$ is the constant from the
equivalence
$\Phi_1(L^{-1}x)\le\Phi_2(x)\le \Phi_1(L x)$.
Then
$$
\Phi_2\bigl(H^{\frac{2}{\min\{p, 2\}}}\bigr)
\le \Phi_1\bigl(LH^{\frac{2}{\min\{p, 2\}}}\bigr)
\le \Phi_1\bigl(cH^{\frac{2}{\min\{p, 2\}}}\bigr)
= \Phi\bigl(cH^{\frac{2}{\min\{p, 2\}}}\bigr).
$$
Finally, we note that for any probability measure $\nu$,
one has
\begin{eqnarray*}
&&\int_\Omega\Phi_2\Bigl(\frac{|f|}{2L(\Phi(t_0) + 1) \|f\|_{L^\Phi(\nu)}}\Bigr)\, d\nu
\le
\frac{1}{\Phi(t_0) + 1}
\int_\Omega\Phi_2\Bigl(\frac{|f|}{2L\|f\|_{L^\Phi(\nu)}}\Bigr)\, d\nu
\\&&\qquad\qquad\qquad\qquad\qquad\qquad\qquad\quad
\le
\frac{1}{\Phi(t_0) + 1}
\int_\Omega\Phi_1\Bigl(\frac{|f|}{2\|f\|_{L^\Phi(\nu)}}\Bigr)\, d\nu
\\
&\le&
\frac{1}{\Phi(t_0) + 1}
\Bigl(\Phi_1(t_0)\nu\bigl(|f|(2\|f\|_{L^\Phi(\nu)})^{-1}\le t_0\bigr)
+ \int_\Omega\Phi\Bigl(\frac{|f|}{2\|f\|_{L^\Phi(\nu)}}\Bigr)\, d\nu\Bigr)\le1,
\end{eqnarray*}
where in the first inequality we have used the convexity of $\Phi_2$,
in the second one the equivalence between $\Phi_1$ and $\Phi_2$,
and in the last one the first implication in \eqref{eq-ball}.
Therefore, we arrive at  $\|f\|_{L^{\Phi_2}(\nu)}\le 2L(\Phi(t_0) + 1) \|f\|_{L^\Phi(\nu)}$.

Similarly,
\begin{eqnarray*}
&&\int_\Omega\Phi\Bigl(\frac{|f|}{2La_\Phi(1)(\Phi(t_0) + 1) \|f\|_{L^{\Phi_2}(\nu)}}\Bigr)\, d\nu
\le
\frac{1}{\Phi(t_0) + 1}
\int_\Omega\Phi\Bigl(\frac{|f|}{2L\|f\|_{L^{\Phi_2}(\nu)}}\Bigr)\, d\nu
\\
&\le&
\frac{1}{\Phi(t_0) + 1}
\Bigl(\Phi(t_0)\nu\bigl(|f|(2L\|f\|_{L^{\Phi_2}(\nu)})^{-1}\le t_0\bigr)
+ \int_\Omega\Phi_1\Bigl(\frac{|f|}{2L\|f\|_{L^{\Phi_2}(\nu)}}\Bigr)\, d\nu\Bigr)
\\
&\le&
\frac{1}{\Phi(t_0) + 1}
\Bigl(\Phi(t_0)
+ \int_\Omega\Phi_2\Bigl(\frac{|f|}{2\|f\|_{L^{\Phi_2}(\nu)}}\Bigr)\, d\nu\Bigr)
\le1,
\end{eqnarray*}
where in the first inequality we have used \eqref{eq-est}.
Thus,
$\|f\|_{L^\Phi(\nu)}\le 2a_\Phi(1)L(\Phi(t_0) + 1) \|f\|_{L^{\Phi_2}(\nu)}$.
Therefore, for every
$$
m\ge C(\Phi, p)
\Phi\bigl(cH^{\frac{2}{\min\{p, 2\}}}\bigr)(\log 2N)^2\log 2H^2 \;\;\;\hbox{ with } \;\;\;
C(\Phi, p) = C_{\Phi_2, p},
$$
there is a subset ${\bf x}:=\{x_1, \ldots, x_m\}\subset \Omega$
of cardinality $m$
such that, for every $f\in X_N$, one has
\begin{eqnarray*}
\frac{1}{8a_\Phi(1)L^2(\Phi(t_0) + 1)^2}\|f\|_\Phi&\le&
\frac{1}{4L(\Phi(t_0) + 1)}\|f\|_{\Phi_2}\le \frac{1}{2L(\Phi(t_0) + 1)}\|f\|_{\Phi_2, {\bf x}}
\\ &\le&
\|f\|_{\Phi, {\bf x}}\le
2a_\Phi(1)L(\Phi(t_0) + 1)\|f\|_{\Phi_2, {\bf x}}\\ &\le&
3a_\Phi(1)L(\Phi(t_0) + 1)\|f\|_{\Phi_2}
\le 6a_\Phi(1)L^2(\Phi(t_0) + 1)^2\|f\|_{\Phi}.
\end{eqnarray*}
The proof  is now complete.
\end{proof}

\vspace{0.2cm}

\section{One-sided weighted discretization for an arbitrary subspace} \label{sect6}

The aim of this section is to obtain one-sided discretization inequalities without additional conditions on the subspace $X_N$,
that is,  we do not assume that  Nikolskii's inequality \eqref{eq-Nik} holds.

\subsection{Bounds of integral Orlicz norms by discrete Orlicz norms}
\label{ssect6-1}
We start with the following analogue of Lewis's change of density lemma
(see \cite{Lew78} and \cite{SZ01}).

\begin{lemma}\label{lem5.1}
Let ${\bf x}=\{x_1, \ldots, x_m\}$,  $\nu:=\sum\limits_{j=1}^{m}\lambda_j\delta_{x_j}$ be a discrete positive measure on ${\bf x}$,
and  $X_N$ be an $N$-dimensional subspace of functions on ${\bf x}$.
Assume that $\Phi$ is an ${\bf \Phi}$-prefunction with continuous derivative $\varphi$.
 There exist a constant $c>0$ and a basis $v_1, \ldots, v_N$
in $X_N$ such that
$$
\int_{{\bf x}'} \varphi\Bigl(
\Bigl(\sum\limits_{k=1}^{N}|v_k(x)|^2\Bigr)^{1/2}\Bigr)
\Bigl(\sum\limits_{k=1}^{N}|v_k(x)|^2\Bigr)^{-1/2}
v_{r}(x)v_{r'}(x)\, \nu(dx) = c\delta_{r, r'}
$$
and
$$
\int_{{\bf x}} \Phi\Bigl(
\Bigl(\sum\limits_{k=1}^{N}|v_k(x)|^2\Bigr)^{1/2}\Bigr)\, \nu(dx)=1,
$$
where ${\bf x}':=\bigl\{x\in {\bf x}\colon \sum\limits_{k=1}^{N}|v_k(x)|^2\ne 0\bigr\}$.
\end{lemma}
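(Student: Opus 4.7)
The plan is to adapt the classical Lewis-type optimization argument: parametrize bases of $X_N$ by invertible matrices acting on a fixed starting basis, and obtain the desired basis as the maximizer of $|\det A|$ subject to a constraint built from $\Phi$, so that both required identities emerge as first-order optimality conditions.

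I first reduce to the situation where the evaluation vector $u(x) := (u_1(x),\ldots,u_N(x))$ is nonzero for every $x\in\mathbf{x}$. The set $\mathbf{x}'$ depends only on the subspace $X_N$, since $\sum_k|v_k(x)|^2=0$ holds exactly when every $f\in X_N$ vanishes at $x$. After restricting attention to $\mathbf{x}'$, any basis satisfies $u(x)\ne 0$ everywhere on $\mathbf{x}$; moreover, the family $\{u(x)\}_{x\in\mathbf{x}}$ spans the ambient scalar space, because otherwise some nontrivial linear combination of the $u_k$ would vanish identically on $\mathbf{x}$.

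I then fix a starting basis $u_1,\ldots,u_N$ of $X_N$ and parametrize candidate bases by $v_r:=\sum_k A_{rk}u_k$ with $A\in GL_N$, so that $v(x)=Au(x)$ and $\sum_k|v_k(x)|^2=\|Au(x)\|^2$. Define
$$G(A) := \int_{\mathbf{x}} \Phi(\|Au(x)\|)\,\nu(dx),\qquad \mathcal{K}:=\{A\in M_N\colon G(A)\le 1\},$$
and consider maximizing $|\det A|$ over $\mathcal{K}$. The set $\mathcal{K}$ is closed by continuity of $\Phi$. It is bounded because the spanning property of $\{u(x)\}$ and the equivalence of norms on $M_N$ give $\max_{x\in\mathbf{x}}\|Au(x)\|\ge c\|A\|$, so $G(A)\le 1$ combined with $\Phi(t)\to\infty$ controls $\|A\|$. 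Rescaling any starting basis shows $\mathcal{K}$ meets $\{|\det|>0\}$, hence $|\det\cdot|$ attains a positive maximum at some $A^*\in\mathcal{K}$. By the homogeneity $|\det(tA)|=t^N|\det A|$ and monotonicity of $G$ in the radial direction, the maximum must lie on the boundary $G(A^*)=1$, which yields the required normalization identity upon setting $v:=A^*u$.

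The orthogonality identity arises from the Lagrange multiplier equation at $A^*$. Since $\|Au(x)\|>0$ throughout $\mathbf{x}$ and $\Phi\in C^1$ with $\varphi=\Phi'$ continuous, both $A\mapsto\log|\det A|$ and $G$ are smooth at $A^*$ with
$$\frac{\partial \log|\det A|}{\partial A_{sl}}=(A^{-1})_{ls},\qquad \frac{\partial G}{\partial A_{sl}}=\int_{\mathbf{x}}\varphi(\|Au\|)\,\frac{(Au)_s\,u_l}{\|Au\|}\,\nu(dx).$$
So there exists $\lambda>0$ with $((A^*)^{-1})_{ls}=\lambda\int\varphi(\|v\|)v_s u_l\|v\|^{-1}\,d\nu$. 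Multiplying by $A^*_{rl}$ and summing over $l$ collapses $\sum_l A^*_{rl}u_l$ into $v_r$ and yields $\delta_{rs}=\lambda\int\varphi(\|v\|)v_rv_s\|v\|^{-1}\,d\nu$, which is precisely the desired identity with $c=1/\lambda$. The main obstacle I anticipate is verifying compactness of $\mathcal{K}$, because the absence of convexity or a specific growth rate for $\Phi$ rules out standard $L^p$-style bounds; the spanning property of $\{u(x)\}_{x\in\mathbf{x}'}$ established in the reduction is what makes this step go through, and thereafter the critical-point calculation is routine.
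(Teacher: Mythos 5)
Your proposal is correct and follows essentially the same Lewis/Schechtman--Zvavitch optimization argument as the paper: reduce to $\mathbf{x}'$, parametrize bases by matrices, maximize $\det$ under the constraint $\int\Phi(\|Au\|)\,d\nu=1$, and read off both identities from the Lagrange multiplier condition. The only cosmetic differences are that you maximize over the sublevel set $\{G\le 1\}$ and argue the optimum lies on the boundary, whereas the paper works directly on $\{G=1\}$, and you give a more explicit justification of compactness via the spanning property; neither changes the substance of the argument.
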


\begin{proof}
The proof repeats the argument
by Schechtman and  Zvavitch in \cite[Th. 2.1]{SZ01}.
Without loss of generality, we assume that
for each point $x_j$ there is a function $f\in X_N$
such that $f(x_j)\ne 0$.
Let $\{u_1, \ldots, u_N\}$ be a basis in $X_N$.
For an $N\times N$ matrix $B:=(b_{k, l})$, we define
$$
G(B):= \int_{{\bf x}} \Phi\Bigl(
\Bigl(\sum\limits_{k=1}^{N}\Bigl|\sum_{l=1}^{N}
b_{k, l}u_l(x)\Bigr|^2\Bigr)^{1/2}\Bigr)\, \nu(dx)=
\sum_{j=1}^{m}\lambda_j\Phi\Bigl(
\Bigl(\sum\limits_{k=1}^{N}\Bigl|\sum_{l=1}^{N}
b_{k, l}u_l(x_j)\Bigr|^2\Bigr)^{1/2}\Bigr).
$$
Clearly,  $G$ is a continuous function.
Since the set $\{B\colon G(B)=1\}$ is compact,
there exists
a matrix $A:=(a_{k, l})$  such that the maximum value of $\det B$
under the condition $G(B)=1$ is attained on $A$.
For $t\ge0$, one has
$$
G(t I) =
\sum_{j=1}^{m}\lambda_j\Phi\Bigl(t
\Bigl(\sum\limits_{k=1}^{N}|u_k(x_j)|^2\Bigr)^{1/2}\Bigr).
$$
Moreover, $t\mapsto G(tI)$ is a continuous function on
$[0, +\infty)$, $G(0) = 0$, and
$\lim\limits_{t\to +\infty}G(tI) = +\infty$.
Thus, there is $t_0\in(0, +\infty)$ such that $G(t_0I)=1$, which implies that  $\det A\ge t_0^N>0$.
Therefore, the functions
$v_k:=\sum_{l=1}^{N} a_{k, l}u_l$ form a basis in $X_N$.
In particular,
$$
\sum\limits_{k=1}^{N}\Bigl|\sum_{l=1}^{N}
a_{k, l}u_l(x_j)\Bigr|^2 = \sum\limits_{k=1}^{N}|v_k(x_j)|^2>0
$$
for every $j\in \{1, \ldots, m\}$ and therefore, the function $G$
is continuously differentiable on some neighborhood
of the point $A$.
We note that, for fixed $k_0$ and $l_0$,
$$
\frac{\partial}{\partial b_{k_0l_0}}G(B)
=
\int_{{\bf x}}\varphi\Bigl(
\Bigl(\sum\limits_{k=1}^{N}\Bigl|\sum_{l=1}^{N}
b_{k, l}u_l\Bigr|^2\Bigr)^{1/2}\Bigr)
\Bigl(\sum\limits_{k=1}^{N}\Bigl|\sum_{l=1}^{N}
b_{k, l}u_l\Bigr|^2\Bigr)^{-1/2}
\Bigl(\sum_{s=1}^{N}
b_{k_0, s}u_s\Bigr)u_{l_0}\, d\nu.
$$
Using the method of Lagrange multipliers,
the matrix $C:=
\bigl(\frac{\partial}{\partial b_{k_0l_0}}G(B)\bigr)_{k_0, l_0}\Bigl|_{B=A}$
 coincides, up to a constant, with the matrix
$\bigl(\frac{\partial}{\partial b_{k_0l_0}}\det(B)\Bigl|_{B=A}\bigr)_{k_0, l_0}$.
The latter matrix is equal to $(\det A)\cdot (A^T)^{-1}$.
Thus, $CA^T$ coincides, up to a constant factor, with the unit  matrix. Thus,  
\begin{eqnarray*}
&&\int_{{\bf x}}\varphi\Bigl(
\Bigl(\sum\limits_{k=1}^{N}|v_k(x)|^2\Bigr)^{1/2}\Bigr)
\Bigl(\sum\limits_{k=1}^{N}|v_k(x)|\Bigr)^{-1/2}
v_{r}(x)v_{r'}(x)\, \nu(dx)\\
&=&\int_{{\bf x}}\varphi\Bigl(
\Bigl(\sum\limits_{k=1}^{N}\Bigl|\sum_{l=1}^{N}
a_{k, l}u_l\Bigr|^2\Bigr)^{1/2}\Bigr)
\Bigl(\sum\limits_{k=1}^{N}\Bigl|\sum_{l=1}^{N}
a_{k, l}u_l\Bigr|^2\Bigr)^{-1/2}
\Bigl(\sum_{s=1}^{N}
a_{r, s}u_s\Bigr)\Bigl(\sum_{s=1}^{N}
a_{r', s}u_s\Bigr)\, d\nu = c\delta_{r, r'}
\end{eqnarray*}
with some constant $c>0$.
\end{proof}


\begin{theorem}\label{t5.1}
Let $p\in (1, \infty)$.
Let $\Phi$ be a ${\bf \Phi}$-function such that
$\Phi \in {\rm (Inc)}_p\cap{\rm (Dec)}$ and
$$
\sup\limits_{t>0}\bigr(\Phi(t)\Phi(t^{-1})\bigl)<\infty.
$$
Assume that $\varphi:=\Phi'$ is continuous on $(0, +\infty)$
and   there is  a ${\bf \Phi}$-function $\Psi\in {\rm (aInc)}_p\cap{\rm (Dec)}$
such that
\begin{equation}\label{cond}
\frac{\Phi(ts)}{\Phi(t)}\le K_{\Phi, \Psi}\Psi(s)\quad \forall t, s>0
\end{equation}
with $K_{\Phi, \Psi}\ge 1$.
Then there are positive constants $C:=C(\Phi, \Psi, p)$ and $c:=c(\Phi, \Psi, p)$,
depending only on $\Phi$, $\Psi$, and $p$,
such that
for every $N$-dimensional subspace $X_N\subset C(\Omega)$, $1\in X_N$,
there exist a set ${\bf x}:=\{x_1, \ldots, x_m\}\subset \Omega$
of cardinality
$$
m\le c\Psi\bigl(N^\frac{1}{\min\{p,2\}}\bigr)(\log 2N)^3
$$
and positive weights $\lambda=\{\lambda_1, \ldots, \lambda_m\}$,
$\lambda_1+\ldots+\lambda_m=1$, providing the following one-sided discretization inequality
\begin{equation}\label{result}
\|f\|_\Phi\le C\bigl(M_{\Phi, \Psi}(m)\bigr)^{1/p}
\|f\|_{\Psi, {\bf x}, \lambda} \quad \forall f\in X_N,
\end{equation}
where
\begin{eqnarray*}
M_{\Phi, \Psi}(m)&:=&
\max\Bigl\{1, \max\Bigl\{\frac{\Psi(t)}{\Phi(t)}\colon t\in[\Phi^{-1}(1), \Phi^{-1}(m)]\Bigr\}\Bigr\}
\\&=& \max\Bigl\{1,\max\Bigl\{\frac{\Psi\circ\Phi^{-1}(t)}{t}\colon t\in[1, m]\Bigr\}\Bigr\}
\quad \forall m\in(0, +\infty).
\end{eqnarray*}

\end{theorem}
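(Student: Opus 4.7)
The plan is to combine Lewis's change-of-density technique (Lemma \ref{lem5.1}) with the Marcinkiewicz-type discretization of Section \ref{sect4}. As a preliminary step, I pass from the continuous measure $\mu$ to a large but finite auxiliary measure $\nu_0=\sum_{j=1}^{M}\mu_j\delta_{y_j}$ that already provides a two-sided comparison $\|f\|_{L^\Phi(\mu)}\asymp \|f\|_{L^\Phi(\nu_0)}$ for all $f\in X_N$; this can be arranged with $M$ polynomial in $N$ by a random sampling argument. Lewis's lemma applied to $\Phi$ and $\nu_0$ on the subspace $X_N$ then produces a basis $\{v_k\}_{k=1}^{N}$ and a constant $c>0$ such that, writing $u(x):=(\sum_k |v_k(x)|^2)^{1/2}$ and $w(x):=\varphi(u(x))/u(x)$, one has the orthogonality $\int v_r v_{r'} w\, d\nu_0 = c\delta_{r r'}$ together with the normalization $\int \Phi(u)\, d\nu_0 = 1$. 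The assumption $1\in X_N$ rules out the degenerate case $u\equiv 0$ and ensures $f/u$ is well defined on the support of $\nu_0$.

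The pointwise inequality $|f(x)|^2\le u(x)^2\sum_k|a_k|^2$ for $f=\sum_k a_k v_k$, combined with the Lewis relation $\sum_k|a_k|^2 = c^{-1}\int|f|^2 w\, d\nu_0$, yields $|f(x)|/u(x)\le c^{-1/2}(\int|f|^2 w\, d\nu_0)^{1/2}$. Normalizing $w\, d\nu_0$ to a probability measure $\tilde{\nu}$, this amounts to a Nikolskii $NI_{2,\infty}(H)$ estimate for the subspace $\tilde X_N:=\{f/u : f\in X_N\}$ in $L^2(\tilde{\nu})$. Using the normalization $\int\Phi(u)\, d\nu_0=1$ together with $\Phi\in{\rm(Inc)}_p\cap{\rm(Dec)}$ and \eqref{cond}, one controls $H$ by $c_1\,\Psi(N^{1/\min\{p,2\}})^{1/2}$; the hypothesis $\sup_t\Phi(t)\Phi(t^{-1})<\infty$ is exactly what is needed to rule out singular contributions coming from the region where $u$ is very small.

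Theorem \ref{cor4.2} applied to $\tilde X_N$ with the measure $\tilde\nu$ and the norm $\|\cdot\|_{L^\Psi(\tilde\nu)}$ then yields a subset ${\bf x}\subset{\bf y}$ of cardinality $m\le c\,\Psi(N^{1/\min\{p,2\}})(\log 2N)^3$ and positive weights $\lambda_j$ (inherited from the Lewis density when passing back from the uniform discrete measure on ${\bf x}$ to a weighted measure on ${\bf y}$) with $\sum_j\lambda_j=1$, providing a two-sided discretization of $\|f/u\|_{L^\Psi(\tilde\nu)}$ by the discrete weighted $\Psi$-norm at ${\bf x}$. The final step is to translate this into the desired one-sided inequality for $f$ in the $\Phi$-norm with respect to $\mu$. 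Starting from \eqref{cond} in the form $\Phi(|f|)\le K_{\Phi,\Psi}\Phi(u)\Psi(|f|/u)$ and the normalization $\int\Phi(u)\, d\nu_0=1$, a Luxembourg-norm computation converts the discrete $\Psi$-norm of $f/u$ at ${\bf x}$ into a bound on $\|f\|_{L^\Phi(\nu_0)}$, and hence on $\|f\|_{L^\Phi(\mu)}$ via the preliminary comparison; the factor $M_{\Phi,\Psi}(m)^{1/p}$ arises because on the range $[\Phi^{-1}(1),\Phi^{-1}(m)]$ of modular values actually attained after discretization one must compare the two Orlicz gauges, with the $1/p$ exponent gained from $\Phi\in{\rm(Inc)}_p$.

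The main obstacle I expect is the last step: verifying that the transfer from the weighted $\Psi$-discretization of $f/u$ to the unweighted $\Phi$-discretization of $f$ produces exactly the factor $M_{\Phi,\Psi}(m)^{1/p}$ and no worse, while keeping the cardinality at $\Psi(N^{1/\min\{p,2\}})(\log 2N)^3$. This requires sharp tracking of how the weights $\lambda_j$ absorb the Lewis density $u$, and careful use of the structural hypotheses $\Phi\in{\rm(Inc)}_p\cap{\rm(Dec)}$, $\sup_t\Phi(t)\Phi(t^{-1})<\infty$, and \eqref{cond} at the precise places where they compensate the mismatch between $\Phi$ and $\Psi$ in the Luxembourg-norm inequality.
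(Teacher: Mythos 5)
Your blueprint correctly identifies the skeleton of the paper's argument: discretize $\mu$ to a finite $\nu_0$, apply Lewis's change-of-density (Lemma~\ref{lem5.1}) to get a basis $v_k$ and a nondegenerate density $F=(\sum_k|v_k|^2)^{1/2}$, show the normalized subspace $\tilde X_N=\{f/F\}$ satisfies a Nikolskii bound with respect to the tilted measure, then run the Marcinkiewicz discretization of Section~\ref{sect3}/\ref{sect4} on $\tilde X_N$ and transfer the resulting inequality back to $f$ in the $\Phi$-norm. However, there are several concrete errors and one real gap.

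First, the Nikolskii constant for $\tilde X_N$ is exactly $H=\sqrt{N}$ (since $\sum_r|\tilde v_r|^2\le N$ with $\tilde v_r:=\sqrt{N}\,v_r/F$ orthonormal in $L^2(\tilde\nu_0)$); your claim that $H$ is controlled by $\Psi(N^{1/\min\{p,2\}})^{1/2}$ conflates the Nikolskii constant with the cardinality of the discretization set, which only afterwards comes out as $\Psi(H^{2/\min\{p,2\}})(\log\cdots)^3=\Psi(N^{1/\min\{p,2\}})(\log\cdots)^3$. Second, the hypothesis $\sup_t\Phi(t)\Phi(t^{-1})<\infty$ is not used to establish this Nikolskii bound; it enters later, to guarantee that $M_0:=q\max\{1,\sup_j\Phi(F(y_j))\Phi(1/F(y_j))\}$ is finite, which in turn makes the specific weight construction \eqref{eq5.7} possible. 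Third, you propose to discretize only the $\Psi$-norm of $f/u$, but the paper must discretize \emph{both} the $L^\Phi(\tilde\nu_0)$ and $L^\Psi(\tilde\nu_0)$ norms on $\tilde X_N$ \emph{simultaneously} (this is why it invokes the probabilistic formulation Remark~\ref{rem2.1} with $\delta=1/2$ rather than a one-shot application of Theorem~\ref{cor4.1} or \ref{cor4.2}): the $\Phi$-discretization is needed to derive the key estimate $\frac{1}{m}\sum_j\Phi\bigl(\frac{1}{2M_0F(x_j)}\bigr)\le1$, which feeds into the weight definition; the $\Psi$-discretization feeds into the final transfer. Fourth, the weights are not ``inherited from the Lewis density'' in any naive sense — they are built explicitly from $\Phi$, $F$ and $M_0$ as $\lambda_j\propto\Phi(\max\{\frac{1}{2M_0F(x_j)},1\})$, and the role of the assumption $1\in X_N$ is precisely that $\tilde 1:=1/F\in\tilde X_N$, so that the discretized $\Phi$-norm of $\tilde 1$ can be bounded.

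Finally, the step you flag as the ``main obstacle'' — converting the discrete $\Psi$-estimate on $\tilde f=f/F$ into the integral $\Phi$-estimate on $f$ with exactly the factor $M_{\Phi,\Psi}(m)^{1/p}$ — is in fact the technical core and not a routine ``Luxembourg-norm computation.'' It requires: (i) bounding $\max_j|\tilde f(x_j)|$ using the discrete $\Phi$-modular at level $m$; (ii) splitting the $\Psi$-modular over $\{|\tilde f|\le M_1\Phi^{-1}(1)\}$ and its complement, where the quantity $M_{\Phi,\Psi}(m)$ enters by comparing $\Psi$ to $\Phi$ on the attained range $[\Phi^{-1}(1),\Phi^{-1}(m)]$ and the $1/p$-exponent is gained from \eqref{eq-est-p}; and then (iii) unwinding through $\tilde\nu_0\to\nu_0\to\mu$, using $\varphi(F)F\asymp\Phi(F)$ from $\Phi\in{\rm(Inc)}_p\cap{\rm(Dec)}_q$ and condition \eqref{cond} once more. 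Since you have not addressed this chain, the proposal contains a genuine gap; the strategy is right but the proof is not complete.
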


\begin{proof}
Since $X_N$ is a finite-dimensional space of continuous functions,
there is a constant $H>0$ such that
$X_N\in NI_{2,\infty}(H)$.
In light of Theorem \ref{cor4.1}, one has  a number $m_0$,
a  set of points ${\bf y}=\{y_1, \ldots, y_{m_0}\}\subset\Omega$, and a uniform probability
measure $\nu_0:=\frac{1}{m_0}\sum\limits_{j=1}^{m_0}\delta_{y_j}$ on this set such that
\begin{equation}\label{init-discr}
\frac{1}{2}\|f\|_{L^\Phi(\mu)}\le \|f\|_{L^\Phi(\nu_0)}\le \frac{3}{2}\|f\|_{L^\Phi(\mu)}
\quad \forall f\in X_N.
\end{equation}
On the other hand, by Lemma \ref{lem5.1}, there is a basis $v_1, \ldots, v_N$ in $X_N$
 such that
$$
\int_{{\bf y}'} \varphi\bigl(F(y)\bigr)
F(y)^{-1}
v_{r}(y)v_{r'}(y)\, \nu_0(dy) = c\delta_{r, r'},\quad c>0,
$$
where $F(y):=\bigl(\sum\limits_{k=1}^{N}|v_k(y)|^2\bigr)^{1/2}$ and
${\bf y}'=\{y\in {\bf y}\colon F(y)>0\}$.
In particular,
$$
\int_{{\bf y}'} \varphi\bigl(F(y)\bigr)F(y)\, \nu_0(dy)
=\sum_{r=1}^{N}\int_{{\bf y}'} \varphi\bigl(F(y)\bigr) F(y)^{-1} |v_{r}(y)|^2\, \nu_0(dy)
=cN.
$$
Further, the condition $\Phi\in {\rm (Inc)}_p\cap{\rm (Dec)}$  (cf. Remark \ref{rem-diff})
implies
\begin{equation}\label{eq-der-b}
p\Phi(F(y))\le \varphi\bigl(F(y)\bigr)F(y) \le q\Phi(F(y))
\end{equation}
with some $q:=q(\Phi)\ge 1$ such that $\Phi\in {\rm (Dec)}_q$.
Moreover, Lemma \ref{lem5.1} yields
$$
\int_{{\bf y}} \Phi\bigl(F(y)\bigr)\, \nu_0(dy)=1.
$$ 
Thus, we arrive at the condition
\begin{equation}\label{eq5.2}
1\le p\le cN\le q.
\end{equation}
We set $\displaystyle\tilde{\nu}_0 := \frac{\varphi(F)F}{cN}\nu_0 =
\frac{1}{m_0}\sum\limits_{j=1}^{m_0}\frac{\varphi(F(y_j))F(y_j)}{cN}\delta_{y_j}$,
$\displaystyle \tilde{v}_r:=\frac{\sqrt{N}}{F}v_r$, and
$$\tilde{X}_N:=\{\tilde{f}=F^{-1}f\colon f\in X_N\}.$$
Then we note that
$$
\int_{{\bf y}'}\tilde{v}_r\tilde{v}_{r'}\, d\tilde{\nu}_0
= c^{-1}\int_{{\bf y}'}\varphi(F)F^{-1}v_rv_{r'}\, d\nu_0=\delta_{r, r'}
$$
and
$$
\sum_{r=1}^{N}|\tilde{v}_r|^2\le N,
$$
i.e., $\tilde{X}_N\in NI_{2, \infty}(\sqrt{N})$, where the $L^2$ norm is taken with respect to the measure  $\tilde{\nu}_0$.

By \eqref{cond} with $t=1$, $\Phi(s)\le K_{\Phi, \Psi}\Psi(s)$ and therefore,
$$
\Phi\bigl(N^{\frac{1}{\min\{p, 2\}}}\bigr)
\le K_{\Phi, \Psi}\Psi\bigl(N^{\frac{1}{\min\{p,2\}}}\bigr).
$$
In light of Remark \ref{rem2.1} (with $\delta=1/2$), we  simultaneously discretize
$L^\Phi(\tilde{\nu}_0)$ and $L^\Psi(\tilde{\nu}_0)$ norms on the subspace $\tilde{X}_N$ as follows:
there is a set ${\bf x}:=\{x_1, \ldots, x_m\}\subset {\bf y}'\subset \Omega$ of cardinality
$$
m\le c(\Phi, \Psi, p)
\Psi\bigl(N^{\frac{1}{\min\{p, 2\}}}\bigr)
(\log 2N)^3
$$
such that for a uniform measure $\nu'=\frac{1}{m}\sum\limits_{j=1}^{m}\delta_{x_j}$ on ${\bf x}$ one has
\begin{equation}\label{eq5.8}
\frac{1}{2}\|\tilde{f}\|_{L^\Phi(\tilde{\nu}_0)}\le \|\tilde{f}\|_{L^\Phi(\nu')}\le
\frac{3}{2}\|\tilde{f}\|_{L^\Phi(\tilde{\nu}_0)}
\quad \forall \tilde{f}\in \tilde{X}_N
\end{equation}
and
\begin{equation}\label{eq5.9}
a_{\Psi}(1)^{-1}\frac{1}{2}\|\tilde{f}\|_{L^\Psi(\tilde{\nu}_0)}\le \|\tilde{f}\|_{L^\Psi(\nu')}\le
a_{\Psi}(1)\frac{3}{2}\|\tilde{f}\|_{L^\Psi(\tilde{\nu}_0)}
\quad \forall \tilde{f}\in \tilde{X}_N.
\end{equation}
Setting
\begin{equation}\label{eq5.5}
M_0:=q\max\bigl\{1, \max\bigl\{\Phi(F(y_j))\Phi(\tfrac{1}{F(y_j)})\colon y_j\in{\bf y}'\bigr\}\bigr\},
\end{equation}
we note that $1\le M_0<\infty$ due to the assumptions on the function $\Phi$.
Then, by \eqref{eq-est}, \eqref{eq-der-b}, \eqref{eq5.2},
and the definition of $\tilde{\nu}_0$, one has
$$
\int_{{\bf y}'}\Phi\bigl(M_0^{-1} \tilde{1}\bigr)\, d\tilde{\nu}_0
= (cN)^{-1}\int_{{\bf y}'}\Phi\bigl(M_0^{-1}\tilde{1}\bigr) \varphi(F)F\, d \nu_0
\le \\
q M_0^{-1}
\int_{{\bf y}'}\Phi\bigl(\tfrac{1}{F}\bigr) \Phi(F)\, d \nu_0
\le1,
$$
where $\tilde{1}:=1/F$.
This and
$\|\tilde{1}\|_{L^\Phi(\nu')}\le \frac{3}{2}\|\tilde{1}\|_{L^\Phi(\tilde{\nu}_0)}<2M_0$, cf. \eqref{eq5.8},
imply that
\begin{equation}\label{eq5.6}
\frac{1}{m}\sum\limits_{j=1}^{m}\Phi(\tfrac{1}{2M_0 F(x_j)})\le 1.
\end{equation}
Now we set
\begin{equation}\label{eq5.7}
\lambda_j:=\frac{\Phi(\max\{\frac{1}{2M_0 F(x_j)}, 1\})}{\sum_{k=1}^{m}\Phi(\max\{\frac{1}{2M_0F(x_k)}, 1\})}
\end{equation}
and
$\nu = \sum\limits_{j=1}^{m}\lambda_j\delta_{x_j}$.

Fix $f\in X_N$ with $\|f\|_{L^\Psi(\nu)}< 1$.
Let us prove that
\begin{equation}\label{eq-new-1}
\|\tilde{f}\|_{L^\Psi(\nu')}\le C_1(\Phi, \Psi, p)\bigl(1+\Psi\circ\Phi^{-1}(1) + M_{\Phi, \Psi}(m)\bigr)^{1/p}.
\end{equation}
First,
by  \eqref{eq-est}, \eqref{cond},
and  \eqref{eq5.6}, we obtain
\begin{eqnarray*}
\int_{{\bf x}}\Phi\bigl(\tfrac{1}{K_{\Phi, \Psi} (1+\Phi(1))}(2M_0)^{-1}|\tilde{f}|\bigr)\, d\nu'
&\le& \frac{1}{K_{\Phi, \Psi} (1+\Phi(1))} \int_{{\bf x}}\Phi\bigl((2M_0)^{-1}|\tilde{f}|\bigr)\, d\nu'
\\
&=&
\frac{1}{K_{\Phi, \Psi} (1+\Phi(1))}\cdot\frac{1}{m}\sum_{j=1}^{m}\Phi\bigl((2M_0)^{-1}|\tilde{f}(x_j)|\bigr)
\\
&\le&
\frac{1}{1+\Phi(1)}\cdot
\frac{1}{m}\sum_{j=1}^{m}\Psi(|f(x_j)|)\Phi\bigl(\max\{\tfrac{1}{2M_0 F(x_j)}, 1\}\bigr)
\\
&\le&
\frac{1}{1+\Phi(1)}\cdot
\frac{1}{m}\Bigl(\sum_{k=1}^{m}\Phi\bigl(\max\{\tfrac{1}{2M_0F(x_k)}, 1\}\bigr)\Bigr)
\int_{{\bf x}}\Psi(|f|)\, d\nu
\\
&\le&
\frac{1}{1+\Phi(1)}\cdot
\frac{1}{m}\sum_{k=1}^{m}\Phi\bigl(\max\{\tfrac{1}{2M_0F(x_k)}, 1\}\bigr)\le 1,
\end{eqnarray*}
where $M_0$ is defined by \eqref{eq5.5}.
Therefore,
$\|\tilde{f}\|_{L^\Phi(\nu')}\le 2M_0K_{\Phi, \Psi} (1+\Phi(1))<3M_0K_{\Phi, \Psi} (1+\Phi(1))$, which implies the estimate
$$
\Phi\Bigl(\frac{\max_{1\le j\le n}|\tilde{f}(x_j)|}{3M_0K_{\Phi, \Psi} (1+\Phi(1))}\Bigr)
=\max_{1\le j\le n}\Phi\Bigl(\frac{|\tilde{f}(x_j)|}{3M_0K_{\Phi, \Psi} (1+\Phi(1))}\Bigr)
\le m.
$$
Second,
letting  $M_1=3M_0K_{\Phi, \Psi} (1+\Phi(1))$ and $M_2:= 1+\Psi\circ\Phi^{-1}(1) + M_{\Phi, \Psi}(m)$, from \eqref{eq-est-p}
we derive that
\begin{eqnarray*}
&&\int_{{\bf x}}\Psi\bigl(a_\Psi(p)^{-1/p}M_2^{-1/p}M_1^{-1}|\tilde{f}|\bigr)\, d\nu'
\le
M_2^{-1} \int_{\{M_1^{-1}|\tilde{f}|\le \Phi^{-1}(1)\}}\Psi\bigl(M_1^{-1}|\tilde{f}|\bigr)\, d\nu'
\\&&\qquad\qquad\qquad\qquad\qquad\qquad\qquad\quad+
M_2^{-1}
\int_{\{M_1^{-1}|\tilde{f}|\ge \Phi^{-1}(1)\}}\Psi\bigl(M_1^{-1}|\tilde{f}|\bigr)\, d\nu'
 \\
&\le& M_2^{-1}\Bigl(\Psi\circ\Phi^{-1}(1) + M_{\Phi, \Psi}(m)
\int_{\{M_1^{-1}|\tilde{f}|\ge \Phi^{-1}(1)\}}\Phi\bigl(M_1^{-1}|\tilde{f}|\bigr)\, d\nu'
\Bigr)\le1,
\end{eqnarray*}
which yields  $\|\tilde{f}\|_{L^\Psi(\nu')}\le a_\Psi(p)^{1/p}M_1M_2^{1/p}$, i.e., inequality \eqref{eq-new-1} holds.

Then, using  the left-hand side bound in \eqref{eq5.9}, we get
\begin{equation}\label{eq-vsp2}
\|\tilde{f}\|_{L^\Psi(\tilde{\nu}_0)}\le 2a_\Psi(1)\|\tilde{f}\|_{L^\Psi(\nu')}\le
2a_\Psi(1) a_\Psi(p)^{1/p}M_1M_2^{1/p}.
\end{equation}
By \eqref{eq-est}, \eqref{eq-der-b},
the definition of $\tilde{\nu}_0$,
relation \eqref{eq5.2} and property \eqref{cond}, one has
\begin{eqnarray*}
&&\int_{{\bf y}}\Phi\bigl((qK_{\Phi, \Psi})^{-1}(3a_\Psi(1)a_\Psi(p)^{1/p}M_1M_2^{1/p})^{-1}|f|\bigr)\, d\nu_0
\\
&\le&
(qK_{\Phi, \Psi})^{-1}
\int_{{\bf y}}\Phi\bigl((3a_\Psi(1)a_\Psi(p)^{1/p}M_1M_2^{1/p})^{-1}|f|\bigr)\, d\nu_0
\\
&\le&
q^{-1}
\int_{{\bf y}'}\Psi\bigl((3a_\Psi(1)a_\Psi(p)^{1/p}M_1M_2^{1/p})^{-1}|\tilde{f}|\bigr)\Phi(F)\, d\nu_0
\\
&\le&
q^{-1}
\int_{{\bf y}'}\Psi\bigl((3a_\Psi(1)a_\Psi(p)^{1/p}M_1M_2^{1/p})^{-1}|\tilde{f}|\bigr)\varphi(F)F\, d\nu_0
\\
&=& q^{-1}cN
\int_{{\bf y}'}\Psi\bigl((3a_\Psi(1)a_\Psi(p)^{1/p}M_1M_2^{1/p})^{-1}|\tilde{f}|\bigr)\, d\tilde{\nu}_0
\\
&\le&
\int_{{\bf y}'}\Psi\bigl((3a_\Psi(1)a_\Psi(p)^{1/p}M_1M_2^{1/p})^{-1}|\tilde{f}|\bigr)\, d\tilde{\nu}_0
\le 1,
\end{eqnarray*}
where the last inequality follows from \eqref{eq-vsp2}.
This implies
\begin{eqnarray*}
\|f\|_{L^\Phi(\nu_0)}&\le& C_2(\Phi, \Psi, p)(1+\Psi\circ\Phi^{-1}(1) + M_{\Phi, \Psi}(m))^{1/p}
\\&\le& 3^{1/p} C_2(\Phi, \Psi, p) \bigl(M_{\Phi, \Psi}(m)\bigr)^{1/p},
\end{eqnarray*}
where
$$
C_2(\Phi, \Psi, p):=9qK_{\Phi, \Psi}^2a_\Psi(1)a_\Psi(p)^{1/p}(1+\Phi(1))M_0.
$$
Finally,  \eqref{init-discr} yields
$$
\|f\|_{L^\Phi(\mu)}\le 2\|f\|_{L^\Phi(\nu_0)}\le
C_3(\Phi, \Psi, p) \bigl(M_{\Phi, \Psi}(m)\bigr)^{1/p}
$$
with $C_3(\Phi, \Psi, p):=2\cdot 3^{1/p}C_2(\Phi, \Psi, p)$.
The proof of the theorem is now complete.
\end{proof}

We note that in Theorem \ref{t5.1} one can actually choose  weights $\lambda_j$ to be equal.

\begin{remark}\label{rem5.1}
{\rm
Under the same assumptions on the functions $\Phi$ and $\Psi$ as in Theorem~\ref{t5.1},
we obtain the following statement:
There are positive constants $C:=C(\Phi, \Psi, p)$ and $c:=c(\Phi, \Psi, p)$
such that
for every $N$-dimensional subspace $X_N\subset C(\Omega)$
there is a set ${\bf x}:=\{x_1, \ldots, x_m\}\subset \Omega$
of cardinality
$$
m\le c\Psi\bigl(N^\frac{1}{\min\{p,2\}}\bigr)(\log 2N)^3
$$
for which
$$
\|f\|_\Phi\le C\bigl(M_{\Phi, \Psi}(m)\bigr)^{1/p}
\|f\|_{\Psi, {\bf x}} \quad \forall f\in X_N.
$$

Indeed, if $1\not\in X_N$, we consider the space $X_N':={\rm span}\{X_N, 1\}$ of dimension $N+1\le 2N$.
We now argue as in the proof of Proposition 3.1 from \cite{LMT24}.
Let $\{y_1, \ldots, y_k\}$ and $\{\lambda_1, \ldots, \lambda_k\}$, $\sum_{j=1}^{k}\lambda_j=1$, be the points
and weights from Theorem \ref{t5.1} with
$$
k\le c\Psi\bigl((2N)^\frac{1}{\min\{p,2\}}\bigr)(\log 4N)^3
$$
and
$$
\|f\|_\Phi\le C\bigl(M_{\Phi, \Psi}(k)\bigr)^{1/p}
\|f\|_{\Psi, {\bf y}, \lambda} \quad \forall f\in X_N'.
$$
There holds
$$
m:=\sum_{j=1}^k([\lambda_jk]+1)
\le
2k
\le 16c 2^\frac{q}{\min\{p,2\}}\Psi\bigl(N^\frac{1}{\min\{p,2\}}\bigr)(\log 2N)^3,
$$
where $q:=q(\Psi)$ is a number such that $\Psi\in {\rm (Dec)}_q$.
Take points $\{x_1, \ldots, x_m\}$ such that
$([\lambda_jk]+1)$ of them coincide with $y_j$ for each $j\in\{1, \ldots, k\}$.
Let now $f\in X_N'$ be such that $\|f\|_{\Psi, {\bf x}}<1$. Then, by \eqref{eq-est},
\begin{eqnarray*}
\sum_{j=1}^{k}\lambda_j\Psi\bigl(a_\Psi(1)^{-1}2^{-1}|f(y_j)|\bigr)
&\le& 2^{-1}\sum_{j=1}^{k}\lambda_j\Psi\bigl(|f(y_j)|\bigr)
\le \frac{1}{2k}\sum_{j=1}^{k}(\lambda_jk)\Psi\bigl(|f(y_j)|\bigr)
\\
&\le& \frac{1}{m}\sum_{j=1}^{k}([\lambda_jk]+1)\Psi\bigl(|f(y_j)|\bigr)
\le \frac{1}{m}\sum_{i=1}^{m}\Psi\bigl(|f(x_i)|\bigr)\le 1.
\end{eqnarray*}
Since $k\le m$, we have
$$
\|f\|_\Phi\le C \bigl(M_{\Phi, \Psi}(k)\bigr)^{1/p}
\|f\|_{\Psi, {\bf y}, \lambda}
\le 2a_\Psi(1) C\bigl(M_{\Phi, \Psi}(m)\bigr)^{1/p}
$$
completing the proof.
}
\end{remark}

\begin{remark}\label{rem5.2}
{\rm
We point out that one can always take
$$
\Psi_\Phi(s):= \sup_{t>0}\frac{\Phi(ts)}{\Phi(t)}
$$
so that condition \eqref{cond} holds with $K_{\Phi, \Psi_\Phi}=1$.
Indeed, for a function $\Phi\in {\rm (aInc)}_p$ (respectively, $\Phi\in{\rm (aDec)}_q$)
one always has $\Psi_\Phi\in {\rm (aInc)}_p$ ($\Psi_\Phi\in{\rm (aDec)}_q$)
with $a_{\Psi_\Phi}(p) = a_{\Phi}(p)$ ($b_{\Psi_\Phi}(q) = b_{\Phi}(q)$).
}
\end{remark}

We now apply Theorem \ref{t5.1} and Remark \ref{rem5.1} to the function
$\Phi_{p, \alpha, \beta} := t^p\frac{(\ln(e+t))^\alpha}{(\ln(e+t^{-1}))^\beta}$, cf.
\eqref{eq-Phi}.

\begin{example}\label{ex5.1}
{\rm
Let $p\ge2$ and $\alpha\ge0$.
There are positive constants $c:=c(p, \alpha)$ and
$C:=C(p, \alpha)$
such that, for every $N$-dimensional space $X_N\subset C(\Omega)$,
there exists a set ${\bf x}:=\{x_1, \ldots, x_m\}\subset \Omega$
of cardinality
$$
m\le CN^{\frac{p}{\min\{p, 2\}}}(\log 2N)^{3+2\alpha}
$$
for which
$$
\|f\|_{\Phi_{p, \alpha, \alpha}}\le c(\log 2N)^{\alpha/p}
\|f\|_{\Phi_{p, 2\alpha, 0}, {\bf x}} \quad \forall f\in X_N.
$$
}
\end{example}
Indeed, we have
$$\Phi_{p, \alpha, \alpha}(ts)\le  4^\alpha\Phi_{p, \alpha, \alpha}(t)\Phi_{p, 2\alpha, 0}(s), \qquad t,s>0,$$
i.e., condition \eqref{cond} holds with $\Psi=\Phi_{p, 2\alpha, 0}$.
Moreover,
$$
\sup\limits_{t>0}\bigr(\Phi_{p, \alpha, \alpha}(t)\Phi_{p, \alpha, \alpha}(t^{-1})\bigl)
= 1
$$
and, for every $m\ge 1$,
\begin{equation*}\label{eq-vsp4}
M_{\Phi_{p, \alpha, \alpha}, \Phi_{p, 2\alpha, 0}}(m)
:=
\max\Bigl\{1, \max\Bigl\{\frac{\Phi_{p, 2\alpha, 0}(t)}{\Phi_{p, \alpha, \alpha}(t)}\colon
t\in[\Phi_{p, \alpha, \alpha}^{-1}(1), \Phi_{p, \alpha, \alpha}^{-1}(m)]\Bigr\}\Bigr\}\le 2^\alpha (\ln(e+m))^\alpha.
\end{equation*}
To see the last estimate, we note that
$\displaystyle
\frac{\Phi_{p, 2\alpha, 0}(t)}{\Phi_{p, \alpha, \alpha}(t)}
= (\ln(e+t)\ln(e+t^{-1}))^\alpha\le 2^\alpha (\ln(e+t))^\alpha
$ and
$\Phi_{p, \alpha, \alpha}\in {\rm (Inc)}_1$.
Finally, we apply  \eqref{result} and Remark \ref{rem5.1} to complete the proof.

\subsection{Bounds of integral Orlicz norms by discrete $L^2$-norms}
\label{ssect6-2}

In this subsection we are going to prove the following general one-sided discretization
result.

\begin{theorem}\label{T-1sided}
Let $\Phi\in {\bf \Phi}_w$.
Assume that, for every $N\ge 1$, there is a number $K:=K(N, \Phi)>0$ such that
$$
\sup\limits_{0<t\le \sqrt{N}}\frac{\Phi\bigl(K^{-1}t\bigr)}{t^2} <1.
$$
Then there are positive numbers $c_1$ and $c_2$ such that,
for every $N$-dimensional subspace $X_N\subset C(\Omega)$,
there is a set ${\bf x}:=\{x_1, \ldots, x_m\}\subset \Omega$
of cardinality $m\le c_1N$ for which
$$
\|f\|_\Phi\le c_2K \|f\|_{2, {\bf x}} \quad \forall f\in X_N.
$$
\end{theorem}

The proof of this theorem  follows the ideas from \cite{KPUU24}, cf.  Theorem 13 there.
In particular, we will use the following two lemmas.

\begin{lemma}[see Lemma 11 in \cite{KPUU24} or Proposition 3.2 in
 \cite{LMT24}]\label{lem-discr-1}
There exist two numerical universal constants $c_1, c_2\ge 1$ such that for any $N\ge 1$ and
for any $N$-dimensional subspace $X_N\subset C(\Omega)$, there is a number $m:=m(N)\in[N, c_1N]$ and
 points $y_1, \ldots, y_{m}\in \Omega$ such that
$$
\|f\|_2\le c_2 \Bigl(\frac{1}{m}\sum_{j=1}^{m}|f(y_j)|^2\Bigr)^{1/2}\quad \forall f\in X_N.
$$
\end{lemma}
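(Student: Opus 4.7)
I follow the approach of \cite{KPUU24}, combining a Lewis-type change of density with the discrete $L^2$-Marcinkiewicz inequality of Lemma~\ref{lem-discr-1}. The argument has three parts: (i) an $L^p$-Lewis construction produces an auxiliary probability measure $\tilde\mu$ on $\Omega$ and an isomorphic copy $\tilde X_N$ of $X_N$ satisfying the Nikolskii inequality $NI_{2,\infty}(\sqrt N)$ in $L^2(\tilde\mu)$; (ii) a direct pointwise-to-integral Orlicz estimate bounds $\|\cdot\|_{L^\Phi(\sigma)}$ by $\bigl(\Phi(\sqrt N)/N\bigr)^{1/p}\|\cdot\|_{L^2(\sigma)}$ on any subspace satisfying $NI_{2,\infty}(\sqrt N)$; (iii) Lemma~\ref{lem-discr-1} discretizes the $L^2(\tilde\mu)$-norm, and a point-replication step converts the resulting weighted discrete sum to the uniform form $\|f\|_{2,\mathbf x}$.

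\textbf{Steps (i) and (ii).} The $L^p$-Lewis lemma applied to $X_N\subset L^p(\mu)$ (the continuous analogue of Lemma~\ref{lem5.1} for $\Psi(t)=t^p$) yields a basis $v_1,\dots,v_N$ with $F:=\bigl(\sum_k|v_k|^2\bigr)^{1/2}$ satisfying $\int F^p\,d\mu=N$ and $\int F^{p-2}v_rv_s\,d\mu=\delta_{rs}$. Setting $d\tilde\mu:=F^p/N\,d\mu$ and $\tilde X_N:=\{\sqrt N\,f/F:f\in X_N\}$, the functions $\sqrt N\,v_k/F$ form an orthonormal basis of $\tilde X_N$ in $L^2(\tilde\mu)$ with $\sum_k|\sqrt N\,v_k/F|^2=N$ pointwise, whence $\tilde X_N\in NI_{2,\infty}(\sqrt N)$ in $L^2(\tilde\mu)$. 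For (ii), given $g$ in a subspace satisfying $NI_{2,\infty}(\sqrt N)$ in a probability $L^2(\sigma)$, I combine $\|g\|_\infty\le\sqrt N\|g\|_{L^2(\sigma)}$ with $\mathrm{(aInc)}_p$ (giving $\Phi(t)\le a_\Phi(p)(t/\sqrt N)^p\Phi(\sqrt N)$ for $t\le\sqrt N$) and the interpolation $\|g\|_{L^p(\sigma)}^p\le\|g\|_\infty^{p-2}\|g\|_{L^2(\sigma)}^2\le N^{(p-2)/2}\|g\|_{L^2(\sigma)}^p$ to conclude $\int\Phi(|g|/\lambda)\,d\sigma\le a_\Phi(p)\Phi(\sqrt N)\|g\|_{L^2(\sigma)}^p/(N\lambda^p)\le 1$ whenever $\lambda\ge a_\Phi(p)^{1/p}\bigl(\Phi(\sqrt N)/N\bigr)^{1/p}\|g\|_{L^2(\sigma)}$. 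Handling the regime $\Phi(\sqrt N)/N<1/a_\Phi(p)$ via the bound $\Phi(\sqrt N)/N\ge\Phi(1)/a_\Phi(p)$ (a consequence of $\mathrm{(aInc)}_p$ with $p\ge 2$) produces the stated $(1+(\Phi(1))^{-1})^{1/p}$ factor.

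\textbf{Step (iii): discretization.} Apply Lemma~\ref{lem-discr-1} to $\tilde X_N\subset C(\Omega)$ equipped with probability measure $\tilde\mu$: there exist $m_0\le c_1 N$ points $y_1,\dots,y_{m_0}\in\Omega$ with $\|\tilde f\|_{L^2(\tilde\mu)}\le c_2\|\tilde f\|_{2,\mathbf y}$ for all $\tilde f\in\tilde X_N$. Composing with step~(ii) and expressing $\tilde f=\sqrt N\,f/F$ yields
\[
\|f\|_{L^\Phi(\mu)}\le c_2(\Phi,p)\bigl(\Phi(\sqrt N)/N\bigr)^{1/p}\Bigl(\frac{N}{m_0}\sum_{j=1}^{m_0}\frac{|f(y_j)|^2}{F(y_j)^2}\Bigr)^{1/2},
\]
where the transfer $\|f\|_{L^\Phi(\mu)}\le C\|\tilde f\|_{L^\Phi(\tilde\mu)}$ comes from the $L^p$-identity $\|f\|_p=N^{1/p-1/2}\|\tilde f\|_{L^p(\tilde\mu)}$ (so $\|f\|_p\le\|\tilde f\|_{L^p(\tilde\mu)}$ for $p\ge 2$), adapted to $\Phi$ via Lemma~\ref{lem-equiv} and direct integration.

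\textbf{Main obstacle.} The final conversion of this weighted sum into the uniform $\|f\|_{2,\mathbf x}$ with $m\le c_1 N$ is the technical crux. Following Remark~\ref{rem5.1}, I replicate each $y_j$ with multiplicity $\lceil KN/F(y_j)^2\rceil$ so the sum becomes uniform while the total cardinality stays $\sim K+K\sum_j 1/F(y_j)^2$. The sum $\sum_j 1/F(y_j)^2$ is controlled by augmenting (if needed) $X_N$ by the constant function and applying Lemma~\ref{lem-discr-1} to the $(N{+}1)$-dimensional subspace $\mathrm{span}(\tilde X_N,\sqrt N/F)$, together with H\"older's inequality $\|\sqrt N/F\|_{L^2(\tilde\mu)}^2=\int F^{p-2}\,d\mu\le N^{(p-2)/p}$. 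Carefully bookkeeping the constants along this chain, as in \cite{KPUU24}, produces exactly the stated $c_2(\Phi,p)=Ca_\Phi(p)^{2/p}(1+(\Phi(1))^{-1})^{1/p}$.
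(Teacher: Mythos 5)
Your proposal does not prove the statement it was supposed to prove. The statement is Lemma \ref{lem-discr-1} itself: a purely $L^2$, one-sided discretization inequality $\|f\|_2\le c_2\bigl(\tfrac1m\sum_{j=1}^m|f(y_j)|^2\bigr)^{1/2}$ with $m\in[N,c_1N]$ points and \emph{numerical} constants, valid for an arbitrary $N$-dimensional subspace of $C(\Omega)$. What you have written is instead a sketch of the proof of Theorem \ref{T-1sided-1} (the Orlicz-versus-discrete-$L^2$ bound, whose constant $C a_\Phi(p)^{2/p}(1+(\Phi(1))^{-1})^{1/p}$ you quote at the end), and in Step (iii) you explicitly \emph{apply} Lemma \ref{lem-discr-1} to the auxiliary subspace $\tilde X_N$. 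As a proof of the lemma this is circular: the one ingredient you take as a black box is exactly the assertion to be established, and removing it leaves nothing in Steps (i)--(ii) (Lewis change of density, Nikolskii via Kiefer--Wolfowitz, pointwise Orlicz estimates) that produces any discrete point set at all.

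Beyond the circularity, none of the techniques you invoke can supply the actual content of the lemma. The paper does not prove it either; it is imported verbatim from \cite{KPUU24} (Lemma 11) and \cite{LMT24} (Proposition 3.2), and the known proofs combine a Kiefer--Wolfowitz-type change of density (Theorem after Lemma \ref{lem-discr-2}, \cite{KW}) with nontrivial subsampling theorems for finite frames of Batson--Spielman--Srivastava / Kadison--Singer (Marcus--Spielman--Srivastava) type, which is what makes a \emph{linear} number of points $m\le c_1N$ with universal constants possible. The chaining and entropy machinery used elsewhere in this paper only yields point counts with extra logarithmic factors and requires a Nikolskii assumption, and your replication/bookkeeping step in the ``main obstacle'' paragraph controls cardinality only up to quantities like $\sum_j F(y_j)^{-2}$, which is not bounded by $O(N)$ without again invoking a result of the same depth. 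So the proposal has a genuine gap: it addresses a different theorem, presupposes the lemma it should prove, and offers no route to the $O(N)$-point one-sided $L^2$ discretization other than citing it.
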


\begin{lemma}[see Theorem 12 in \cite{KPUU24}]\label{lem-discr-2}
There are two numerical universal constants $c_1, c_2\ge 1$ such that for any $N\ge 1$ and
for any $N$-dimensional subspace $X_N\subset C(\Omega)$, there is a number $m:=m(N)\in[N, c_1N]$ and
points $z_1, \ldots, z_{m}\in \Omega$ such that
$$
\|f\|_\infty \le c_2 \sqrt{N} \Bigl(\frac{1}{m}\sum_{j=1}^{m}|f(z_j)|^2\Bigr)^{1/2}\quad \forall f\in X_N.
$$
\end{lemma}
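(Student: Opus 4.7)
The plan is to combine the two discretization lemmas \ref{lem-discr-1} and \ref{lem-discr-2} into a single point set $\mathbf{x}$ that simultaneously controls the $L^2$-norm and the sup-norm by the discrete $L^2$-functional $\|\cdot\|_{2,\mathbf{x}}$, and then convert these two estimates into the desired Orlicz modular bound by invoking the almost-monotonicity of $\Phi(t)t^{-p}$ twice, the second time anchored at the point $\sqrt{N}$ so that the factor $\Phi(\sqrt{N})/N$ from the statement appears naturally.

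First, I would invoke Lemma \ref{lem-discr-1} to obtain points $\mathbf{y}=\{y_1,\dots,y_{m_1}\}$ with $m_1\le c_1 N$ and $\|f\|_2\le c\|f\|_{2,\mathbf{y}}$ for all $f\in X_N$, and Lemma \ref{lem-discr-2} to get points $\mathbf{z}=\{z_1,\dots,z_{m_2}\}$ with $m_2\le c_1 N$ and $\|f\|_\infty\le c\sqrt{N}\|f\|_{2,\mathbf{z}}$. Taking $\mathbf{x}:=\mathbf{y}\cup\mathbf{z}$ as a multiset yields a point set of cardinality $m\le 2c_1 N$, and the trivial comparisons $\|f\|_{2,\mathbf{y}}^2\le (m/m_1)\|f\|_{2,\mathbf{x}}^2$ and $\|f\|_{2,\mathbf{z}}^2\le (m/m_2)\|f\|_{2,\mathbf{x}}^2$ furnish, with a new numerical constant $A\ge 1$, the pair of inequalities
\begin{equation*}
\|f\|_2\le A\|f\|_{2,\mathbf{x}},\qquad \|f\|_\infty\le A\sqrt{N}\|f\|_{2,\mathbf{x}}\qquad \forall f\in X_N.
\end{equation*}

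Next, fix $f\in X_N$, set $\lambda:=c_2\bigl(\Phi(\sqrt{N})/N\bigr)^{1/p}\|f\|_{2,\mathbf{x}}$ with $c_2:=c_2(\Phi,p)$ to be chosen, and note that by \eqref{eq-ball} it suffices to show $\rho_\Phi(f/\lambda)\le 1$. Since $|f|/\lambda\le t_\ast:=A\sqrt{N}\|f\|_{2,\mathbf{x}}/\lambda$ pointwise, the assumption $\Phi\in{\rm(aInc)}_p$ gives $\Phi(|f|/\lambda)\le a_\Phi(p)|f|^p\,\Phi(t_\ast)(\lambda t_\ast)^{-p}$. Integrating and applying the elementary interpolation $\|f\|_p^p\le\|f\|_\infty^{p-2}\|f\|_2^2$ (here $p\ge 2$ is used essentially) together with the two discrete-$L^2$ bounds above, the factors of $\|f\|_{2,\mathbf{x}}$, $A$, and $\sqrt{N}$ telescope cleanly to produce the compact estimate
\begin{equation*}
\rho_\Phi(f/\lambda)\le a_\Phi(p)\,\Phi(t_\ast)/N.
\end{equation*}

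Finally, I would choose $c_2$ large enough to guarantee $t_\ast\le\sqrt{N}$, so that a second application of ${\rm(aInc)}_p$ between $t_\ast$ and $\sqrt{N}$ gives $\Phi(t_\ast)\le a_\Phi(p)(t_\ast/\sqrt{N})^p\Phi(\sqrt{N})=a_\Phi(p)A^p N/c_2^p$; plugged back this bounds $\rho_\Phi(f/\lambda)$ by $a_\Phi(p)^2 A^p/c_2^p\le 1$ as soon as $c_2\ge A\,a_\Phi(p)^{2/p}$. The main obstacle, and the sole reason for the factor $(1+\Phi(1)^{-1})^{1/p}$ in $c_2(\Phi,p)$, is arranging $t_\ast\le\sqrt{N}$: this is equivalent to $c_2(\Phi(\sqrt{N})/N)^{1/p}\ge A$, and the elementary lower bound $\Phi(\sqrt{N})\ge\Phi(1)N^{p/2}/a_\Phi(p)$ (obtained from ${\rm(aInc)}_p$ applied between $1$ and $\sqrt{N}$, and using $p\ge 2$ so that $N^{p/2}\ge N$) reduces this second condition to $c_2\ge A(a_\Phi(p)/\Phi(1))^{1/p}$. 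Both requirements are met by taking $c_2=CA\,a_\Phi(p)^{2/p}(1+\Phi(1)^{-1})^{1/p}$ for a sufficiently large numerical constant $C$, which matches the form claimed for $c_2(\Phi,p)$.
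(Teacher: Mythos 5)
Your proposal does not prove the statement in question. What you have written is, essentially, the paper's proof of Theorem \ref{T-1sided-1} (the one-sided bound $\|f\|_\Phi\le c_2(\Phi(\sqrt{N})/N)^{1/p}\|f\|_{2,{\bf x}}$, with the constant of the form $Ca_\Phi(p)^{2/p}(1+\Phi(1)^{-1})^{1/p}$), whereas the statement to be proved is Lemma \ref{lem-discr-2} itself: the purely numerical, $\Phi$-free estimate $\|f\|_\infty\le c_2\sqrt{N}\bigl(\frac1m\sum_{j=1}^m|f(z_j)|^2\bigr)^{1/2}$ with $m\in[N,c_1N]$. Worse, your very first step invokes Lemma \ref{lem-discr-2} to produce the points ${\bf z}$ with $\|f\|_\infty\le c\sqrt N\|f\|_{2,{\bf z}}$, so as an argument for that lemma it is circular: the conclusion is assumed as an ingredient, and nothing in the rest of the proposal (the choice of $\lambda$, the interpolation $\|f\|_p^p\le\|f\|_\infty^{p-2}\|f\|_2^2$, the two applications of ${\rm(aInc)}_p$) produces the sup-norm discretization.

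The route the paper takes for Lemma \ref{lem-discr-2} is different and short: it is either quoted from Theorem 12 of \cite{KPUU24}, or deduced from Lemma \ref{lem-discr-1} combined with the Kiefer--Wolfowitz theorem. Concretely, by Kiefer--Wolfowitz there is a probability measure $\mu_{KW}$ on $\Omega$ with $\|f\|_\infty\le\sqrt N\,\|f\|_{L^2(\mu_{KW})}$ for all $f\in X_N$; applying Lemma \ref{lem-discr-1} to $X_N\subset C(\Omega)$ with $\Omega$ equipped with $\mu_{KW}$ gives $m\in[N,c_1N]$ points $z_1,\dots,z_m$ with $\|f\|_{L^2(\mu_{KW})}\le c_2\bigl(\frac1m\sum_j|f(z_j)|^2\bigr)^{1/2}$, and chaining the two inequalities yields the claim with universal constants. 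If your intention was to prove Theorem \ref{T-1sided-1}, then your outline is essentially the paper's argument for that theorem and is fine; but for the lemma actually posed you need the Kiefer--Wolfowitz change of measure (or a direct citation), not the Orlicz computation.
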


We point out that Lemma
\ref{lem-discr-2}
follows from Lemma
\ref{lem-discr-1}
 taking into account the following useful result by Kiefer and Wolfowitz
(see also the discussion in \cite{KPUU24}). 

\begin{theorem}[\cite{KW}]
Let $N\ge1$. For any $N$-dimensional subspace $X_N\subset C(\Omega)$,
there exists a probability measure $\mu$ on $\Omega$ such that
$$
\|f\|_\infty \le \sqrt{N}\|f\|_{L_2(\Omega,\mu)}\quad \forall f\in X_N.
$$
\end{theorem}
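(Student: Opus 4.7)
\medskip

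\noindent\textbf{Proof plan for the Kiefer--Wolfowitz theorem.}

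The plan is to exhibit $\mu$ as the maximizer of a natural concave optimization problem (D-optimal design), and then read off the required Nikolskii-type bound from the first-order optimality condition (which produces the equivalent G-optimality).

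Fix an arbitrary linear basis $\phi_1,\dots,\phi_N$ of $X_N$, and to each probability Borel measure $\mu$ on $\Omega$ associate the Gram-type \emph{information matrix}
$$
M(\mu):=\Bigl(\int_\Omega \phi_i(x)\overline{\phi_j(x)}\,d\mu(x)\Bigr)_{i,j=1}^N.
$$
The first step is to establish that the problem
$$
\max\bigl\{\log\det M(\mu)\colon \mu\in\mathcal{P}(\Omega)\bigr\}
$$
admits a maximizer $\mu^*$ for which $M(\mu^*)$ is strictly positive definite. Existence follows from weak-$*$ compactness of $\mathcal{P}(\Omega)$ together with upper semicontinuity of $\mu\mapsto\log\det M(\mu)$ on the open set where $M$ is positive definite; finiteness of the supremum is clear because each entry of $M(\mu)$ is bounded by $\max_i\|\phi_i\|_\infty^2$. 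Non-degeneracy of the maximizer follows from the existence of points $x_1,\dots,x_N\in\Omega$ for which the evaluation functionals are linearly independent on $X_N$ (such points exist since $X_N$ is finite-dimensional and contained in $C(\Omega)$), so that the test measure $\frac{1}{N}\sum_j\delta_{x_j}$ already yields $\log\det M>-\infty$.

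The second step is to derive the first-order condition. For any fixed $x\in\Omega$ and $t\in[0,1]$, set $\mu_t:=(1-t)\mu^*+t\delta_x$, so that
$$
M(\mu_t)=(1-t)M(\mu^*)+t\,\phi(x)\phi(x)^*, \qquad \phi(x):=(\phi_1(x),\dots,\phi_N(x))^T.
$$
Using $\frac{d}{dt}\log\det A(t)=\mathrm{tr}(A(t)^{-1}A'(t))$ at $t=0$, optimality gives
$$
0\ge\frac{d}{dt}\Big|_{t=0^+}\log\det M(\mu_t)
=\phi(x)^* M(\mu^*)^{-1}\phi(x)-N
\quad\text{for every }x\in\Omega.
$$
The third step is to translate the scalar quantity $\phi(x)^* M(\mu^*)^{-1}\phi(x)$ back into something intrinsic to $X_N$ and $\mu^*$. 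Choose an $L^2(\mu^*)$-orthonormal basis $u_1,\dots,u_N$ of $X_N$; writing $u_k=\sum_j c_{kj}\phi_j$ and unraveling the change of basis, one checks the identity
$$
\phi(x)^*M(\mu^*)^{-1}\phi(x)=\sum_{k=1}^{N}|u_k(x)|^2=\sup_{\substack{f\in X_N\\ \|f\|_{L^2(\mu^*)}\le 1}}|f(x)|^2,
$$
where the last equality is the one recalled in Section~\ref{ssect2-3}. Combining with the optimality condition yields $\sum_k|u_k(x)|^2\le N$ for all $x\in\Omega$, which is exactly the claimed bound $\|f\|_\infty\le\sqrt{N}\|f\|_{L^2(\mu^*)}$ for every $f\in X_N$.

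The main subtlety to handle carefully is guaranteeing that the maximizer lies in the open stratum where $M$ is invertible, so that differentiating $\log\det$ is legitimate and the perturbation $\mu_t$ stays in the effective domain for all small $t>0$; this is arranged by the existence of the interior test measure above. Once that is in place, the rest is a short computation. (A Carathéodory-type argument also shows one may take $\mu^*$ supported on at most $\tfrac{N(N+1)}{2}+1$ points, but this is not needed for the statement as given.)
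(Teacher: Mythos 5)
The paper does not prove this statement; it is quoted directly from \cite{KW} as a known result. Your argument is correct and is essentially the classical Kiefer--Wolfowitz ``equivalence of two extremum problems'' proof from that reference: existence and nondegeneracy of a D-optimal measure $\mu^*$ (via weak-$*$ compactness and a nondegenerate discrete test measure), the first-order condition $\phi(x)^*M(\mu^*)^{-1}\phi(x)\le N$ for all $x\in\Omega$, and the identification of this quantity with $\sum_{k=1}^N|u_k(x)|^2=\sup\{|f(x)|^2\colon f\in X_N,\ \|f\|_{L^2(\mu^*)}\le 1\}$, which yields the stated Nikolskii-type bound.
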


We are now in a position  to prove Theorem \ref{T-1sided}.

{\bf Proof of Theorem \ref{T-1sided}.}
For $N\ge 1$, let $m:=m(N)$ be the number of points sufficient for the successful discretization provided by Lemmas \ref{lem-discr-1} and \ref{lem-discr-2} and
 ${\bf x}\!:=\!\{y_1, \ldots, y_{m}, z_1, \ldots, z_{m}\}$ be the union of the sets of points from these lemmas.
We note that, for every $f\in X_N$ with $\|f\|_{2, {\bf x}}\le 1$, one has
$$
\|f\|_\infty\le c_2\sqrt{N} \Bigl(\frac{1}{m}\sum_{j=1}^{m}|f(z_j)|^2\Bigr)^{1/2}\le
c_2\sqrt{N}\sqrt{2}\Bigl(\frac{1}{2m}\sum_{j=1}^{2m}|f(x_j)|^2\Bigr)^{1/2}
\le C \sqrt{N}
$$
and
$$
\|f\|_2\le c_2 \Bigl(\frac{1}{m}\sum_{j=1}^{m}|f(y_j)|^2\Bigr)^{1/2}\le
c_2\sqrt{2}\Bigl(\frac{1}{2m}\sum_{j=1}^{2m}|f(x_j)|^2\Bigr)^{1/2}\le C.
$$
For any $f\in X_N$ with $\|f\|_{2, {\bf x}}\le 1$, we have
\begin{eqnarray*}
&&\int_{\Omega}\Phi\bigl(K^{-1}C^{-1}|f(x)|\bigr)\, \mu(dx)
=\int_{\Omega}\frac{\Phi\bigl(K^{-1}C^{-1}|f(x)|\bigr)}{C^{-2}|f(x)|^2} |f(x)|^2C^{-2}\, \mu(dx)
\\
&\le& \sup\limits_{0<t\le \sqrt{N}}\frac{\Phi\bigl(K^{-1}t\bigr)}{t^2}C^{-2}\int_{\Omega} |f(x)|^2\, \mu(dx)
\le \sup\limits_{0<t\le \sqrt{N}}\frac{\Phi\bigl(K^{-1}t\bigr)}{t^2}<1.
\end{eqnarray*}
Thus,
$$
\|f\|_\Phi\le CK \|f\|_{2, {\bf x}}
\quad \forall f\in X_N.
$$
The proof is now complete.
\qed

\vspace{3mm}
For ${\bf \Phi}$-functions $\Phi \in {\rm (aInc)}_p$, $p\ge 2$,
we obtain the following more explicit version of Theorem~\ref{T-1sided}.

\begin{corollary}\label{T-1sided-1}
Let $p\in [2, \infty)$, $N\ge 1$.
Let $\Phi$ be ${\bf \Phi}$-functions such that
$\Phi \in {\rm (aInc)}_p$.
There are positive numbers $c_1$ and $c_2$, where
$c_2=c_2(\Phi, p)$ depends
 only on $\Phi$ and  $p$,
such that
for every $N$-dimensional subspace $X_N\subset C(\Omega)$,
there is a set ${\bf x}:=\{x_1, \ldots, x_m\}\subset \Omega$
of cardinality $m\le c_1N$ for which
$$
\|f\|_\Phi\le c_2\Bigl(\frac{\Phi\bigl(\sqrt{N}\bigr)}{N}\Bigr)^{1/p}\|f\|_{2, {\bf x}} \quad \forall f\in X_N.
$$
Moreover,
one can take $c_2(\Phi, p)$ to be of the form
$$
c_2(\Phi, p)=C a_\Phi(p)^{2/p}\bigl(1+ (\Phi(1))^{-1}\bigr)^{1/p}
$$
where $C\ge 1$ is a numerical constant.
\end{corollary}

\begin{proof}
Let $M= a_\Phi(p)\frac{\Phi(\sqrt{N})}{N}\cdot \frac{1+ \Phi(1)}{\Phi(1)}\ge 1$.
Then, by \eqref{eq-est-p}, for any $t\in(0, \sqrt{N}]$, one has
$$
\frac{\Phi\bigl(a_\Phi(p)^{-1/p}M^{-1/p}t\bigr)}{t^2}
\le M^{-1} \frac{\Phi\bigl(t\bigr)}{t^2} = M^{-1} \frac{\Phi\bigl(t\bigr)}{t^p} t^{p-2}
\le  M^{-1} a_\Phi(p)\frac{\Phi\bigl(\sqrt{N}\bigr)}{N}<1.
$$
Taking into account Theorem \ref{T-1sided} with $K = a_\Phi(p)^{1/p}M^{1/p}$, we obtain
$$
\|f\|_\Phi\le Ca_\Phi(p)^{1/p}M^{1/p} \|f\|_{2, {\bf x}}
\le
Ca_\Phi(p)^{2/p}\bigl(1+ (\Phi(1))^{-1}\bigr)^{1/p}\Bigl(\frac{\Phi(\sqrt{N})}{N}\Bigr)^{1/p} \|f\|_{2, {\bf x}}
\quad \forall f\in X_N.
$$
The proof is now complete.
\end{proof}

Recall that  $\displaystyle\Phi_{p, \alpha, \beta}(t)= t^p\frac{(\ln(e+t))^\alpha}{(\ln(e+t^{-1}))^\beta}$, $p\ge 1$, $\alpha, \beta\ge 0$,
cf. \eqref{eq-Phi}. Note that $\Phi \in {\rm (Inc)}_p$, $a_\Phi(p)=1$, and $\Phi(1)=1$.
In addition,
$$
\Phi_{p, \alpha, \beta}(\sqrt{N}) \le N^{p/2} (\log 4N)^{\alpha}.
$$
Thus, Corollary \ref{T-1sided-1} yields the following result.

\begin{example}\label{cor5.1}{\rm
Let 
$p\ge 2$, $\alpha, \beta\ge 0$, 
and  $N\ge1$.
There exist numerical constants $c_1, c_2>0$ satisfying the following condition:  for every $N$-dimensional space $X_N\subset C(\Omega)$,
there exist a set ${\bf x}:=\{x_1, \ldots, x_m\}\subset \Omega$ of cardinality
$
m\le c_1N
$
such that
$$
\|f\|_{\Phi_{p, \alpha, \beta}}\le c_2N^{\frac{1}{2}-\frac{1}{p}}(\log 4N)^{\frac{\alpha}{p}}
\|f\|_{2, {\bf x}} \quad \forall f\in X_N.
$$
In the case $\alpha = \beta= 0$ we recover the result of Theorem 13 in \cite{KPUU24}.
}
\end{example}

By applying Theorem \ref{T-1sided} directly, we obtain the following one-sided discretization result for exponential Orlicz functions.

\begin{example}\label{cor5.2}{\rm
Let $q\ge 2$, $\Phi_q(t):= e^{t^q} - 1$.
There are positive numbers $c_1$ and $c_2$ such that,
for every $N$-dimensional subspace $X_N\subset C(\Omega)$,
there is a set ${\bf x}:=\{x_1, \ldots, x_m\}\subset \Omega$
of cardinality $m\le c_1N$ for which
$$
\|f\|_{\Phi_q}\le c_2\frac{N^{1/2}}{(\log(N+1))^{1/q}} \|f\|_{2, {\bf x}} \quad \forall f\in X_N.
$$
}
\end{example}

\begin{proof}
Note that the function $t\mapsto \frac{\Phi_q(t)}{t^2}$ is increasing on $[0, +\infty)$
and
$$
\sup\limits_{0<t\le \sqrt{N}}\frac{\Phi_q\bigl(K^{-1}t\bigr)}{t^2} = \frac{\Phi_q\bigl(K^{-1}\sqrt{N}\bigr)}{N}
= \frac{e^{K^{-q}N^{q/2}} - 1}{N}.
$$
Letting $K = \frac{2N^{1/2}}{(\log(N+1))^{1/q}}$ implies
$
\frac{e^{K^{-q}N^{q/2}} - 1}{N} 
=\frac{(N+1)^{2^{-q}\log e} - 1}{N}<1.
$
Theorem \ref{T-1sided} now yields  the announced discretization inequality.
\end{proof}

\vspace{0.2cm}

\section{Applications to  sampling recovery}\label{sect7}
In this section, we 
 obtain new estimates of the sampling numbers in the Orlicz norm in terms of the corresponding
Kolmogorov widths in the uniform norm.
Let ${\bf F}$ be subset of some function Banach space $(L, \|\cdot\|)$.
We further always assume that ${\bf F}\subset C(\Omega)$ for some compact set $\Omega$.
The sampling numbers of a function class ${\bf F}$ are defied by 
$$
\varrho_m({\bf F}, \|\cdot\|):=
\inf_{\substack{{\bf x}\subset \Omega\\ \# {\bf x}\le m}}
\inf_{ T_{{\bf x}} - \hbox{ linear}}
\sup\limits_{f\in {\bf F}}
\|f-T_{{\bf x}}(f(x_1), \ldots, f(x_m))\|.
$$
Similarly, we introduce the modified sampling numbers of a function class ${\bf F}$ as
$$
\varrho_m^*({\bf F}, \|\cdot\|):= \inf_{\substack{{\bf x}\subset \Omega\\ \# {\bf x}\le m}}
\inf_{X_N, N\le m}\inf_{T_{{\bf x}}\colon \mathbb{C}^m\to X_N}
\sup\limits_{f\in {\bf F}}\|f-T_{{\bf x}}(f(x_1), \ldots, f(x_m))\|.
$$
Let
$$
d_N({\bf F}, \|\cdot\|)=
\inf_{X_N}\sup_{f\in{\bf F}}
\inf_{u\in X_N}\|f-u\|
$$
be the $N$-th Kolmogorov width of ${\bf F}$.
For a fixed $N$-dimensional subspace $X_N$ in $C(\Omega)$
we consider the following error of approximation of a function by elements of $X_N$
in the uniform norm:
$$
d(f, X_N)_\infty:=\inf_{u\in X_N}\|f-u\|_\infty.
$$
For a fixed number $m\in \mathbb{N}$, a set of points
${\bf x}=\{x_1, \ldots, x_m\}\subset \Omega$,
and a probability measure
$\nu=\sum_{j=1}^{m}\lambda_j\delta_{x_j}$ on ${\bf x}$,
we consider the following recovery algorithm
$$
\ell_{L^\Phi(\nu), X_N}(f):=
{\rm arg}\min\limits_{u\in X_N}\|f-u\|_{L^\Phi(\nu)}.
$$
based on functional values $(f(x_1), \ldots, f(x_m))$.

We need the following extension of Theorem 2.1 in \cite{Tem21} to the Orlicz setting.

\begin{lemma}\label{lem7.1} Let $p\ge 1$, $D\ge 1$, and let $\Phi, \Psi\in {\rm (aInc)}_p$  be a pair of ${\bf\Phi}$-functions.
Then for any
probability measure $\mu$ on $\Omega$,
any discrete probability measure $\nu$ on
a finite subset of $\Omega$, and
for any
$N$-dimensional subspace $X_N\subset C(\Omega)$, satisfying
$$
\|u\|_{L^\Phi(\mu)}\le D\|u\|_{L^\Psi(\nu)} \quad \forall u\in X_N,
$$
there holds
$$
\|f - \ell_{L^\Psi(\nu), X_N}(f)\|_{L^\Phi(\mu)}
\le C(\Phi, \Psi, p, D) d(f, X_N)_\infty,
$$
where
\begin{equation}\label{eq-const}
C(\Phi, \Psi, p, D)=C_\Phi\bigl(2a_\Phi(p)^{1/p}(\Phi(1) + 1)^{1/p} + 4DC_\Psi a_\Psi(p)^{1/p}(\Psi(1) + 1)^{1/p}\bigr)
\end{equation}
and $C_\Psi$ and $C_\Phi$ are the constants from inequality \eqref{eq-triangle}.
\end{lemma}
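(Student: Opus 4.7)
The plan is to split the error the standard way: pick $u \in X_N$ with $\|f - u\|_\infty \le 2 d(f, X_N)_\infty$ (the factor $2$ is just to avoid worrying whether the infimum is attained), write $f - \ell(f) = (f - u) + (u - \ell(f))$ with $\ell := \ell_{L^\Psi(\nu), X_N}$, and apply the quasi-triangle inequality \eqref{eq-triangle} for $L^\Phi(\mu)$. The first summand will be controlled by a pointwise-style $L^\infty \to L^\Phi$ bound, while the second will be transferred to an $L^\Psi(\nu)$-norm via the hypothesis $\|u\|_{L^\Phi(\mu)} \le D\|u\|_{L^\Psi(\nu)}$ on $X_N$ and then handled by the minimization property of $\ell$.

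The crucial auxiliary estimate is the following: for any probability measure $\sigma$ on $\Omega$ and any $h \in C(\Omega)$,
$$\|h\|_{L^\Phi(\sigma)} \le a_\Phi(p)^{1/p}(\Phi(1)+1)^{1/p}\|h\|_\infty,$$
and analogously with $\Psi$ in place of $\Phi$. To see this, set $\lambda := a_\Phi(p)^{1/p}(\Phi(1)+1)^{1/p}\|h\|_\infty$; since the coefficient is at least $1$, we have $|h|/\lambda \le 1$ pointwise, and the ${\rm (aInc)}_p$ condition applied with $t=1$ gives $\Phi(|h|/\lambda) \le a_\Phi(p)\Phi(1)(|h|/\lambda)^p$. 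Integrating and using $|h|/\lambda \le \|h\|_\infty/\lambda$ yields $\rho_\Phi(h/\lambda) \le \Phi(1)/(\Phi(1)+1) \le 1$, so the second implication in \eqref{eq-ball} forces $\|h\|_{L^\Phi(\sigma)} \le \lambda$.

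Applying this sublemma to $f - u$ against $\mu$ (with $\Phi$) and also against $\nu$ (with $\Psi$), and invoking the minimality of $\ell(f)$ together with the quasi-triangle inequality in $L^\Psi(\nu)$, I obtain
$$\|f - u\|_{L^\Phi(\mu)} \le a_\Phi(p)^{1/p}(\Phi(1)+1)^{1/p}\|f - u\|_\infty$$
and
$$\|u - \ell(f)\|_{L^\Phi(\mu)} \le D\|u - \ell(f)\|_{L^\Psi(\nu)} \le 2DC_\Psi\|f - u\|_{L^\Psi(\nu)} \le 2DC_\Psi a_\Psi(p)^{1/p}(\Psi(1)+1)^{1/p}\|f - u\|_\infty,$$
where the middle inequality uses $\|f - \ell(f)\|_{L^\Psi(\nu)} \le \|f - u\|_{L^\Psi(\nu)}$. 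Summing these two pieces against the outer factor $C_\Phi$ from the quasi-triangle inequality for $L^\Phi(\mu)$ and substituting $\|f - u\|_\infty \le 2 d(f, X_N)_\infty$ produces exactly the constant \eqref{eq-const}. Everything else is straightforward bookkeeping; the only step that demands genuine thought is isolating the auxiliary $L^\infty$-to-Orlicz comparison in the form above, which is where the factor $(\Phi(1)+1)^{1/p}$ (rather than the naive $\Phi(1)^{1/p}$) originates, arising from simultaneously enforcing $\lambda \ge \|h\|_\infty$ and $\rho_\Phi(h/\lambda) \le 1$.
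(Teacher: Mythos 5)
Your proof is correct and follows essentially the same route as the paper's: the same decomposition $f - \ell(f) = (f - u) + (u - \ell(f))$ with $u \in X_N$ a near-best $\|\cdot\|_\infty$-approximant, the same $L^\infty\!\to L^\Phi$ (and $L^\infty\!\to L^\Psi$) comparison with constant $a_\Phi(p)^{1/p}(\Phi(1)+1)^{1/p}$, the minimality of $\ell$, the hypothesis on $X_N$, and the two quasi-triangle inequalities, yielding the identical constant \eqref{eq-const}. The only superficial difference is that you rederive the $L^\infty$-to-Orlicz bound directly from the ${\rm (aInc)}_p$ property and \eqref{eq-ball}, whereas the paper cites \eqref{eq-est-p}, but these are the same estimate.
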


\begin{proof}
We follow the argument from 
 \cite{Tem21}.
Let $P_{X_N}(f)\in X_N$ be such that
$$
\|f-P_{X_N}(f)\|_\infty\le 2d(f, X_N)_\infty.
$$
Then, by \eqref{eq-est-p}, there holds
$$
\|f-P_{X_N}(f)\|_{L^\Phi(\mu)}\le
a_\Phi(p)^{1/p}(\Phi(1) + 1)^{1/p}\|f-P_{X_N}(f)\|_\infty\le
2a_\Phi(p)^{1/p}(\Phi(1) + 1)^{1/p}d(f, X_N)_\infty
$$
and, similarly,
$$
\|f-P_{X_N}(f)\|_{L^\Psi(\nu)}
\le
2a_\Psi(p)^{1/p}(\Psi(1) + 1)^{1/p}d(f, X_N)_\infty.
$$
Then it is clear that
$$
\|f-\ell_{L^\Psi(\nu), X_N}(f)\|_{L^\Psi(\nu)}
\le \|f-P_{X_N}(f)\|_{L^\Psi(\nu)}
\le 2a_\Psi(p)^{1/p}(\Psi(1) + 1)^{1/p}d(f, X_N)_\infty.
$$
Taking this into account, we get
$$
\|P_{X_N}(f)-\ell_{L^\Psi(\nu), X_N}(f)\|_{L^\Psi(\nu)}
\le 4C_\Psi a_\Psi(p)^{1/p}(\Psi(1) + 1)^{1/p}d(f, X_N)_\infty
$$
and
\begin{eqnarray*}
\|P_{X_N}(f)-\ell_{L^\Psi(\nu), X_N}(f))\|_{L^\Phi(\mu)}
&\le&
D\|P_{X_N}(f)-\ell_{L^\Psi(\nu), X_N}(f)\|_{L^\Psi(\nu)}
\\
&\le& 4DC_\Psi a_\Psi(p)^{1/p}(\Psi(1) + 1)^{1/p}d(f, X_N)_\infty.
\end{eqnarray*}
Thus  we arrive at
\begin{eqnarray*}
\|f - \ell_{L^\Psi(\nu), X_N}(f)\|_{L^\Phi(\mu)}
&\le& C_\Phi\bigl(\|f-P_{X_N}(f)\|_{L^\Phi(\mu)} + \|P_{X_N}(f)-\ell_{L^\Psi(\nu), X_N}(f))\|_{L^\Phi(\mu)}\bigr)
\\
&\le&
C(\Phi, \Psi, p, D) d(f, X_N)_\infty,
\end{eqnarray*}
where $C(\Phi, \Psi, p, D)$ is given by \eqref{eq-const}.
\end{proof}

We now prove the Orlicz counterparts of the recent result in \cite{KPUU24} (see  Theorem 20 there).

\begin{theorem}\label{t7.3}
Let $\Phi\in {\bf \Phi}_w$.
Assume that, for every $N\ge 1$, there is a number $K:=K(N, \Phi)>0$ such that
$$
\sup\limits_{0<t\le \sqrt{N}}\frac{\Phi\bigl(K^{-1}t\bigr)}{t^2} <1.
$$
Then there is a positive number $c\ge 1$ such that,
for any probability Borel measure $\mu$ on $\Omega$ and
for any function class ${\bf F}\subset C(\Omega)$,
one has
$$
\varrho_{cN}({\bf F}, \|\cdot\|_{L^\Phi(\mu)})
\le C(N, \Phi)d_N({\bf F}, \|\cdot\|_\infty),
$$
where
$$
C(N, \Phi)=C_\Phi\bigl(2a_\Phi(1)(\Phi(1)+1) + C K(N, \Phi)\bigr),
$$
$C>0$ is a numerical constant and $C_\Phi$ is the constant from inequality \eqref{eq-triangle}.
\end{theorem}

\begin{proof}
Let $X_N$ be such that
$$
\sup_{f\in{\bf F}}d(f, X_N)_\infty
\le
2d_N({\bf F}, \|\cdot\|_\infty).
$$
Take  $c_1$, $c_2$, and
${\bf x}:=\{x_1, \ldots, x_m\}
\subset \Omega$  provided by Theorem \ref{T-1sided} (in particular, $m \le c_1N$).
By Lemma \ref{lem7.1}, for every $f\in{\bf F}$, we have
$$
\|f - \ell_{L^2(\nu), X_N}(f)\|_{L^\Phi(\mu)}
\le   C_\Phi\bigl(2a_\Phi(1)(\Phi(1) + 1) + 8c_2 K(N, \Phi)\bigr) d(f, X_N)_\infty.
$$
It remains to notice that $\ell_{L^2(\nu), X_N}(f)$ is linear in
$(f(x_1), \ldots, f(x_m))$ since it is an orthogonal projection
on $X_N$ in $L^2(\nu)$.
\end{proof}

\begin{example}\label{ex7.3}
{\rm
Let $q\ge 2$, $\Phi_q(t):= e^{t^q} - 1$.
There are numerical constants $c, C\ge1$ such that,
for any probability Borel measure $\mu$ on $\Omega$ and
for any function class ${\bf F}\subset C(\Omega)$,
one has
$$
\varrho_{cN}({\bf F}, \|\cdot\|_{L^{\Phi_q}(\mu)})
\le \frac{CN^{1/2}}{(\log(N+1))^{1/q}}d_N({\bf F}, \|\cdot\|_\infty).
$$
}
\end{example}

We now present a more explicit version of Theorem~\ref{t7.3}
for ${\bf \Phi}$-functions $\Phi \in {\rm (aInc)}_p$, $p\ge 2$.

\begin{theorem}\label{t7.1}
Let $p\in [2, \infty)$, $N\ge 1$.
Let $\Phi$ be a ${\bf \Phi}$-function such that
$\Phi \in {\rm (aInc)}_p$.
There exist a positive numerical constant $c\ge1$ and a number $C(\Phi, p)\ge 1$,
depending only on $\Phi$ and $p$, such that,
for any probability Borel measure $\mu$ on $\Omega$ and
for any function class ${\bf F}\subset C(\Omega)$,
one has
$$
\varrho_{cN}({\bf F}, \|\cdot\|_{L^\Phi(\mu)})
\le C(\Phi, p)\Bigl(\frac{\Phi\bigl(\sqrt{N}\bigr)}{N}\Bigr)^{1/p}d_N({\bf F}, \|\cdot\|_\infty).
$$
Moreover,
one can take $C(\Phi, p)$ to be of the form
$$
C(\Phi, p)=C\cdot C_\Phi a_\Phi(p)^{2/p}\bigl(1+ (\Phi(1))^{-1}\bigr)^{1/p},
$$
where $C\ge 1$ is a numerical constant and $C_\Phi$ is the constant from inequality \eqref{eq-triangle}.
\end{theorem}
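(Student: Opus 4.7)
The plan is to combine Theorem \ref{T-1sided-1} with a standard linear least-squares recovery, along the lines of the proof of Lemma \ref{lem7.1} specialised to the discrete $L^2$ test norm. Working with $L^2$ (rather than a general Orlicz norm) on the sampling side makes the recovery operator automatically linear, while Theorem \ref{T-1sided-1} provides the one-sided bound needed to translate the discrete $L^2$ error back to the target $L^\Phi(\mu)$ norm.

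Given $\eta>0$, I would pick an $N$-dimensional subspace $X_N\subset C(\Omega)$ with $\sup_{f\in{\bf F}} d(f,X_N)_\infty \le (1+\eta)\,d_N({\bf F},\|\cdot\|_\infty)$. Applying Theorem \ref{T-1sided-1} to this $X_N$ supplies a set ${\bf x}=\{x_1,\ldots,x_m\}$ of cardinality $m\le c_1 N$ together with the uniform measure $\nu=m^{-1}\sum_{j=1}^m\delta_{x_j}$ satisfying
$$
\|u\|_{L^\Phi(\mu)}\le c_2(\Phi,p)\Bigl(\tfrac{\Phi(\sqrt N)}{N}\Bigr)^{1/p}\|u\|_{2,{\bf x}},\qquad u\in X_N.
$$
As the recovery operator I take $T_{\bf x}(f):=\ell_{L^2(\nu),X_N}(f)$, the orthogonal projection onto $X_N$ in $L^2(\nu)$; the discretization above forces $\|\cdot\|_{2,{\bf x}}$ to be a genuine norm on $X_N$, so the associated Gram matrix is invertible and $T_{\bf x}$ is a well-defined linear function of the sample vector $(f(x_1),\ldots,f(x_m))$.

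For each $f\in C(\Omega)$, pick a near-best approximant $P(f)\in X_N$ with $\|f-P(f)\|_\infty\le 2d(f,X_N)_\infty$ and split $\|f-T_{\bf x}(f)\|_{L^\Phi(\mu)}$ via \eqref{eq-triangle} into $C_\Phi\|f-P(f)\|_{L^\Phi(\mu)}+C_\Phi\|T_{\bf x}(f)-P(f)\|_{L^\Phi(\mu)}$. The first summand is controlled by the direct inequality $\|g\|_{L^\Phi(\mu)}\le a_\Phi(p)^{1/p}(\Phi(1)+1)^{1/p}\|g\|_\infty$, which follows in one line from \eqref{eq-est-p} applied with $\lambda=\Phi(1)+1$ and $t=1$. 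For the second, applying the one-sided discretization to $T_{\bf x}(f)-P(f)\in X_N$ and then using the $L^2(\nu)$-minimality of $T_{\bf x}(f)$ yields
$\|T_{\bf x}(f)-P(f)\|_{2,{\bf x}}\le 2\|f-P(f)\|_{2,{\bf x}}\le 2\|f-P(f)\|_\infty$,
so this second summand is bounded by $2C_\Phi c_2(\Phi,p)(\Phi(\sqrt N)/N)^{1/p}\|f-P(f)\|_\infty$.

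The remaining (and main) step is constant tracking. The ${\rm (aInc)}_p$ property gives $\Phi(\sqrt N)/N\ge \Phi(1)N^{p/2-1}/a_\Phi(p)\ge \Phi(1)/a_\Phi(p)$ for $p\ge 2$ and $N\ge 1$, and combined with the identity $(1+\Phi(1)^{-1})\Phi(1)=\Phi(1)+1$ and the explicit form of $c_2(\Phi,p)=C a_\Phi(p)^{2/p}(1+\Phi(1)^{-1})^{1/p}$ this forces the bias prefactor $a_\Phi(p)^{1/p}(\Phi(1)+1)^{1/p}$ to be dominated, up to a universal factor, by $c_2(\Phi,p)(\Phi(\sqrt N)/N)^{1/p}$. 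Both contributions therefore collapse into a single prefactor of the form $C\cdot C_\Phi\,a_\Phi(p)^{2/p}(1+\Phi(1)^{-1})^{1/p}(\Phi(\sqrt N)/N)^{1/p}$. Taking the supremum over $f\in{\bf F}$ and letting $\eta\to 0$ would yield the claim with $c=c_1$; the only real obstacle is this bookkeeping, which is routine once one uses the explicit lower bound on $(\Phi(\sqrt N)/N)^{1/p}$ coming from ${\rm (aInc)}_p$.
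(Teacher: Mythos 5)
Your proof is correct and follows essentially the same route as the paper: choose a near-optimal $X_N$, invoke Theorem \ref{T-1sided-1} to get the one-sided $L^\Phi(\mu)\hookrightarrow L^2(\nu)$ bound on a set of $O(N)$ points, use least-squares in $L^2(\nu)$ as the (automatically linear) recovery operator, and split the error via a near-best uniform approximant before absorbing the bias term into the discretization term using $\Phi(\sqrt N)/N\ge\Phi(1)/a_\Phi(p)$. The only cosmetic difference is that the paper packages the triangle-inequality step as Lemma \ref{lem7.1} (applied with $\Psi(t)=t^2$), whereas you inline it; the constant bookkeeping is the same.
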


\begin{proof}
We argue as in the proof of Theorem \ref{t7.3}.
Let $X_N$ be such that
$$
\sup_{f\in{\bf F}}d(f, X_N)_\infty
\le
2d_N({\bf F}, \|\cdot\|_\infty).
$$
Let $c_1, c_2:=c_2(\Phi, p)>0$, $m \le c_1N$, and
${\bf x}\subset \Omega$ be provided by Corollary \ref{T-1sided-1}.
Recall that one can take
$$
c_2(\Phi, p)=c_0 a_\Phi(p)^{2/p}\bigl(1+ (\Phi(1))^{-1}\bigr)^{1/p},
$$
where $c_0\ge 1$ is a numerical constant.
Using Lemma \ref{lem7.1}, for every $f\in{\bf F}$ we have
\begin{eqnarray*}
\|f - \ell_{L^2(\nu), X_N}(f)\|_{L^\Phi(\mu)}\!\!\!\!
&\le&  \!\!\! C_\Phi\Bigl(2a_\Phi(p)^{1/p}(\Phi(1) + 1)^{1/p} + 8 c_2\Bigl(\frac{\Phi\bigl(\sqrt{N}\bigr)}{N}\Bigr)^{1/p}\Bigr) d(f, X_N)_\infty
\\
&\le&\!\!\!
2C_\Phi\Bigl(2(a_\Phi(p))^{2/p}\bigl(1+ (\Phi(1))^{-1}\bigr)^{1/p} + 8c_2\Bigr) \Bigl(\frac{\Phi\bigl(\sqrt{N}\bigr)}{N}\Bigr)^{1/p}d_N({\bf F}, \|\cdot\|_\infty).
\end{eqnarray*}
Finally, we again notice that $\ell_{L^2(\nu), X_N}(f)$ is linear in
$(f(x_1), \ldots, f(x_m))$ since it is an orthogonal projection
on $X_N$ in $L^2(\nu)$.
\end{proof}

\begin{example}\label{ex7.1}{\rm
Let $p\ge2$, $\alpha, \beta\ge0$, and $\Phi_{p, \alpha, \beta}(t) := t^p\frac{(\ln(e+t))^\alpha}{(\ln(e+t^{-1}))^\beta}$.
There exist positive numerical constants $c, C\ge1$ such that,
for any probability Borel measure $\mu$ on $\Omega$ and
for any function class ${\bf F}\subset C(\Omega)$,
one has
$$
\varrho_{cN}({\bf F}, \|\cdot\|_{L^{\Phi_{p, \alpha, \beta}}(\mu)})
\le CN^{\frac{1}{2} - \frac{1}{p}}(\log 4N)^{\alpha/p}d_N({\bf F}, \|\cdot\|_\infty).
$$
In the case $\alpha = \beta= 0$ we recover the result of Theorem 20 in \cite{KPUU24}.
}
\end{example}

Let us emphasise that for the Orlicz norm generated by $\Phi_{2, \alpha, \beta}$
this estimate involves only the logarithmic factor $(\log 4N)^{\alpha/2}$
unlike in the case of the norms generated by $\Phi_{p, \alpha, \beta}$ with $p>2$, where an additional polynomial factor appears.
We can avoid such factor for {\it modified} sampling numbers
allowing certain polynomial oversampling (see Example \ref{ex7.2} below).
In more detail, first, combining Lemma \ref{lem7.1}, Theorem \ref{t5.1}, and Remarks \ref{rem5.1} and \ref{rem5.2},
we obtain the following result.

\begin{theorem}\label{t7.2}
Let $p\in (1, \infty)$.
Let also $\Phi$ be a ${\bf \Phi}$-function,
continuously differentiable on $(0, +\infty)$,
such that
$\Phi \in {\rm (Inc)}_p\cap{\rm (Dec)}$
and
$$
\sup\limits_{t>0}\bigr(\Phi(t)\Phi(t^{-1})\bigl)<\infty.
$$
There exist positive numbers $c:=c(\Phi, p)\ge1$ and $C:=C(\Phi, p)\ge 1$,
depending only on $\Phi$ and $p$, such that,
for any probability Borel measure $\mu$ on $\Omega$,
for any function class ${\bf F}\subset C(\Omega)$, and
for any
$$
m\ge c\Psi_\Phi\bigl(N^\frac{1}{\min\{p,2\}}\bigr)(\log 2N)^3,
$$
we have
$$
\varrho_{m}^*({\bf F}, \|\cdot\|_{L^\Phi(\mu)})
\le C\bigl(M_{\Phi, \Psi_\Phi}(m)\bigr)^{1/p}d_N({\bf F}, \|\cdot\|_\infty),
$$
where
$$
\Psi_\Phi(s):= \sup_{t>0}\frac{\Phi(ts)}{\Phi(t)}
$$
and
$$
M_{\Phi, \Psi_\Phi}(m):=
\max\Bigl\{1, \max\Bigl\{\frac{\Psi_\Phi(t)}{\Phi(t)}\colon t\in[\Phi^{-1}(1), \Phi^{-1}(m)]\Bigr\}\Bigr\}
\quad \forall m\in(0, +\infty).
$$
\end{theorem}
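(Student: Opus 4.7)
The plan is to follow the template of the proof of Theorem~\ref{t7.1}, replacing the use of Theorem~\ref{T-1sided-1} (which produced the one-sided bound in the discrete $L^2$ norm) with Theorem~\ref{t5.1} specialised to $\Psi:=\Psi_\Phi$ (which produces the analogous bound in the discrete $L^{\Psi_\Phi}$ norm). First I would fix $N\ge 1$ and choose an $N$-dimensional subspace $X_N\subset C(\Omega)$ with
$$
\sup_{f\in{\bf F}} d(f,X_N)_\infty\le 2\, d_N({\bf F},\|\cdot\|_\infty).
$$

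Next I would verify that $\Psi_\Phi$ satisfies the hypotheses of Theorem~\ref{t5.1} and Remark~\ref{rem5.1}: the class property $\Psi_\Phi\in{\rm (aInc)}_p$ follows from Remark~\ref{rem5.2}, and $\Psi_\Phi\in{\rm (Dec)}$ is obtained directly from $\Phi\in{\rm (Dec)}_q$ by taking the supremum in $t$ of the pointwise inequality $\Phi(ts_1)/s_1^q\ge \Phi(ts_2)/s_2^q$ valid for $s_1<s_2$. With this choice $K_{\Phi,\Psi_\Phi}=1$, so Theorem~\ref{t5.1} together with Remark~\ref{rem5.1} supplies, for every $m\ge c(\Phi,p)\Psi_\Phi\bigl(N^{1/\min\{p,2\}}\bigr)(\log 2N)^3$, a set ${\bf x}=\{x_1,\dots,x_m\}\subset\Omega$ and the uniform measure $\nu=m^{-1}\sum_{j=1}^m\delta_{x_j}$ with
$$
\|u\|_{L^\Phi(\mu)}\le D\,\|u\|_{L^{\Psi_\Phi}(\nu)}\quad \forall u\in X_N,\qquad D:=C(\Phi,p)\bigl(M_{\Phi,\Psi_\Phi}(m)\bigr)^{1/p}.
$$

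Now I would plug this one-sided inequality into Lemma~\ref{lem7.1} applied to the pair $(\Phi,\Psi_\Phi)$ and the (generally nonlinear) recovery $\ell_{L^{\Psi_\Phi}(\nu),X_N}$. Since the constant in~\eqref{eq-const} has the form $c_1+c_2 D$ with $D\ge 1$, this yields
$$
\|f-\ell_{L^{\Psi_\Phi}(\nu),X_N}(f)\|_{L^\Phi(\mu)}\le C'(\Phi,p)\,D\, d(f,X_N)_\infty\quad \forall f\in C(\Omega).
$$
Taking the supremum over $f\in{\bf F}$ and combining with the choice of $X_N$ delivers the claimed bound. The operator $\ell_{L^{\Psi_\Phi}(\nu),X_N}$ depends on $f$ only through the sample $(f(x_1),\dots,f(x_m))$ and maps into $X_N$, so it is admissible for $\varrho_m^*$ as long as $N\le m$; this final requirement is automatic for $c$ chosen sufficiently large, because $\Psi_\Phi\in{\rm (aInc)}_p$ forces $\Psi_\Phi\bigl(N^{1/\min\{p,2\}}\bigr)\gtrsim N^{p/\min\{p,2\}}\ge N$.

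The main delicate point is the verification that $\Psi_\Phi\in{\rm (Dec)}$ (rather than only ${\rm (aDec)}$), which is needed to apply Theorem~\ref{t5.1} literally; as noted this is immediate from the supremum definition. Beyond that, the argument is a routine combination of the discretization result with the abstract recovery bound of Lemma~\ref{lem7.1}, with the shape of the constant $D$ producing exactly the factor $\bigl(M_{\Phi,\Psi_\Phi}(m)\bigr)^{1/p}$ appearing in the statement.
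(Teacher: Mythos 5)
Your proposal is correct and is exactly the route the paper indicates: the text immediately preceding Theorem~\ref{t7.2} says it follows "combining Lemma~\ref{lem7.1}, Theorem~\ref{t5.1}, and Remarks~\ref{rem5.1} and \ref{rem5.2}," and your write-up carries out precisely that combination, applying Theorem~\ref{t5.1} with $\Psi=\Psi_\Phi$ (so $K_{\Phi,\Psi_\Phi}=1$), using Remark~\ref{rem5.1} to pass to equal weights and to drop the hypothesis $1\in X_N$, and feeding the resulting one-sided inequality into Lemma~\ref{lem7.1}. Your explicit verification that $\Psi_\Phi\in{\rm(Dec)}_q$ (a supremum of functions each decreasing after division by $s^q$ is again decreasing) is a useful spelling-out of a detail the paper leaves implicit — although note it is already implied by Remark~\ref{rem5.2}'s claim $b_{\Psi_\Phi}(q)=b_\Phi(q)$, since $\Phi\in{\rm(Dec)}_q$ gives $b_\Phi(q)=1$ — and the closing remark that $\Psi_\Phi(N^{1/\min\{p,2\}})\gtrsim N$ (so $N\le m$ is automatic) correctly disposes of the admissibility constraint in the definition of $\varrho_m^*$.
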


Second, we apply this theorem to the ${\bf \Phi}$-function $\Phi_{p, \alpha, \alpha}$ (see Example \ref{ex5.1}).

\begin{example}\label{ex7.2}
{\rm
Let $p\ge2$, $\alpha\ge0$, and $\Phi_{p, \alpha, \alpha}(t) := t^p\frac{(\ln(e+t))^\alpha}{(\ln(e+t^{-1}))^\alpha}$.
There exist positive constants $c:=c(p, \alpha), C:=C(p, \alpha)\ge1$, depending only on $p$ and $\alpha$, such that,
for any probability Borel measure $\mu$ on $\Omega$ and
for any function class ${\bf F}\subset C(\Omega)$,
we have
$$
\varrho_{cN^{p/2} (\log 2N)^{3+2\alpha}}^*({\bf F}, \|\cdot\|_{L^{\Phi_{p, \alpha, \alpha}}(\mu)})
\le C(\log 2N)^{\alpha/p}d_N({\bf F}, \|\cdot\|_\infty).
$$
}
\end{example}
For similar recovery results in $L^p$ (the case $\alpha = 0$), see Section {\bf R.3} in \cite{KKLT}.


\section*{Acknowledgements}

The authors would like to thank Mario Ullrich for sharing  the ideas that helped us to improve the original results in Theorem \ref{T-1sided}.

The first named author was supported by the Marie Sklodowska-Curie grant 101109701.
The second named author
was supported by PID2023-150984NB-I00, 2021 SGR 00087, AP 23488596.
The research was also supported by the Spanish State Research Agency, through the Severo Ochoa and Mar\'ia de Maeztu Program for Centers and
Units of Excellence in R\&D (CEX2020-001084-M). The authors thanks CERCA Programme (Generalitat de Catalunya) for institutional support.
The authors also would like to thank the Isaac Newton Institute for Mathematical Sciences, Cambridge,
for support and hospitality during the programme Discretization and recovery in high-dimensional spaces,
where work on this paper was partially undertaken. This work was supported by EPSRC grant EP/R014604/1.


\end{document}